\newcommand{\beq}{\begin{equation}}
\newcommand{\eeq}{\end{equation}}
\newcommand{\ben}{\begin{eqnarray}}
\newcommand{\een}{\end{eqnarray}}
\newcommand{\beno}{\begin{eqnarray*}}
\newcommand{\eeno}{\end{eqnarray*}}
\theoremstyle{plain}
\newtheorem{theorem}{Theorem}[section]
\newtheorem{corollary}[theorem]{Corollary}
\newtheorem{proposition}[theorem]{Proposition}
\newtheorem{lemma}[theorem]{Lemma}
\newtheorem{remark}[theorem]{Remark}
\numberwithin{theorem}{section} \numberwithin{equation}{section}
\newcommand{\average}{{\mathchoice {\kern1ex\vcenter{\hrule height.4pt
width 6pt depth0pt} \kern-9.7pt} {\kern1ex\vcenter{\hrule height.4pt
width 4.3pt depth0pt} \kern-7pt} {} {} }}
\def\R{\mathbb{R}}
\newcommand{\del}{\partial }
\renewcommand{\phi}{\varphi}
\newcommand{\be}{\begin{equation}}
\newcommand{\ee}{\end{equation}}
\newcommand{\N}{\mathbb{N}}
\newcommand{\cM}{{\mathcal M}}
\newcommand{\cN}{{\mathcal N}}
\newcommand{\supp}{{\rm supp}}
\newcommand{\weakto}{\rightharpoonup}
\newcommand{\weak}{\rightharpoonup}
\newcommand{\embed}{\hookrightarrow}
\newcommand{\eps}{\varepsilon}
\renewcommand{\epsilon}{\varepsilon}
\title{Spiraling solutions of nonlinear Schr\"odinger equations}
\author[Oscar Agudelo]
{Oscar Agudelo}
\address{NTIS - New Technologies for the Information Society,
Faculty of Applied Sciences,
Technicka 8,
Pilsen, 301 00,
Czech Republic}
\email{oiagudel@ntis.zcu.cz}
\author[Joel K\"ubler]
{Joel K\"ubler}
\address{Institut f\"ur Mathematik,
Goethe-Universit\"at Frankfurt,
Robert-Mayer-Str. 10,
D-60629 Frankfurt am Main, Germany}
\email{kuebler@math.uni-frankfurt.de}
\author[Tobias Weth]
{Tobias Weth}
\address{Institut f\"ur Mathematik,
Goethe-Universit\"at Frankfurt,
Robert-Mayer-Str. 10,
D-60629 Frankfurt am Main, Germany}
\email{weth@math.uni-frankfurt.de}
\begin{document}

\maketitle

\begin{abstract}
  We study a new family of sign-changing solutions to the stationary nonlinear Schr{\"o}dinger equation
$$
-\Delta v +q v =|v|^{p-2} v, \qquad \text{in $\R^3$,}
$$
with $2<p<\infty$ and $q \ge 0$. These solutions are spiraling in the sense that they are not axially symmetric but invariant under screw motion, i.e., they share the symmetry properties of a helicoid. In addition to existence results, we provide
information on the shape of spiraling solutions, which depends on the parameter value representing the rotational slope of the underlying screw motion. Our results complement
a related analysis of Del Pino, Musso and Pacard in \cite{del Pino-Musso-Pacard} for the Allen-Cahn equation, whereas the nature of results and the underlying variational structure are completely different.   
\end{abstract}

\section{Introduction}
The present paper is concerned with a new class of solutions to the stationary nonlinear Schr{\"o}dinger equation 
\begin{equation}
  \label{eq:main-eq-R-N}
-\Delta v +q v =|v|^{p-2} v \qquad \text{in $\R^N$,}  
\end{equation}
where $p>2$ and $q \ge 0$ is a constant. Since the case $q>0$ is equivalent to $q=1$ by rescaling, we only consider the cases $q=1$ and $q=0$ in the following.

For subcritical exponents $p$ (i.e., $p< \frac{2N}{N-2}$ if $N \ge 3$) and $q=1$, there is a vast literature on solutions of (\ref{eq:main-eq-R-N}) in $H^1(\R^N)$,  which decay expontially at infinity, see e.g. the monographs \cite{ambrosetti-malchiodi-a,kuzin-pohozaev,Struwe,sulem-sulem,Willem} and the references therein. In the present paper, we focus on solutions with only partial decay. These solutions are less understood, but have attracted considerable attention in recent years. 

To be more precise, let us write $\bar x=(x,t) \in \R^N$ with $x \in \R^{N-1}$ and $t\in \R$. We shall consider solutions $v: \R^N \to \R$ satisfying 
\begin{equation}
  \label{eq:decay-property}
\lim \limits_{|x| \to \infty}v(x,t) =0 \quad \text{uniformly in } t.  
\end{equation}
A trivial class of solutions satisfying \eqref{eq:decay-property} is the class of solutions that are axially symmetric with 
respect to the axis  $\{(0_{\R^{N-1}},t): \ t \in \R \} \subset \R^N$ 
and that in addition are $t$-invariant, i.e., solutions having the form $v(x,t)= \tilde v(x)$, where $\tilde v$ is a radial solution of (\ref{eq:main-eq-R-N}) in $\R^{N-1}$ satisfying $\tilde v(x) \to 0$ as $|x| \to \infty$. Here and in the following, axial symmetry is always understood with respect to the $t$-axis.

In a seminal paper, Dancer \cite{dancer} constructed, for $q=1$, nontrivial, $t$-periodic axially symmetric solutions of (\ref{eq:main-eq-R-N}) by means of bifurcation theory. The solutions found in \cite{dancer} are positive, and they bifurcate from the unique family of $t$-invariant axially symmetric positive solutions of (\ref{eq:main-eq-R-N}). 

It is natural to ask whether, for a given {\em positive} solution of (\ref{eq:main-eq-R-N}), the decay property (\ref{eq:decay-property}) enforces axial symmetry up to translations. As shown in the following theorem by Farina, Malchiodi and Rizzi in \cite{farina-malchiodi-rizzi}, this is true for positive solutions which are periodic in $t$.  
\begin{theorem} \cite[Special case of Theorem 2]{farina-malchiodi-rizzi}\\ 
\label{symmetry of positive solutions}
Let $p>2$, $q=1$, and let $v \in C^2(\R^N)$ be a bounded positive solution of (\ref{eq:main-eq-R-N}) satisfying the uniform decay property (\ref{eq:decay-property}). Suppose moreover that $v$ is periodic in $t$, i.e., there exists $\tau >0$ with 
$$
v(x,t+ \tau)= v(x,t) \qquad \text{for all $(x,t) \in \R^N$.}
$$ 
Then, up to translations in the $x$-variable, $v$ is axially symmetric.
\end{theorem}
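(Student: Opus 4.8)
\medskip
\noindent\textbf{Proof proposal.}

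The plan is to show that, up to a translation in $x\in\R^{N-1}$, $v$ is radially symmetric in $x$ — which is exactly axial symmetry with respect to the $t$-axis — by running the method of moving planes in the $x$-directions. The decisive structural point is that $v$ need not decay in $t$, but it is periodic; so one first passes to the quotient $\overline M:=\R^{N-1}\times(\R/\tau\Z)$, on which $v$ induces a bounded positive classical solution $\overline v$ of $-\Delta \overline v+\overline v=\overline v^{\,p-1}$ with $\overline v(x,\cdot)\to 0$ uniformly as $|x|\to\infty$. By elliptic regularity and interior estimates, $\overline v$ is smooth with $\overline v$ and its first and second derivatives bounded, and $\nabla\overline v\to 0$ as $|x|\to\infty$. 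Now the only non-compact directions are those of $\R^{N-1}$, and there we have uniform decay.

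Fix a direction $e\in\R^{N-1}$, $|e|=1$; after a rotation assume $e=e_1$, and write $x=(x_1,x')$. For $\lambda\in\R$ set $\Sigma_\lambda:=\{x_1>\lambda\}\times(\R/\tau\Z)\subset\overline M$, let $x^\lambda:=(2\lambda-x_1,x')$, and put $v_\lambda(x,t):=\overline v(x^\lambda,t)$ and $w_\lambda:=v_\lambda-\overline v$ on $\Sigma_\lambda$. Then $w_\lambda=0$ on $\partial\Sigma_\lambda$, and $w_\lambda$ solves $-\Delta w_\lambda+(1-c_\lambda)w_\lambda=0$ with $c_\lambda:=(p-1)\xi_\lambda^{\,p-2}$ for some $\xi_\lambda$ between $\overline v$ and $v_\lambda$, so $0\le c_\lambda\le(p-1)\max\{\overline v,v_\lambda\}^{p-2}$. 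One checks directly that for $x^{(n)}\in\Sigma_\lambda$ with $|x^{(n)}|\to\infty$ one has $|x^{(n),\lambda}|\to\infty$ as well (if $|x^{(n),\lambda}|\le C$ then $|x'^{(n)}|\le C$ and $|2\lambda-x_1^{(n)}|\le C$, forcing $|x^{(n)}|\le 2|\lambda|+2C$); hence both $\overline v\to0$ and $v_\lambda\to0$, so $w_\lambda\to0$, as $|x|\to\infty$ in $\Sigma_\lambda$. Now fix $R>0$ with $(p-1)\overline v^{\,p-2}<\tfrac12$ whenever $|x|\ge R$. For $\lambda\ge R$, on the set $\{w_\lambda<0\}$ we have $v_\lambda<\overline v$, hence $c_\lambda\le(p-1)\overline v^{\,p-2}<\tfrac12$ there; since $w_\lambda=0$ on the boundary of $\{w_\lambda<0\}$ and $w_\lambda\to0$ at infinity, a negative interior infimum would contradict $-\Delta w_\lambda+(1-c_\lambda)w_\lambda=0$ with $1-c_\lambda>\tfrac12>0$. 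Thus $w_\lambda\ge0$ in $\Sigma_\lambda$ for all $\lambda\ge R$, and the process can be started.

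Define $\lambda^*:=\inf\{\mu:\ w_\lambda\ge0\text{ in }\Sigma_\lambda\text{ for all }\lambda\ge\mu\}$. Positivity together with $\overline v\to0$ excludes $\lambda^*=-\infty$ (otherwise $\overline v$ would be nondecreasing in $x_1$ on all of $\R^{N-1}$, contradicting the decay), so $\lambda^*\in\R$ and, by continuity, $w_{\lambda^*}\ge0$. If $w_{\lambda^*}\not\equiv0$, the strong maximum principle gives $w_{\lambda^*}>0$ in the interior of $\Sigma_{\lambda^*}$, and Hopf's lemma (the interior ball condition holds along $\partial\Sigma_{\lambda^*}$) yields $\partial_1\overline v>0$ on $\{x_1=\lambda^*\}$. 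A by-now standard two-part argument — combining the maximum principle on $\{|x|\ge R\}\cap\Sigma_\lambda$ (where $1-c_\lambda\ge\tfrac12$) with the strict positivity just obtained on the bounded-in-$x$ remainder — then shows $w_\lambda\ge0$ in $\Sigma_\lambda$ for $\lambda$ slightly below $\lambda^*$, contradicting the definition of $\lambda^*$. Hence $w_{\lambda^*}\equiv0$, i.e.\ $\overline v$ is symmetric under reflection across $\{x_1=\lambda^*(e)\}$. Performing this for every unit vector $e\in\R^{N-1}$ and invoking the elementary fact that a decaying function on $\R^{N-1}$ reflection-symmetric across a hyperplane in every direction is radial about a single point (intersect the symmetry hyperplanes of $N-1$ orthogonal directions to locate $x_0$, then use symmetry in all directions through $x_0$), we get $\overline v(x,t)=V(|x-x_0|,t)$ for some $x_0\in\R^{N-1}$; undoing the quotient and translating $x\mapsto x+x_0$ gives the asserted axial symmetry.

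The main obstacle is exactly the lack of decay in $t$: the moving-plane machinery — in particular the maximum principle and Hopf lemma used above — must be carried out on the product domain $\Sigma_\lambda=\{x_1>\lambda\}\times(\R/\tau\Z)$, which is non-compact in the $x$-directions. Periodicity is what makes this work, by compactifying the $t$-variable so that the remaining non-compactness is precisely matched by the uniform decay hypothesis \eqref{eq:decay-property}; checking carefully that reflection preserves this decay, and that the maximum principle applies to the operator $-\Delta+(1-c_\lambda)$ whose zeroth-order coefficient is bounded below only far out in $x$, are the points demanding the most care.
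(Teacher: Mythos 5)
The paper does not prove this theorem at all: it is imported verbatim as a special case of Theorem 2 of Farina--Malchiodi--Rizzi \cite{farina-malchiodi-rizzi}, whose argument is precisely the moving-plane scheme on the quotient cylinder $\R^{N-1}\times(\R/\tau\Z)$ that you sketch, so your proposal reconstructs essentially the same proof as the cited source. Your sketch is sound at the level of detail given; the one step you should not leave as ``standard'' in a full write-up is the continuation past $\lambda^*$, where the bounded-in-$x$ remainder contains a thin slab abutting $T_{\lambda^*}$ on which $w_{\lambda^*}$ vanishes, so strict positivity is not uniform there and you must invoke the narrow-domain (or small-measure) maximum principle, or exploit the Hopf-derivative information you obtained on $T_{\lambda^*}$, to close that region.
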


Let us also briefly discuss the case $q=0$ in (\ref{eq:main-eq-R-N}). In this case, for subcritical $p$, it is known that (\ref{eq:main-eq-R-N}) does not admit positive solutions (see \cite[Theorem 1.1]{gidas-spruck}), and it also does not admit solutions of any sign in $H^1(\R^N)$ (by Pohozaev's identity, see e.g. \cite[Appendix B]{Willem}). The latter property is related to the fact that, in this case, equation (\ref{eq:main-eq-R-N}) remains invariant under the rescaling transformation $v \mapsto \kappa^{\frac{2}{p-2}}v(\kappa\,\cdot\,)$. 
\medskip

In the present paper, we discuss solutions of (\ref{eq:main-eq-R-N}) - (\ref{eq:decay-property}) with periodicity in $t$, but without axial symmetry. By Theorem~\ref{symmetry of positive solutions} and the remarks above, such solutions have to change sign.
As far as we know, solutions of this type have not been studied yet with the exception of the trivial $t$-independent case where $v(x,t)= \tilde v(x)$ for some non-radial sign-changing solution $\tilde v$ of (\ref{eq:main-eq-R-N}) in $\R^{N-1}$.

We restrict our attention to the case $N =3$ and consider the special class of {\em spiraling solutions} of the nonlinear Schr\"odinger equation 
\begin{equation}
  \label{eq:main-eq}
-\Delta v +q v =|v|^{p-2} v \qquad \text{in } \R^3,  
\end{equation}
i.e., solutions that are invariant under the action of a screw motion.

\medskip
To be more precise, let $\lambda>0$. We call a function $v: \R^3 \to \R$ $\lambda-$spiraling  if for any $\theta \in \R$,
\begin{equation}
  \label{eq:invariance-condition}
v(R_{\theta} x,t + \lambda \theta)= v(x,t) \qquad \text{for $x \in \R^2$, $t\in \R$,}
\end{equation}
where $R_\theta: \R^2 \to \R^2$ denotes the counter-clockwise rotation with angle $\theta$ in $\R^2$. Notice that $\lambda-$spiraling functions are $2 \lambda \pi$-periodic in $t$. Hence, the parameter $\lambda$ represents the rotational slope of the underlying screw motion, and $2 \lambda \pi$ is the associated turn-around shift. 

\medskip
Our work is partly inspired by the papers \cite{del Pino-Musso-Pacard} resp. \cite{Cinti-Davila-Del Pino} where spiraling solutions have been constructed for the classical and fractional Allen-Cahn equation, respectively. Without going into detail, we mention the well known fact that, despite its similar looking form,  the Allen-Cahn equation $-\Delta u = u - u^3$ differs significantly from the nonlinear Schr{\"o}dinger equation~(\ref{eq:main-eq}) with regard to the variational framework and the shape of solutions.

\medskip
In cylindrical coordinates $(x,t)= (r \cos \varphi, r \sin \varphi, t)$ with $(r, \varphi,t) \in [0,\infty) \times \R \times \R$, $\lambda-$spiraling functions have the form 
$$
v(r,\varphi,  t) = u \left(r,\varphi- \frac{t}{\lambda} \right)
$$
with a function 
$u: [0,\infty) \times \R \to \R$ which is $2 \pi$-periodic in the second variable. Also, in these coordinates the equation (\ref{eq:main-eq}) reads as 
$$
-v_{rr}-\frac{v_r}{r} - \frac{v_{\varphi \varphi }}{r^2} -  v_{tt} +q\,v = |v|^{p-2} v
$$
so that the equation for $u$ has the form
\begin{equation}
  \label{eq:v-equation-polar}
-u_{rr} -\frac{u_r}{r} - \Bigl(\frac{1}{\lambda^2}+ \frac{1}{r^2} \Bigr)u_{\theta\theta} + q\,u= |u|^{p-2} u .
\end{equation}

It is convenient to transform equation \eqref{eq:v-equation-polar} further to planar euclidean coordinates $x=(x_1,x_2)$, where $r= |x|$ and $\theta= \arcsin \frac{x_2}{|x|}$. This leads to the problem
\begin{equation}
\label{eq:untransformed-problem whole space}
\left \{
\begin{aligned}
-\Delta u - \frac{1}{\lambda^2} [x_1 \partial_{x_2} - x_2 \partial_{x_1}]^2 u + q\, u &= |u|^{p-2} u &&\qquad \text{on \,\,$\R^2$, }\\
u(x) & \to 0 &&   \qquad \text{as \,\,$|x| \to \infty$.}
\end{aligned}
\right.  
\end{equation}

Observe that radial solutions of (\ref{eq:untransformed-problem whole space}) correspond to axially symmetric and $t$-invariant solutions of (\ref{eq:main-eq}). By Theorem~\ref{symmetry of positive solutions}, every positive solution of (\ref{eq:untransformed-problem whole space}) is radial. On the other hand, nonradial solutions of (\ref{eq:untransformed-problem whole space}) correspond to solutions of (\ref{eq:main-eq}) which are $2 \lambda \pi$-periodic in $t$ but neither axially symmetric nor $t$-invariant. We therefore restrict our attention to nodal (i.e., sign-changing) solutions of \eqref{eq:untransformed-problem whole space}.

\medskip
We study problem \eqref{eq:untransformed-problem whole space} using variational  methods, and hence we first introduce some notation related to its variational structure. 

\medskip
We write $\del_\theta:=  x_1 \del_{x_2} - x_2 \del_{x_1}$ for the angular derivative and consider the space 
\begin{equation}
  \label{eq:def-H}
H:= \left\{u \in H^1(\R^2)\::\: \int_{\R^2} |\del_\theta u|^2 dx <\infty \right\}.
\end{equation}
For $\lambda>0$, we endow $H$ with the $\lambda$-dependent scalar product
\begin{equation}\label{eq:lambda-product}
\langle u,v \rangle_{\lambda} := \int_{\R^2} \Bigl(\nabla u \cdot \nabla v + \frac{1}{\lambda^2}
(\del_\theta u)(\del_\theta v) +  \,uv \Bigr) dx
\end{equation}
and consider the Hilbert space $(H,\langle\cdot,\cdot\rangle_{\lambda})$. 

\medskip
Let $E_\lambda: H \to \R$ be the energy functional associated to \eqref{eq:untransformed-problem whole space} in the case $q=1$, defined by 
\begin{equation}
  \label{eq:def-energy-functional-intro}
E_\lambda(u) := \frac{1}{2} \int_{\R^2} \Bigl(|\nabla u|^2 + \frac{1}{\lambda^2}
|\del_\theta u|^2 + |u|^2 \Bigr) dx -
\frac{1}{p}\int_{\R^2} |u|^p \,dx.
\end{equation}
By standard arguments, $E_\lambda$ is of class $C^1$, and critical points of $E_\lambda$ are weak solutions of (\ref{eq:untransformed-problem whole space}). 

\medskip
By definition, a least energy nodal solution of \eqref{eq:untransformed-problem whole space} is a minimizer of $E_\lambda$ within the class of sign-changing solutions of (\ref{eq:untransformed-problem whole space}). Our first main result is concerned with least energy nodal solutions and reads as follows.

\begin{theorem} \label{main theorem}
	Let $p > 2$ and $q=1$. 
	For every $\lambda>0$ there exists a least energy nodal solution of \eqref{eq:untransformed-problem whole space}. Furthermore, there exist $0<\lambda_0 \leq \Lambda_0 < \infty$ with the following properties:
	\begin{itemize}
		\item[(i)]
                  For $\lambda < \lambda_0$, every least energy nodal solution of \eqref{eq:untransformed-problem whole space} is radial.
                  \smallskip
                  
		\item[(ii)]
		For $\lambda > \Lambda_0$, every least energy nodal solution of \eqref{eq:untransformed-problem whole space} is nonradial.
	\end{itemize}
\end{theorem}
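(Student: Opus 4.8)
\emph{Overview and existence.} The plan is to realize least energy nodal solutions as minimizers of $E_\lambda$ over the \emph{nodal Nehari set}
\[
\mathcal N_\lambda:=\bigl\{u\in H:\ u^+\neq 0\neq u^-,\ E_\lambda'(u)[u^+]=E_\lambda'(u)[u^-]=0\bigr\},
\]
where $u=u^++u^-$ is the splitting into positive and negative parts, and then to compare the resulting level $c_\lambda^{\mathrm{nod}}:=\inf_{\mathcal N_\lambda}E_\lambda$ with the least energy $c^{\mathrm{nod}}_{\mathrm{rad}}$ of a radial sign-changing solution of $-\Delta v+v=|v|^{p-2}v$ in $\R^2$. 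The first ingredient I would establish is that $H$ embeds \emph{compactly} into $L^p(\R^2)$ for $2<p<\infty$: expanding $u\in H$ in angular Fourier modes, the radial mode lies in $H^1_{\mathrm{rad}}(\R^2)$ and is treated by Strauss' lemma, while the finiteness of $\int_{\R^2}|\partial_\theta u|^2\,dx$ yields, via a Poincar\'e inequality on $S^1$ and the one-dimensional Sobolev embedding in the angular variable, enough additional integrability on the higher modes to prevent mass from escaping to infinity. Since $E_\lambda(su^++tu^-)=E_\lambda(su^+)+E_\lambda(tu^-)$ and $E_\lambda'(u)[u^\pm]=\|u^\pm\|_\lambda^2-\int_{\R^2}|u^\pm|^p\,dx$, the standard fibering argument gives a unique $(s,t)\in(0,\infty)^2$ with $su^++tu^-\in\mathcal N_\lambda$ realizing $\max_{s,t>0}E_\lambda(su^++tu^-)$, so that $c_\lambda^{\mathrm{nod}}=\inf_{u^\pm\neq 0}\max_{s,t>0}E_\lambda(su^++tu^-)>0$. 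A minimizing sequence is bounded in $H$ by the Nehari identity $E_\lambda=(\tfrac12-\tfrac1p)\|\cdot\|_\lambda^2$ on $\mathcal N_\lambda$, hence converges in $L^p$; weak lower semicontinuity of $\|\cdot\|_\lambda$, the lower bound $\|u_n^\pm\|_\lambda\geq\rho>0$, and the structure of the constraint then force the weak limit to lie in $\mathcal N_\lambda$ and to attain $c_\lambda^{\mathrm{nod}}$. As $\mathcal N_\lambda$ is a natural constraint, this minimizer is a weak, hence (by elliptic regularity for $-\Delta-\lambda^{-2}\partial_\theta^2$) classical, sign-changing solution.

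\emph{The radial level and part (ii).} Write $E_\infty$ for $E_\lambda$ with the angular term deleted and let $c_0=E_\infty(U)$ be the ground state level ($U$ the positive radial soliton). Since radial functions satisfy $\partial_\theta v\equiv 0$, they are admissible for $\mathcal N_\lambda$ with $\lambda$-independent energy, so $c_\lambda^{\mathrm{nod}}\leq c^{\mathrm{nod}}_{\mathrm{rad}}$ for every $\lambda>0$; conversely, a radial nodal solution lying in $H$ solves $-\Delta v+v=|v|^{p-2}v$ in $\R^2$ and thus has energy $\geq c^{\mathrm{nod}}_{\mathrm{rad}}$. Moreover $c^{\mathrm{nod}}_{\mathrm{rad}}>2c_0$, because the positive and negative parts of a radial nodal solution each vanish on a set with nonempty interior and therefore cannot be ground states, so each contributes strictly more than $c_0$ to the energy. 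For (ii) I take $w^\pm:=\pm\chi_R(\cdot\mp y)\,U(\cdot\mp y)$ with $|y|=R$ and $\chi_R$ a cutoff supported in $B_{R/2}$, so that $w^+,w^-$ have disjoint supports and $w:=w^++w^-\in H$ is nonradial, with $\max_{s,t>0}E_\infty(sw^++tw^-)\to 2c_0<c^{\mathrm{nod}}_{\mathrm{rad}}$ as $R\to\infty$. Fixing such $R$ and using $E_\lambda(sw^++tw^-)=E_\infty(sw^++tw^-)+\tfrac{1}{2\lambda^2}(s^2\|\partial_\theta w^+\|_2^2+t^2\|\partial_\theta w^-\|_2^2)$, with the maximizing pair $(s,t)$ bounded for $\lambda\geq 1$, gives $c_\lambda^{\mathrm{nod}}\leq\max_{s,t}E_\infty(sw^++tw^-)+C\lambda^{-2}<c^{\mathrm{nod}}_{\mathrm{rad}}$ for all $\lambda$ larger than some $\Lambda_0$; hence any least energy nodal solution has energy below $c^{\mathrm{nod}}_{\mathrm{rad}}$ and cannot be radial.

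\emph{Part (i).} Suppose for contradiction there are $\lambda_n\to 0$ and nonradial least energy nodal solutions $u_n$. From $E_{\lambda_n}(u_n)=c_{\lambda_n}^{\mathrm{nod}}\leq c^{\mathrm{nod}}_{\mathrm{rad}}$ and the Nehari identity, $\|u_n\|_{H^1(\R^2)}$ is bounded and $\|\partial_\theta u_n\|_2^2\leq C\lambda_n^2\to 0$. Decompose $u_n=\bar u_n+u_n^\perp$ with $\bar u_n$ the angular average; Wirtinger's inequality on $S^1$ gives $\|u_n^\perp\|_2\leq\|\partial_\theta u_n\|_2\to 0$, and testing the equation with $u_n^\perp$ (the cross terms with the radial part $\bar u_n$ vanish) together with the planar Gagliardo--Nirenberg inequality bootstraps this to $\|u_n^\perp\|_{H^1}\leq C\lambda_n^{2/(p+2)}\to 0$. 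Since $u_n$ solves the equation, $u_n^\perp$ solves $\mathcal L_{\lambda_n}u_n^\perp=R_n$, where $\mathcal L_{\lambda_n}:=-\Delta-\lambda_n^{-2}\partial_\theta^2+1-(p-1)|\bar u_n|^{p-2}$ (acting on functions of vanishing angular average) and $R_n$ is the projected Taylor remainder of $s\mapsto|s|^{p-2}s$, with $\|R_n\|_{(H^1)^*}\leq C\|u_n^\perp\|_{H^1}^{1+\delta}$, $\delta=\min\{1,p-2\}>0$ (using that $\bar u_n$ is bounded in every $L^q(\R^2)$). Because $-\partial_\theta^2\geq 1$ on functions of vanishing angular average, the quadratic form of $\mathcal L_{\lambda_n}$ on such functions obeys $\langle\mathcal L_{\lambda_n}\phi,\phi\rangle\geq c\|\phi\|_{H^1}^2$ with $c>0$ for all large $n$; testing with $u_n^\perp$ yields $\|u_n^\perp\|_{H^1}^2\leq C\|u_n^\perp\|_{H^1}^{2+\delta}$, so either $u_n^\perp=0$ or $\|u_n^\perp\|_{H^1}\geq C^{-1/\delta}$. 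The latter is incompatible with $\|u_n^\perp\|_{H^1}\to 0$, so $u_n$ is radial for large $n$ --- a contradiction, proving (i) for a suitable $\lambda_0>0$. Finally $\lambda_0\leq\Lambda_0$ is automatic, since at each $\lambda$ there exists a least energy nodal solution, which cannot be radial and nonradial simultaneously.

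\emph{Main obstacle.} The crux is the last step of (i): a pure compactness argument only shows that a hypothetical nonradial least energy solution converges to a radial profile as $\lambda\to 0$, which does not by itself rule out small nonradial perturbations. Excluding them requires the quantitative coercivity of the linearization in the angular direction --- the $O(\lambda^2)$ bound on the inverse of $\mathcal L_\lambda$ restricted to functions of vanishing angular average --- combined with the superlinearity of the remainder, and one must carry out these estimates with constants uniform in $\lambda$. The subsidiary facts --- the compact embedding $H\hookrightarrow L^p(\R^2)$ and the strict inequality $c^{\mathrm{nod}}_{\mathrm{rad}}>2c_0$ --- are also essential and not entirely routine.
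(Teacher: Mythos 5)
Your proposal reaches the same conclusions as the paper but by genuinely different routes in both halves. For part (ii), the paper does not use two-bump test functions: it studies the Dirichlet problem on the half-plane $\R^2_+$, shows that its mountain-pass level $c_\lambda$ decreases to $c_\infty=E_*(w_\infty)$ as $\lambda\to\infty$ (Lemma~\ref{energy-asymptotics}), and obtains by odd reflection a nodal competitor of energy below $2c_\infty+\eps_*$; the matching lower bound for radial nodal solutions is imported from \cite{Weth}, which provides a \emph{uniform} gap $E_*(u)>2c_\infty+\eps_*$ over all nodal solutions of the autonomous limit problem. Your construction is more elementary and self-contained, but the one step you must tighten is the strict inequality $c^{\mathrm{nod}}_{\mathrm{rad}}>2c_0$: your argument shows that \emph{each individual} radial nodal solution has energy strictly above $2c_0$, which only gives $c^{\mathrm{nod}}_{\mathrm{rad}}\geq 2c_0$ unless you also prove that the infimum over radial nodal solutions is attained (a standard minimization over the radial nodal Nehari set in $H^1_{\mathrm{rad}}(\R^2)$, using the compactness of $H^1_{\mathrm{rad}}(\R^2)\hookrightarrow L^p(\R^2)$, supplies this); without attainment, or some substitute such as the uniform gap of \cite{Weth}, the comparison with your two-bump level does not close. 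For part (i), the paper avoids your linearization entirely: writing $u^\#$ for the radial average, it notes that $u-u^\#$ solves the difference equation, tests with $u-u^\#$, and uses the exact identity $\del_\theta\bigl(|u|^{p-2}u\bigr)=(p-1)|u|^{p-2}\del_\theta u$ together with the Wirtinger inequality of Lemma~\ref{lemma1}(i) to arrive directly at $\lambda^{-2}|\del_\theta u|_2^2\leq (p-1)|u|_\infty^{p-2}\,|\del_\theta u|_2^2$, forcing $\del_\theta u\equiv 0$ once $\lambda^2(p-1)|u|_\infty^{p-2}<1$; combined with the Moser-iteration $L^\infty$-bound with $\lambda$-independent constants (Lemma~\ref{L-infty regularity}) and the uniform energy bound of Lemma~\ref{Upper lower energy bound}, this yields (i) with no Taylor expansion or coercivity analysis of a linearized operator. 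Your route is workable but heavier, and your coercivity estimate for $\mathcal L_{\lambda_n}$ on the zero-average subspace still needs uniform control of $\int_{\R^2}|\bar u_n|^{p-2}\phi^2\,dx$ (obtainable by H\"older, interpolation and absorption into $\lambda_n^{-2}|\del_\theta\phi|_2^2$), so you do not fully escape the need for $\lambda$-uniform a priori bounds. Finally, for the compact embedding $H\hookrightarrow L^p(\R^2)$ the paper argues via Lions' concentration lemma and a rigidity argument for the directional derivative, rather than an angular Fourier decomposition; both approaches are viable.
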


Theorem~\ref{main theorem} establishes a symmetry breaking phenomenon for least energy nodal solutions which occurs within a finite range of parameters $\lambda \in [\lambda_0,\Lambda_0]$. We are not aware of any other setting where such a transition from radiality to nonradiality has been observed for least energy nodal solutions. The main difficulty when dealing with least energy radial nodal solutions of the equation $-\Delta u + u = |u|^{p-2}u$ in $\R^2$ is given by the fact that so far neither uniqueness (up to sign) nor nondegeneracy is known. Hence, in order to prove the first part of Theorem~\ref{main theorem}, we have to follow an approach which does not rely on these properties. In fact,
a more general radiality result for solutions of (\ref{eq:untransformed-problem whole space}) with small $\lambda>0$ can be obtained by combining uniform elliptic $L^\infty$-estimates with Poincar\'e type inequalities in the angular variable. More precisely, we have the following.

\begin{theorem} \label{non-radiality-L-infty-est-intro}
  Let $p >2$ and $q=1$.
  \begin{enumerate}
  \item If $u \in H$ is a nontrivial weak solution of (\ref{eq:untransformed-problem whole space}) for some $\lambda>0$ satisfying $\lambda < \Bigl(\frac{1}{(p-1)|u|_\infty^{p-2}}\Bigr)^{\frac{1}{2}}$, then $u$ is a radial function. 
  \item For every $c>0$, there exists $\lambda_c>0$ with the property that every weak solution $u \in H$ of (\ref{eq:untransformed-problem whole space})
for some $\lambda \in (0,\lambda_c)$ with $E_\lambda(u) \le c$ is radial.
\end{enumerate}
\end{theorem}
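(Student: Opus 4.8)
The plan is to decompose any solution into its angular average plus a remainder, and then to play off a Poincar\'e inequality on the circle against the monotonicity of $s\mapsto|s|^{p-2}s$.

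\emph{Part (1).} Write $u=u(r,\theta)$ in polar coordinates (so $u$ is $2\pi$-periodic in $\theta$) and set $\bar u(r):=\frac1{2\pi}\int_0^{2\pi}u(r,\theta)\,d\theta$ and $w:=u-\bar u$. By Jensen's inequality $\|\bar u\|_{L^2(\R^2)}\le\|u\|_{L^2(\R^2)}$, and since $\partial_r\bar u$ is the angular average of $\partial_r u$ one also gets $\|\nabla\bar u\|_{L^2(\R^2)}\le\|\nabla u\|_{L^2(\R^2)}$; as $\del_\theta\bar u\equiv0$ we conclude $\bar u\in H$, hence $w\in H$, so $w$ is admissible as a test function in the weak formulation of \eqref{eq:untransformed-problem whole space}. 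Moreover $\int_0^{2\pi}w(r,\theta)\,d\theta=0$ for a.e.\ $r$, and because $\bar u$ is radial while $w$ has vanishing angular mean one has $\int_{\R^2}\bar u\,w\,dx=\int_{\R^2}\nabla\bar u\cdot\nabla w\,dx=0$; combined with $\del_\theta\bar u\equiv0$ this gives $\langle\bar u,w\rangle_\lambda=0$. Testing \eqref{eq:untransformed-problem whole space} with $w$ therefore produces
$$
\langle w,w\rangle_\lambda=\int_{\R^2}|u|^{p-2}u\,w\,dx .
$$
Now $|\bar u|^{p-2}\bar u$ is radial, so $\int_{\R^2}|\bar u|^{p-2}\bar u\,w\,dx=0$; subtracting it and using $|u|^{p-2}u-|\bar u|^{p-2}\bar u=(p-1)\bigl(\int_0^1|\bar u+t w|^{p-2}\,dt\bigr)w$ together with the pointwise bound $|\bar u+t w|=|(1-t)\bar u+t u|\le|u|_\infty$ (convexity of $|\cdot|$ and $|\bar u(r)|\le|u|_\infty$) yields
$$
\langle w,w\rangle_\lambda=(p-1)\int_{\R^2}\Bigl(\int_0^1|\bar u+t w|^{p-2}\,dt\Bigr)w^2\,dx\le(p-1)|u|_\infty^{p-2}\int_{\R^2}w^2\,dx .
$$
On the other hand $\del_\theta w$ has zero angular mean and the smallest nonzero eigenvalue of $-\del_\theta^2$ on the circle equals $1$, so $\int_0^{2\pi}|\del_\theta w|^2\,d\theta\ge\int_0^{2\pi}w^2\,d\theta$ for a.e.\ $r$; integrating in $r$ gives $\langle w,w\rangle_\lambda\ge(1+\lambda^{-2})\int_{\R^2}w^2\,dx$. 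Hence $\bigl(1+\lambda^{-2}-(p-1)|u|_\infty^{p-2}\bigr)\int_{\R^2}w^2\,dx\le0$, and the hypothesis $\lambda^2<\bigl((p-1)|u|_\infty^{p-2}\bigr)^{-1}$ makes the prefactor positive; thus $w\equiv0$ and $u$ is radial. (That $|u|_\infty<\infty$ for a weak solution $u\in H$ is a standard consequence of elliptic regularity.)

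\emph{Part (2).} Here the idea is to derive an a priori bound $|u|_\infty\le M(c,p)$ and then invoke (1). Testing \eqref{eq:untransformed-problem whole space} with $u$ gives the Nehari identity $\langle u,u\rangle_\lambda=\int_{\R^2}|u|^p\,dx$, so $E_\lambda(u)=\frac{p-2}{2p}\langle u,u\rangle_\lambda$ and therefore $\|u\|_{H^1(\R^2)}^2\le\langle u,u\rangle_\lambda=\frac{2p}{p-2}E_\lambda(u)\le\frac{2p}{p-2}c$. By the two-dimensional embedding $H^1(\R^2)\hookrightarrow L^s(\R^2)$, valid for every $s<\infty$, the potentials $|u|^{p-2}$ are bounded in $L^t(\R^2)$ for every $t<\infty$ with bounds depending only on $c,p,t$, and are uniformly integrable. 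A Moser--Brezis--Kato iteration then yields $|u|_\infty\le M=M(c,p)$ uniformly in $\lambda$: testing \eqref{eq:untransformed-problem whole space} with $|u|^{2\beta}u$ contributes on the left-hand side the nonnegative angular term $\frac{2\beta+1}{\lambda^2}\int_{\R^2}|u|^{2\beta}|\del_\theta u|^2\,dx\ge0$, which may simply be discarded, leaving precisely the inequality one would have for $-\Delta u+u=|u|^{p-2}u$; splitting $|u|^{p-2}=|u|^{p-2}\chi_{\{|u|\le K\}}+|u|^{p-2}\chi_{\{|u|>K\}}$ and absorbing the second part via the uniform $L^t$-bounds and uniform integrability, the iteration closes with constants depending only on $c$ and $p$ (and not on $\lambda$, in particular still valid as $\lambda\to0$). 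Finally put $\lambda_c:=\bigl((p-1)M^{p-2}\bigr)^{-1/2}$: for $\lambda\in(0,\lambda_c)$ and any weak solution $u$ with $E_\lambda(u)\le c$ we have $\lambda<\lambda_c\le\bigl((p-1)|u|_\infty^{p-2}\bigr)^{-1/2}$, so (1) applies and $u$ is radial.

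The step I expect to be most delicate is the uniform $L^\infty$-estimate in Part (2): one must ensure that every constant occurring in the iteration depends on $c$ and $p$ only. This rests on three ingredients — the uniform $H^1$- (hence $L^s$-) bound coming from the energy constraint, the favorable sign of the angular term, which allows it to be discarded and thereby makes the whole estimate uniform in $\lambda$ (crucially including the limit $\lambda\to0$), and the uniform integrability of $\{|u|^{(p-2)t}\}$, which follows from the uniform $L^s$-bounds for slightly larger $s$. In Part (1) the only points requiring attention are the verification that $\bar u$, hence $w$, lies in $H$ and may be used as a test function, and the (routine) regularity guaranteeing $|u|_\infty<\infty$.
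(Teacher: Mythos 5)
Your proof is correct and follows essentially the same route as the paper: both parts rest on the decomposition $u = u^{\#} + (u-u^{\#})$ into the radial average plus a zero-mean remainder, the angular Wirtinger inequality of Lemma~\ref{lemma1}, and, for Part~(2), a $\lambda$-independent Moser-type $L^\infty$ bound obtained by exploiting the favorable sign of the angular term together with the uniform $H^1$ bound coming from the energy constraint. The only cosmetic difference is in Part~(1), where the paper subtracts the equation satisfied by $u^{\#}$ and applies Wirtinger a second time to $|u|^{p-2}u - \left(|u|^{p-2}u\right)^{\#}$ via the chain rule $\del_\theta\bigl(|u|^{p-2}u\bigr)=(p-1)|u|^{p-2}\del_\theta u$, whereas you test directly with $w$ and use a pointwise mean-value bound on the nonlinearity; both yield the same threshold for $\lambda$.
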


The first part of Theorem~\ref{main theorem} turns out to be a consequence of Theorem~\ref{non-radiality-L-infty-est-intro}(ii) and uniform (in $\lambda$) energy estimates for least energy nodal solutions of (\ref{eq:untransformed-problem whole space}) in the case $p>2$, $q=1$, see Section~\ref{sec:radi-vers-nonr} below.

\medskip
While least energy nodal solutions are particularly interesting from a variational point of view, Theorem~\ref{main theorem}(i) and Theorem~\ref{non-radiality-L-infty-est-intro}(ii) show that,  in order to detect nonradial sign-changing  solutions of (\ref{eq:untransformed-problem whole space}) for small values $\lambda>0$,  we have to pass to higher energy levels. A natural class of nonradial nodal solutions of (\ref{eq:untransformed-problem whole space}) is the class of odd solutions with respect to a hyperplane reflection. 

\medskip
If we consider the hyperplane $\{x_1=0\}$, then any such solution corresponds to a solution of the boundary value problem 
\begin{equation}
  \label{eq:untransformed-problem}
  \left \{
    \begin{aligned}
-\Delta u - \frac{1}{\lambda^2} [x_1 \partial_{x_2} - x_2 \partial_{x_1}]^2 u + q\,u &= |u|^{p-2}u &&\qquad \text{on $\R^2_+$,}\\
u &=0 && \qquad \text{on $\partial \R^2_+$}
    \end{aligned}
\right.  
\end{equation}
in the half space $\R^2_+:= \{x \in \R^2\::\: x_1>0\}$. Moreover, by odd reflection and transformation of coordinates, any such solution $u$ gives rise to a $\lambda-$spiraling nodal solution $v: \R^3 \to \R$ of (\ref{eq:main-eq}) with the property that  
$$
v(0,t)=0= v(R_{t}(0,x_2), \lambda t) \qquad \text{for all $t, x_2 \in \R$.}
$$
Consequently, $v$ vanishes on a helicoid, i.e. the condition $u =0$ on $\del \R_+^2$ implies that $v$ is zero on the set $\left\{ (x \sin t, x \cos t, \lambda t): t,x \in \R \right\}$.
\medskip

Weak solutions of (\ref{eq:untransformed-problem}) correspond to critical points of the $C^1$-functional $E_{\lambda}^+: H^+ \to \R$ defined by
\begin{equation}
  \label{eq:def-E-plus}
E_{\lambda}^+(u) := \frac{1}{2} \int_{\R^2_+} \bigl(|\nabla u|^2 + \frac{1}{\lambda^2} |\del_\theta u|^2 + q u^2 \bigr)dx- \frac{1}{p}
\int_{\R^2_+} |u|^p \,dx ,
\end{equation}
where \begin{equation}
  \label{eq:def-H-plus}
H^+:= \left\{u \in H^1_0(\R^2_+)\::\: \int_{\R^2_+} |\del_\theta u|^2 dx <\infty \right\}.
\end{equation}
By trivial extension, we regard $H^+$ as a closed subspace of $H$, see Section~\ref{sec:exist-symm-odd} below for details.

\medskip
Our main result for (\ref{eq:untransformed-problem}) reads as follows. 

\begin{theorem}
\label{exist-mountain-pass}  
Let $p>2$, $q \in \{0,1\}$ and $\lambda>0$. 

\begin{enumerate}
\item[(i)] (Existence) Problem (\ref{eq:untransformed-problem}) admits a positive least energy solution. 
\item[(ii)] (Symmetry) Any positive solution $u$ of (\ref{eq:untransformed-problem}) is symmetric with respect to reflection at the $x_1$-axis and decreasing in the angle $|\theta|$ from the $x_1$-axis. In particular, $u$ takes its maximum on the $x_1$-axis. 
\item[(iii)] (Asymptotics) If $q=1$ and $\lambda_k \ge 1$ are given with $\lambda_k \to + \infty$ as $k \to \infty$ and $u_k$ is a positive least energy solution of (\ref{eq:untransformed-problem}) with $\lambda= \lambda_k$, then, after 
passing to a subsequence,  there exists a sequence of numbers $\tau_k >0$ with 
$$
\tau_k \to + \infty, \qquad  \frac{\tau_k}{\lambda_k} \to 0 \qquad \text{as $k \to \infty$}
$$
such that the translated functions $w_k \in H^1(\R^2)$, $w_k(x)= u_k(x_1+ \tau_k,x_2)$ satisfy
$$
w_k \to w_\infty \qquad \text{strongly in $H^1(\R^2)$,}
$$
where $w_\infty$ is the unique positive radial solution of 
\begin{equation}
  \label{eq:unique-rad-pos-solution-limit-problem}
-\Delta w_\infty + w_\infty = |w_\infty|^{p-2}w_\infty, \qquad w_\infty \in H^1(\R^2).
\end{equation}
\end{enumerate}
\end{theorem}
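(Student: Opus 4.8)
For part (i), the plan is to use the standard Nehari manifold approach. The functional $E_\lambda^+$ has the mountain pass geometry on $H^+$: since $p>2$, one checks $E_\lambda^+(u) \ge \alpha \|u\|^2 - C\|u\|^p$ near the origin and $E_\lambda^+(tu) \to -\infty$ as $t \to \infty$ along any nonzero $u$. I would work on the Nehari manifold $\mathcal{N}_\lambda := \{u \in H^+ \setminus \{0\} : (E_\lambda^+)'(u)u = 0\}$ and minimize $E_\lambda^+$ over $\mathcal{N}_\lambda$, which by the usual fibering argument equals the mountain pass level. The main technical issue is the lack of compactness of the embedding $H^+ \embed L^p(\R^2_+)$ because $\R^2_+$ is unbounded. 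Here the angular constraint helps: functions in $H^+$ have finite $\del_\theta$-energy, which rules out escape of mass to infinity in the tangential direction; concentration can only occur by translation radially outward. One resolves this by a concentration-compactness / Lions-type argument, or — more in the spirit of part (iii) — by noting that along a minimizing sequence, translates $u_k(\cdot + \tau_k, \cdot)$ converge to a nontrivial limit, and then transferring this limit back via the symmetry from part (ii) to obtain an actual minimizer on $\mathcal{N}_\lambda$ (the $\del_\theta$-term is weakly lower semicontinuous and nonnegative, so it behaves well under this procedure). Positivity follows by replacing $u$ with $|u|$, which does not increase the energy (using $|\nabla |u||=|\nabla u|$ and $|\del_\theta|u||=|\del_\theta u|$ a.e.), and then applying the strong maximum principle to the resulting nonnegative solution.

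For part (ii), I would use the moving plane method adapted to the operator $L_\lambda := -\Delta - \frac{1}{\lambda^2}\del_\theta^2$ on $\R^2_+$. The key observation is that $\del_\theta = x_1\del_{x_2} - x_2\del_{x_1}$ generates rotations, so $L_\lambda$ is invariant under rotations about the origin; in polar-type coordinates $(r,\theta)$ the equation is \eqref{eq:v-equation-polar}, which is reflection-symmetric in $\theta$. Hence I would perform moving planes in the angular variable $\theta$: for $\mu > 0$ compare $u(r,\theta)$ with $u(r, 2\mu - \theta)$ on the angular sector, using that $u = 0$ on $\{\theta = \pm\pi/2\}$ (i.e. on $\del\R^2_+$) as the boundary data that starts the procedure, and that $u > 0$ in the interior. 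The maximum principle applies because the difference solves a linear equation with a bounded (by elliptic regularity, $u \in L^\infty$) zeroth-order coefficient of the right sign after exploiting $f(s) = |s|^{p-2}s$ with $f(u) - f(u_\mu) = c(x)(u - u_\mu)$, $c \ge 0$. Sliding the plane from $\theta = \pi/2$ down to $\theta = 0$ yields monotonicity in $|\theta|$ and symmetry across $\theta = 0$, i.e. the $x_1$-axis. One must be slightly careful at $r = 0$ (the origin) and about decay as $r \to \infty$ to start the moving plane procedure, but the uniform decay $u(x) \to 0$ as $|x| \to \infty$ handles the latter, and the origin is a removable lower-dimensional set.

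For part (iii), the idea is that as $\lambda_k \to \infty$ the coefficient $\frac{1}{\lambda_k^2}$ of the angular term vanishes and $E_{\lambda_k}^+$ formally degenerates to the energy of $-\Delta u + u = |u|^{p-2}u$ on $\R^2_+$ with Dirichlet conditions, whose least energy (nodal, via odd reflection) solution is built from the unique positive radial ground state $w_\infty$ of \eqref{eq:unique-rad-pos-solution-limit-problem} placed far from $\del\R^2_+$. Concretely I would first establish two-sided energy bounds: $E_{\lambda_k}^+(u_k) \le I_\infty + o(1)$ where $I_\infty$ is the ground state energy of \eqref{eq:unique-rad-pos-solution-limit-problem} (test with a translated $w_\infty$ cutoff far from the boundary and estimate the angular energy, which is $O(\tau^2/\lambda^2)$ for a bump at distance $\tau$), and a matching lower bound forcing $\frac{1}{\lambda_k^2}\int |\del_\theta u_k|^2 \to 0$ and $\tau_k/\lambda_k \to 0$. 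By part (ii) each $u_k$ is symmetric about the $x_1$-axis and peaks on it; let $\tau_k$ be (roughly) the location of the maximum. After translating, $w_k = u_k(\cdot + \tau_k, \cdot)$ satisfies an equation that converges to $-\Delta w + w = |w|^{p-2}w$ on all of $\R^2$ (the half-space recedes since $\tau_k \to \infty$, and the angular term drops out since $|\del_\theta w_k|$ is controlled relative to $\lambda_k$ on the relevant region), with energy converging to $I_\infty$. A concentration-compactness argument combined with the energy being exactly at the ground state level rules out vanishing and dichotomy, so $w_k \to w_\infty$ strongly in $H^1(\R^2)$ after passing to a subsequence, using uniqueness (Kwong) of the positive radial solution to pin down the limit. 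The hardest part of the whole theorem is this asymptotic analysis — specifically, getting the sharp energy expansion accurate enough to conclude $\tau_k \to \infty$ (so the boundary does not interfere in the limit) while $\tau_k/\lambda_k \to 0$ (so the angular term is negligible), and then upgrading weak to strong convergence via the exact-level compactness.
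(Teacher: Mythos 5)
Your outline for parts (ii) and (iii) matches the paper's architecture (rotating planes in the angular variable; translation along the $x_1$-axis plus an exact-level compactness argument), but there are two concrete gaps. First, in part (i) you misidentify the functional-analytic situation: the embedding $H^+ \hookrightarrow L^\rho(\R^2_+)$ is in fact \emph{compact} for every $\rho>2$ (Lemma~\ref{lemma2} and Corollary~\ref{H+ properties}), precisely because translating a fixed bump to distance $r$ from the origin forces $\int|\del_\theta u|^2\,dx \gtrsim r^2$; so ``concentration by translation radially outward'' is exactly what the angular energy forbids, and no concentration-compactness machinery is needed -- the Nehari minimization is routine once this is established. More importantly, your fallback of letting far translates $u_k(\cdot+\tau_k,\cdot)$ converge and ``transferring this limit back via the symmetry'' is not a valid argument: if $\tau_k\to\infty$ the limit solves the \emph{limiting} equation without the angular term, is not a critical point of $E_\lambda^+$, and cannot be converted into a minimizer of the original problem. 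Second, in part (iii) you assert $\tau_k\to+\infty$ but do not prove it; a ``sharp energy expansion'' does not obviously yield this, because the Nehari infimum over the half-plane coincides with the whole-plane ground state level. The paper's argument is that if $(\tau_k)$ had a bounded subsequence, the weak limit would be a nontrivial nonnegative solution of $-\Delta u+u=u^{p-1}$ in $\R^2_+$ with zero Dirichlet data, which is impossible by the nonexistence theorem of Esteban and Lions; some such non-attainment/nonexistence input is indispensable and is missing from your proposal.

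A smaller point on part (ii): for a positive solution one only knows $u\in H^+\cap L^\infty$, and uniform pointwise decay as $|x|\to\infty$ is not available a priori, so you cannot use it to start the moving planes. The paper instead starts the rotating-plane procedure with an integral argument: testing the equation for $w_\theta^-$ on the thin cone $\Sigma_\theta$ and invoking the angular Wirtinger--Poincar\'e inequality of Lemma~\ref{lemma1}(ii), whose constant $\tfrac{2\theta}{\pi}$ beats the bounded zeroth-order coefficient once the opening angle is small. The openness step likewise uses an $L^\rho$-smallness argument outside a compact set rather than pointwise decay. Your sliding step is otherwise in the same spirit, and your treatment of the coefficient $c_\theta$ via $f(u)-f(u_\theta)=c_\theta(x)(u-u_\theta)$ with $c_\theta$ bounded above by $C(|u|_\infty^{\sigma_1}+|u|_\infty^{\sigma_2})$ is consistent with the paper (note $c_\theta$ need not be nonnegative; only its upper bound matters).
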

Similarly as defined for the equation \eqref{eq:untransformed-problem whole space}, a least energy solution of \eqref{eq:untransformed-problem} is, by definition, an energy minimizer within the class of nontrivial solutions of (\ref{eq:untransformed-problem}). More specifically, least energy solutions will be characterized as minimizers of $E_{\lambda}^+$ w.r.t. the associated Nehari manifold and attain the mountain pass level
\begin{equation}\label{eq:MP-level}
c_{\lambda}=\inf_{u \in H^+ \setminus \{ 0\}} \sup_{t \geq 0} E_{\lambda}^+(tu),
\end{equation}
see Section~\ref{sec:exist-symm-odd} below. We also point out that the uniqueness of a positive radial solution to \eqref{eq:unique-rad-pos-solution-limit-problem} was shown by Kwong \cite{Kwong}. 

\begin{remark}
\label{remark-intro}{\rm
\begin{itemize} 
	\item[(i)] Let $p>2$ and $q=1$. 
As a consequence of Theorem~\ref{exist-mountain-pass}, the energy of the least energy nodal solution of (\ref{eq:untransformed-problem whole space}), as considered in Theorem \ref{main theorem}, tends to $2 c_\infty$ as $\lambda \to \infty$, where $c_\infty$ is the least energy of nontrivial solutions of the limit problem (\ref{eq:unique-rad-pos-solution-limit-problem}). This fact is the key ingredient in the proof of Theorem~\ref{main theorem}(ii).

\medskip
\item[(ii)] The existence result for (\ref{eq:untransformed-problem}) for $p>2$ and $q \in \{0,1\}$ relies on compact embeddings. More precisely, we will prove in Section~\ref{sec:funct-analyt-fram} below that the space $H$ is compactly embedded into $L^\rho(\R^2)$ for $\rho \in (2,\infty)$, which readily implies that the space $H^+$ is compactly embedded in $L^\rho(\R^2_+)$ for $\rho \in (2,\infty)$. With the help of these embeddings and by applying the symmetric mountain pass theorem (see Theorem 6.5 in \cite{Struwe}), we may also prove, for any $\lambda>0$, the existence of a sequence of pairs of solutions $\pm u_j$ whose sequence of energies is unbounded.
\end{itemize}}
\end{remark}

The existence and symmetry parts of Theorem~\ref{exist-mountain-pass} extend to a larger class of semilinear equations, see Section~\ref{sec:exist-symm-odd} below. Next, we shall see that the case $q=0$ in (\ref{eq:untransformed-problem}) arises naturally when considering the asymptotics of positive least energy solutions of (\ref{eq:untransformed-problem}) in the case $q=1$ when $\lambda \to 0$.
We shall see that these solutions concentrate at the origin as $\lambda \to 0$. More precisely, we have the following. 

\begin{theorem}  \label{rescaled convergence}
Let $(\lambda_k)_k$ be sequence of numbers $\lambda_k \le 1$ such that $\lambda_k \to 0$ as $k \to \infty$. Moreover, let $u_k \in H^+$ be a positive least energy solution
of \eqref{eq:untransformed-problem} with $q=1$, and let $v_k \in H^+$ be defined by $v_k(x)= \lambda_k^{\frac{2}{p-2}}u_k(\lambda_k x).$
\medskip

Then, after passing to a subsequence, we have $ v_k  \to v^*$ in $H^+$, where $v^*$ is a positive least energy solution of the problem 
\begin{equation}
  \label{eq:untransformed-problem-limit-lambda-0}
  \left \{
    \begin{aligned}
-\Delta v^* - [x_1 \partial_{x_2} - x_2 \partial_{x_1}]^2 v^*&= |v^*|^{p-2}v^* &&\qquad \text{on $\R^2_+$,}\\
v^* &=0 && \qquad \text{on $\partial \R^2_+$}
    \end{aligned}
\right.  
\end{equation}
\end{theorem}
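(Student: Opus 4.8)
The plan is to pass to the limit $\lambda_k\to 0$ in the \emph{rescaled} equations, exploiting the compact embedding $H^+\hookrightarrow L^\rho(\R^2_+)$ for $\rho\in(2,\infty)$ established above together with the variational characterization of least energy solutions from Theorem~\ref{exist-mountain-pass}. First I would record the effect of the scaling. Since the angular derivative $\del_\theta=x_1\del_{x_2}-x_2\del_{x_1}$ commutes with dilations, a direct computation shows that $v_k(x)=\lambda_k^{2/(p-2)}u_k(\lambda_k x)$ belongs to $H^+$ and solves
\[
-\Delta v_k-[x_1\del_{x_2}-x_2\del_{x_1}]^2 v_k+\lambda_k^2 v_k=|v_k|^{p-2}v_k \quad\text{on }\R^2_+,\qquad v_k=0\text{ on }\partial\R^2_+ .
\]
Moreover $u\mapsto v$, $v(x)=\lambda_k^{2/(p-2)}u(\lambda_k x)$, is a bijection of $H^+$ onto itself satisfying $E_{\lambda_k}^+(u)=\lambda_k^{4/(p-2)}J_k(v)$, where
\[
J_k(v):=\frac12\int_{\R^2_+}\bigl(|\nabla v|^2+|\del_\theta v|^2+\lambda_k^2 v^2\bigr)\,dx-\frac1p\int_{\R^2_+}|v|^p\,dx ,
\]
and it carries the Nehari manifold of $E_{\lambda_k}^+$ onto that of $J_k$. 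Consequently $v_k$ is a positive least energy solution of the displayed equation and realizes the mountain pass level $c_k:=\inf_{v\in H^+\setminus\{0\}}\sup_{t\ge0}J_k(tv)=\lambda_k^{4/(p-2)}c_{\lambda_k}$.

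Next I would establish uniform bounds on $(v_k)$ in $H^+$. The crucial ingredient is the angular Poincar\'e inequality $\int_{\R^2_+}|\del_\theta u|^2\,dx\ge C\int_{\R^2_+}u^2\,dx$ on $H^+$ (in polar coordinates an element of $H^+$ restricts, on each circle $|x|=r$, to a function on the arc $\{x_1>0\}$ vanishing at its two endpoints, so the one-dimensional Poincar\'e inequality applies there), which makes $\|v\|^2:=\int_{\R^2_+}(|\nabla v|^2+|\del_\theta v|^2)\,dx$ an equivalent norm on $H^+$, uniformly in $k$. Writing $J_0$ for the energy functional of \eqref{eq:untransformed-problem-limit-lambda-0} (that is, $E_1^+$ with $q=0$) and $c_0$ for its mountain pass level, one has $J_k\ge J_0$ pointwise, hence $c_k\ge c_0$; on the other hand $J_k(t\varphi)\to J_0(t\varphi)$ uniformly for $t$ in compact sets for any fixed $\varphi\in H^+\setminus\{0\}$ (using $\lambda_k\le 1$), so $c_k$ is bounded and in fact $c_k\to c_0$. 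Since $v_k$ lies on the Nehari manifold of $J_k$, testing the equation against $v_k$ gives $\|v_k\|^2+\lambda_k^2\int_{\R^2_+}v_k^2\,dx=\int_{\R^2_+}|v_k|^p\,dx=\tfrac{2p}{p-2}c_k$, so $(v_k)$ is bounded in $H^+$.

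Then I would pass to the limit. After passing to a subsequence, $v_k\weakto v^*$ in $(H^+,\|\cdot\|)$ and $v_k\to v^*$ in $L^\rho(\R^2_+)$ for every $\rho\in(2,\infty)$, with $v^*\ge 0$. Testing the equation for $v_k$ against an arbitrary $\varphi\in H^+$ and letting $k\to\infty$, the term $\lambda_k^2\int_{\R^2_+}v_k\varphi\,dx$ vanishes (as $\lambda_k\to0$ and $(v_k)$ is $L^2$-bounded) while $\int_{\R^2_+}|v_k|^{p-2}v_k\varphi\,dx\to\int_{\R^2_+}|v^*|^{p-2}v^*\varphi\,dx$ by strong $L^p$ convergence; hence $v^*$ solves \eqref{eq:untransformed-problem-limit-lambda-0} weakly. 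To see $v^*\not\equiv0$, I would combine the Nehari identity $\int_{\R^2_+}|v_k|^p\,dx\ge\|v_k\|^2$ with the Sobolev inequality $\int_{\R^2_+}|v|^p\,dx\le C\|v\|^p$ (valid on $H^+$ thanks to the angular Poincar\'e inequality) to deduce $\int_{\R^2_+}|v_k|^p\,dx\ge\delta>0$ uniformly in $k$, whence $\int_{\R^2_+}|v^*|^p\,dx\ge\delta$; positivity of $v^*$ then follows from the maximum principle. Finally, $v^*$ being a nontrivial solution lies on the Nehari manifold of $J_0$, so
\[
J_0(v^*)=\Bigl(\tfrac12-\tfrac1p\Bigr)\int_{\R^2_+}|v^*|^p\,dx=\lim_k\Bigl(\tfrac12-\tfrac1p\Bigr)\int_{\R^2_+}|v_k|^p\,dx=\lim_k J_k(v_k)=\lim_k c_k=c_0 ,
\]
so $v^*$ is a positive least energy solution of \eqref{eq:untransformed-problem-limit-lambda-0}. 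Strong convergence is then immediate: $\|v_k\|^2=\int_{\R^2_+}|v_k|^p\,dx-\lambda_k^2\int_{\R^2_+}v_k^2\,dx\to\int_{\R^2_+}|v^*|^p\,dx=\|v^*\|^2$, which together with $v_k\weakto v^*$ yields $v_k\to v^*$ in $H^+$.

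The step I expect to be the main obstacle is ruling out the vanishing of the weak limit while simultaneously keeping all estimates uniform as $\lambda_k\to0$, so that the lower-order term $\lambda_k^2 v_k$ genuinely disappears without destroying coercivity of the functionals $J_k$. Both difficulties dissolve once the angular Poincar\'e inequality on $H^+$ and the uniform lower bound on the Nehari constraint are available; the remaining arguments are routine.
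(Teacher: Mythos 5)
Your proposal is correct, and while it follows the same overall skeleton as the paper (rescale to the problem \eqref{transformed lambda to zero equation}, prove uniform $H^+$-bounds, extract a weak limit, identify it as a nontrivial solution of \eqref{eq:untransformed-problem-limit-lambda-0}, then upgrade to least energy and strong convergence), the decisive last step is carried out by a genuinely different and more elementary mechanism. The paper establishes the least-energy property and the norm convergence via $\Gamma$-convergence of the Rayleigh-type quotients $F_k(u)=(\|u\|_{1,0}^2+\lambda_k^2|u|_2^2)^{p/(p-2)}/|u|_p^{2p/(p-2)}$ in the weak topology, invoking \cite[Corollary 7.20]{DalMaso1993Book}; you instead prove directly that the rescaled mountain pass levels satisfy $c_k\to c_0$ (the inequality $c_k\ge c_0$ from $J_k\ge J_0$, and $\limsup_k c_k\le\sup_{t\ge0}J_0(t\varphi)$ for each fixed $\varphi$), and then exploit the Nehari identity $J_0(v^*)=\bigl(\tfrac12-\tfrac1p\bigr)|v^*|_p^p$ together with the compact embedding $H^+\hookrightarrow L^p(\R^2_+)$ to conclude $J_0(v^*)=\lim_k c_k=c_0$; strong convergence then follows from $\|v_k\|^2=|v_k|_p^p-\lambda_k^2|v_k|_2^2\to|v^*|_p^p=\|v^*\|^2$ and uniform convexity. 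Your nontriviality argument (uniform lower bound $|v_k|_p^{p-2}\ge C^{-2/p}$ from the Nehari constraint and the Sobolev inequality, passed to the limit in $L^p$) also differs from the paper's contradiction argument based on the two-sided energy bounds, but is equally valid. What your route buys is self-containedness: no appeal to $\Gamma$-convergence theory is needed, only the explicit form of $\sup_{t\ge0}J_k(t\varphi)$ and the angular Poincar\'e inequality of Corollary~\ref{H+ properties}, which you correctly identify as the ingredient that keeps all estimates uniform as $\lambda_k\to0$. What the paper's route buys is a statement (convergence of minimizers of $\Gamma$-converging functionals) that is reusable in more general settings where the energy on the Nehari manifold is not simply a multiple of $|\cdot|_p^p$.
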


\begin{remark}{\rm
The statements given in Theorems~\ref{exist-mountain-pass}(i) and \ref{rescaled convergence} remain valid when the underlying half space $\R^2_+$ is replaced by the cone 
$$
C_{\alpha}:= \{x \in \R^2\::\: x_1>0, \quad \arcsin \frac{x_2}{|x|} < \alpha\}.
$$
In particular, in the case where $ \alpha = \frac{\pi}{2 j}$ for some positive integer $j$, successive reflection yields solutions with precisely $2j$ nodal domains.}
\end{remark}

\medskip
The paper is organized as follows. Section~\ref{sec:funct-analyt-fram} sets up the functional analytic framework and provides some preliminary results. In particular, we shall prove the compactness of the embedding $H \hookrightarrow L^\rho(\R^2)$ for $\rho \in (2,\infty)$, and we establish the existence of least energy nodal solutions for problem \eqref{eq:untransformed-problem whole space}. In Section~\ref{sec:exist-symm-odd}, we study the symmetry and existence of ground state solutions for a generalization of problem \eqref{eq:untransformed-problem}. In Section~\ref{sec:asympt-via-energy-lemma} we discuss the asymptotics of least energy solutions to (\ref{eq:untransformed-problem}) as $\lambda \to \infty$ and as $\lambda \to 0$ and prove Theorems \ref{exist-mountain-pass} and \ref{rescaled convergence}. 
Finally, Section~\ref{sec:radi-vers-nonr} is devoted to the proofs of Theorem \ref{main theorem} and Theorem~\ref{non-radiality-L-infty-est-intro}.
In the appendix, we prove a result on uniform $L^\infty$-bounds for weak solutions of \eqref{eq:untransformed-problem whole space} in the case $q=1$.

\section{Preliminary results}
\label{sec:funct-analyt-fram}
In the following, all functions are assumed to be real-valued. We consider the space $H$ defined in (\ref{eq:def-H}) with the $\lambda$-dependent scalar product defined in \eqref{eq:lambda-product} with $\|\cdot\|_\lambda$ denoting the corresponding norm. The space $(H, \langle \cdot, \cdot \rangle_\lambda)$ is a Hilbert space and clearly, all the norms $\|\cdot\|_\lambda$, $\lambda>0$, are equivalent.

\medskip
For easier distinction from the norms on $H$, for $\rho \in [1,\infty]$, we will use the notation $|\cdot|_{\rho}$ to denote the standard norm on $L^{\rho}(\R^2)$.

\medskip
Recall also that we have set $\del_\theta:=[x_1 \del_{x_2} - x_2 \del_{x_1}]$ for the angular derivative operator.    We first note the following. 

\begin{lemma}
\label{space-test-functions-dense}
For any $\lambda>0$, the space $C_c^\infty(\R^2)$ of test functions is dense in $(H,\langle \cdot,\cdot\rangle_{\lambda})$.  
\end{lemma}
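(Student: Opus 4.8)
The plan is to approximate a general $u \in H$ by test functions in two stages: first by truncation to reduce to compactly supported functions, then by mollification to achieve smoothness. The key point to watch throughout is that the $\|\cdot\|_\lambda$-norm involves the angular derivative $\del_\theta u = x_1 \del_{x_2} u - x_2 \del_{x_1} u$, whose coefficients grow linearly in $|x|$; this interacts badly with standard scaling-based cutoffs, so the cutoff must be chosen with care near infinity.

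\medskip
\textbf{Step 1: reduction to bounded support.} Fix $u \in H$. Choose a cutoff function $\chi \in C_c^\infty(\R^2)$ with $\chi \equiv 1$ on $B_1(0)$, $0 \le \chi \le 1$, $\supp \chi \subset B_2(0)$, and set $\chi_R(x) := \chi(x/R)$. I claim $u_R := \chi_R u \to u$ in $(H,\langle \cdot,\cdot\rangle_\lambda)$ as $R \to \infty$. Since $u \in H^1(\R^2)$ and $\del_\theta u \in L^2(\R^2)$, it suffices by dominated convergence to control the three terms $\nabla(\chi_R u) = \chi_R \nabla u + u\nabla \chi_R$, $\del_\theta(\chi_R u) = \chi_R \del_\theta u + u\, \del_\theta \chi_R$, and $\chi_R u$. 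The terms with $\chi_R$ multiplying a fixed $L^2$ function converge to the full term by dominated convergence. For the error terms, $|\nabla \chi_R| \le CR^{-1}\mathbf{1}_{\{R \le |x| \le 2R\}}$, so $|u\nabla \chi_R|_2 \le CR^{-1}|u|_{L^2(\{|x|\le 2R\})} \to 0$. The crucial term is $u\,\del_\theta \chi_R$: since $\del_\theta$ is a first-order operator with coefficients of size $|x|$, we get $|\del_\theta \chi_R(x)| = |x_1 \del_{x_2}\chi_R - x_2 \del_{x_1}\chi_R| \le |x|\,|\nabla\chi_R(x)| \le C\mathbf{1}_{\{R \le |x| \le 2R\}}$ — the linear growth exactly cancels the $R^{-1}$ decay, leaving a uniformly bounded function supported on the annulus $\{R \le |x| \le 2R\}$. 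Hence $|u\,\del_\theta\chi_R|_2 \le C|u|_{L^2(\{|x| \ge R\})} \to 0$ as $R \to \infty$. (In fact $\del_\theta$ annihilates radial functions, so $\del_\theta \chi_R$ vanishes for radial $\chi$; but the bound above works for any cutoff.) Thus $u_R \to u$ in $H$, and each $u_R$ lies in $H$ with compact support.

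\medskip
\textbf{Step 2: mollification.} Now fix a compactly supported $w \in H$ and let $\eta_\delta$ be a standard mollifier. Then $w_\delta := \eta_\delta * w \in C_c^\infty(\R^2)$. Standard mollifier theory gives $w_\delta \to w$ and $\nabla w_\delta \to \nabla w$ in $L^2$. It remains to show $\del_\theta w_\delta \to \del_\theta w$ in $L^2$. The obstruction is that $\del_\theta$ does not commute with convolution because its coefficients are non-constant; one computes the commutator $[\del_\theta, \eta_\delta * \,]w = \del_\theta(\eta_\delta * w) - \eta_\delta *(\del_\theta w)$, which evaluates to an integral against $(y - \text{(the shift)})$-type kernels of size $\delta$ times $\nabla w$. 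Concretely, writing things out, $\del_\theta(\eta_\delta * w)(x) - (\eta_\delta * \del_\theta w)(x) = \int \eta_\delta(y)\,\big[(x_1\del_{x_2} - x_2\del_{x_1}) - ((x_1-y_1)\del_{x_2} - (x_2-y_2)\del_{x_1})\big]w(x-y)\,dy = \int \eta_\delta(y)\,(y_1 \del_{x_2}w - y_2\del_{x_1}w)(x-y)\,dy$, whose $L^2$-norm is bounded by $\delta\,|\nabla w|_2 \to 0$. Combined with $\eta_\delta * \del_\theta w \to \del_\theta w$ in $L^2$ (ordinary mollification, valid since $\del_\theta w \in L^2$), this gives $\del_\theta w_\delta \to \del_\theta w$. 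Therefore $w_\delta \to w$ in $\|\cdot\|_\lambda$.

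\medskip
\textbf{Conclusion.} Given $u \in H$ and $\varepsilon > 0$, pick $R$ with $\|u - \chi_R u\|_\lambda < \varepsilon/2$, then pick $\delta$ with $\|\chi_R u - \eta_\delta *(\chi_R u)\|_\lambda < \varepsilon/2$; the function $\eta_\delta * (\chi_R u) \in C_c^\infty(\R^2)$ is within $\varepsilon$ of $u$. Since all norms $\|\cdot\|_\lambda$ are equivalent, it suffices to prove this for one value of $\lambda$, though the argument above works directly for every $\lambda > 0$. The main obstacle, and the only place real care is needed, is the treatment of the angular derivative: in Step 1 the linear growth of the coefficients of $\del_\theta$ must be checked to be exactly absorbed by the decay of $\nabla \chi_R$, and in Step 2 the non-constant coefficients force a commutator estimate rather than a direct commutation with the mollifier. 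Both are handled by the elementary bounds $|\del_\theta \chi_R| \le |x|\,|\nabla\chi_R|$ and the $O(\delta)$ commutator bound above.
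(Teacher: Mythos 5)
Your proof is correct and follows essentially the same two-step route as the paper: cut off to compact support, then mollify. The only difference is in the mollification step, where the paper exploits that the approximants are all supported in a fixed compact set $K$, on which $|\del_\theta v|\le C_K|\nabla v|$ so that $H^1$-convergence already implies convergence in $\|\cdot\|_\lambda$, whereas you obtain the same convergence via an explicit $O(\delta)$ commutator estimate for $\del_\theta(\eta_\delta * w)-\eta_\delta * \del_\theta w$; both are valid, and your Step 1 bound $|\del_\theta \chi_R|\le |x|\,|\nabla \chi_R|$ is exactly the ``straightforward cut-off argument'' the paper leaves implicit.
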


\begin{proof}
The argument is essentially the same as the one proving the density of $C_c^\infty(\R^2)$ in $H^1(\R^2)$, see e.g. the proof of Theorem 9.2  in \cite{Brezisbook}. We only sketch it briefly. Let $W$ denote the subspace of functions in $H$ which vanish outside a bounded subset of $\R^2$. By a 
straightforward cut-off argument, $W$ is dense in $H$. Moreover, for a given function $u \in W$, it is well known that a sequence of mollifications $u_n \in C_c^\infty(\R^2)$ of $u$ converges to $u$ in the $H^1$-norm. Moreover, since there is a compact set $K \subset \R^2$ with the property that every $u_n$, $n \in \N$ vanishes in $\R^2 \setminus K$, the convergence in the $H^1$-norm also implies  convergence in $\|\cdot\|_\lambda$. This shows the claim.
\end{proof}

\medskip
Next, we consider the radial averaging operator 
\begin{equation}
  \label{eq:radial-averaging}
L^1_{loc}(\R^2) \to L^1_{loc}(\R^2), \quad u \mapsto u^\#, \qquad \text{with}\quad u^\# (x) := \frac{1}{2\pi}\int_{S^1} u(|x|\omega) \, d\omega \quad \text{for a.e. $x \in \R^2$.}  
\end{equation}
We note that, as a consequence of Jensen's inequality, the averaging operator extends to a continuous linear map $L^\rho(\R^2) \to L^{\rho}(\R^2)$ for every $\rho \in [1,\infty]$ with
\begin{equation}
  \label{eq:radial-averaging-L-rho-ineq}
|u^\#|_{\rho} \le |u|_{\rho} \qquad \text{for every $u \in L^{\rho}(\R^2)$.}
\end{equation}
Moreover, since $u^\# \in C_c^1(\R^2)$ for $u \in C_c^1(\R^2)$ and 
$$
\|u^\#\|_{\lambda}=\|u^\#\|_{H^1(\R^2)} \leq \|u\|_{H^{1}(\R^2)} \leq \|u\|_{\lambda} \quad \text{for $\lambda>0$,}
$$ 
the operator $u \mapsto u^\#$ extends to a continuous linear map $H \to H$.

We need the following angular Poincar\'e type estimates. 

\begin{lemma} 
\label{lemma1}  \,
\begin{enumerate} 
\item For any $u \in H$,
$$
|u|_{2}^2 \le |\del_\theta u|_2^2 + |u^\#|_{2}^2 .
$$
In particular, any $u \in H$ with $u^\# \equiv 0$ satisfies $|u|_{2}^2 \le |\del_\theta u|_2^2.$

\medskip
\item Let $\theta_0 \in (0,\pi)$ and consider the cone 
$$
C_{\theta_0} :=  \{ (r\cos \theta,r \sin \theta) \in \R^2\::\: r>0, |\theta| < \theta_0 \}.
$$

If $u \equiv 0$ on $\R^2 \setminus C_{\theta_0}$, then we have 
$$
|u|_{2} \le \frac{2\theta_0}{\pi}|\partial_\theta u|_{2}.
$$
\end{enumerate}
\end{lemma}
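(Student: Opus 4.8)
My plan is to reduce both parts to a one–dimensional Poincaré (Wirtinger) inequality in the angular variable, using polar coordinates. Recall $\del_\theta = x_1\del_{x_2}-x_2\del_{x_1}$, so in polar coordinates $(r,\theta)$ one has $\del_\theta u = \partial_\theta u(r,\theta)$, and for a fixed $r>0$ the function $\theta \mapsto u(r,\theta)$ is the relevant object. Since $C_c^\infty(\R^2)$ is dense in $(H,\langle\cdot,\cdot\rangle_\lambda)$ by Lemma~\ref{space-test-functions-dense} and all these norms are equivalent, it suffices to prove both estimates for $u \in C_c^\infty(\R^2)$ and then pass to the limit; for part (2) I would additionally note that the subspace of $C_c^\infty$ functions vanishing near $\overline{\R^2\setminus C_{\theta_0}}$ is dense in $\{u \in H : u\equiv 0 \text{ on } \R^2\setminus C_{\theta_0}\}$ (again by a cut-off and mollification argument as in Lemma~\ref{space-test-functions-dense}), so it is enough to treat smooth $u$ compactly supported inside the open cone.

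\emph{Part (2).} Fix $r>0$ and set $g(\theta):=u(r,\theta)$ for $\theta\in(-\theta_0,\theta_0)$, extended by $0$ outside. Then $g$ is smooth and vanishes at $\theta=\pm\theta_0$, and the sharp one-dimensional Poincaré inequality on an interval of length $2\theta_0$ for functions vanishing at both endpoints gives
$$
\int_{-\theta_0}^{\theta_0} |g(\theta)|^2\,d\theta \;\le\; \Bigl(\frac{2\theta_0}{\pi}\Bigr)^2 \int_{-\theta_0}^{\theta_0} |g'(\theta)|^2\,d\theta,
$$
the constant $\bigl(\frac{2\theta_0}{\pi}\bigr)^2$ being the reciprocal of the first Dirichlet eigenvalue $\bigl(\frac{\pi}{2\theta_0}\bigr)^2$ of $-\partial_{\theta}^2$ on $(-\theta_0,\theta_0)$. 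Multiplying by $r$ and integrating $dr$ over $(0,\infty)$, and using $|u|_2^2 = \int_0^\infty \int_{-\theta_0}^{\theta_0}|g|^2\,r\,d\theta\,dr$ and likewise $|\del_\theta u|_2^2=\int_0^\infty\int_{-\theta_0}^{\theta_0}|g'|^2 r\,d\theta\,dr$, yields $|u|_2 \le \frac{2\theta_0}{\pi}|\del_\theta u|_2$.

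\emph{Part (1).} Here $g(\theta):=u(r,\theta)$ is $2\pi$-periodic, so the relevant operator is $-\partial_\theta^2$ on the circle, whose spectrum is $\{0,1,1,4,4,\dots\}$. Writing $g = \bar g + (g-\bar g)$ with $\bar g := \frac{1}{2\pi}\int_{S^1} g$ (which equals $u^\#(r)$ at radius $r$), the mean-zero part satisfies the Wirtinger inequality $\int_{S^1}|g-\bar g|^2\,d\theta \le \int_{S^1}|g'|^2\,d\theta$ (first nonzero eigenvalue $=1$), while $g'=(g-\bar g)'$. Hence pointwise in $r$,
$$
\int_{S^1}|g|^2\,d\theta = \int_{S^1}|g-\bar g|^2\,d\theta + 2\pi|\bar g|^2 \le \int_{S^1}|g'|^2\,d\theta + 2\pi |\bar g|^2 .
$$
Multiplying by $r$ and integrating in $r$, and recognizing $\int_0^\infty 2\pi|\bar g(r)|^2 r\,dr = |u^\#|_2^2$, gives $|u|_2^2 \le |\del_\theta u|_2^2 + |u^\#|_2^2$. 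The ``in particular'' statement is the special case $u^\#\equiv 0$.

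\emph{Main obstacle.} The analytic content is elementary, so the only real care needed is the approximation/density step: I must make sure that, for a general $u \in H$, mollified or cut-off approximations converge in the $\|\cdot\|_\lambda$-norm (hence in both $|u|_2$ and $|\del_\theta u|_2$), and — crucially in part (1) — that the radial averaging $u\mapsto u^\#$ is continuous on $H$ (which is recorded in the text right before the lemma, via $\|u^\#\|_\lambda \le \|u\|_\lambda$) so that $|u^\#_n|_2\to|u^\#|_2$. For part (2) the subtlety is that one should not need $u$ to vanish in a \emph{neighborhood} of the complement of the cone, only a.e. on the complement; this is handled by approximating with functions supported in slightly smaller cones $C_{\theta_0-\eta}$ and letting $\eta\to 0$, the constant $\frac{2(\theta_0-\eta)}{\pi}$ passing to $\frac{2\theta_0}{\pi}$ in the limit. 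Everything else is a routine application of Fubini and the standard sharp 1D Poincaré constants.
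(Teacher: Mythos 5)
Part (1) of your argument is essentially the paper's proof: both decompose $u$ into its radial average $u^\#$ and the fiberwise mean-zero remainder, apply Wirtinger's inequality with constant $1$ to the remainder, and use the $L^2$-orthogonality of the two pieces; nothing to add there.

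For part (2) the analytic core --- the sharp one-dimensional Dirichlet--Poincar\'e constant $(2\theta_0/\pi)^2$ on an interval of length $2\theta_0$, integrated against $r\,dr$ --- is also the same, but the reduction to smooth functions is where you and the paper diverge, and it is the one place where your write-up has a genuine gap. You assert that smooth functions compactly supported \emph{inside} the open cone are dense in $\{u\in H:\ u\equiv 0 \ \text{a.e.\ on}\ \R^2\setminus C_{\theta_0}\}$ ``again by a cut-off and mollification argument as in Lemma~\ref{space-test-functions-dense}''. That lemma has no boundary to contend with; here mollification near $\partial C_{\theta_0}$ (and near the vertex) pushes the support outside the cone, so before mollifying you must first contract the support --- e.g.\ by an angular dilation $u_\eta(r,\theta)=u\bigl(r,\tfrac{\theta_0}{\theta_0-\eta}\,\theta\bigr)$ --- and verify that $|u_\eta|_2\to|u|_2$ and $|\del_\theta u_\eta|_2\to|\del_\theta u|_2$ as $\eta\to 0$. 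In effect you are invoking the identification of $\{u\equiv 0$ a.e.\ outside $C_{\theta_0}\}$ with the closure of test functions supported in $C_{\theta_0}$; this is true for these Lipschitz cones ($\theta_0<\pi$), but it requires an actual argument and you only gesture at it. The paper sidesteps the issue entirely by approximating from the \emph{outside}: it takes arbitrary approximants $u_n\to u$ in $C_c^\infty(\R^2)$, multiplies by a fixed cutoff $\rho(r)\psi(\theta)$ supported in a slightly \emph{larger} sector $|\theta|\le\theta'$ with $\theta'>\theta_0$ (and vanishing near the origin), applies the Dirichlet--Wirtinger inequality with the worse constant $2\theta'/\pi$ to $u_n\rho\psi$, passes to the limit using only the already established density of $C_c^\infty(\R^2)$ in $H$ together with the identity $u\psi=u$, and finally lets $\theta'\downarrow\theta_0$ and $r_0\downarrow 0$. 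That device costs nothing beyond Lemma~\ref{space-test-functions-dense} and avoids the interior-density lemma altogether; I would either adopt it or write out your contraction-plus-mollification step in full.
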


\begin{proof}
  (i) By Lemma~\ref{space-test-functions-dense}, it suffices to prove the claim for $u \in C^\infty_c(\R^2)$.

  We first assume that $u^\# \equiv 0$. In this case we have, in polar coordinates,
$$
|u|_{2}^2 = \int_{0}^\infty r \int_0^{2\pi} |u (r,\theta)|^2\,d\theta dr, 
$$
where the function $\theta \mapsto u(r,\theta)$ is $2\pi$-periodic and satisfies $\int_0^{2\pi} u(r,\theta)\,d\theta = 0$ for every $r>0$. Consequently, by {\it Wirtinger's inequality} for periodic functions,
$$
\int_0^{2\pi} |u (r,\theta)|^2\,d\theta \le \int_0^{2\pi} |\partial_\theta u (r,\theta)|^2\,d\theta \qquad \text{for every $r>0$,}
$$
which implies that 
$$
|u|_{2}^2 \le \int_{0}^\infty r \int_0^{2\pi} |\partial_\theta u (r,\theta)|^2\,d\theta dr = |\partial_\theta u|_2^2.
$$
If $u \in C^\infty_c(\R^2)$ is arbitrary, we may apply the above argument to the function $u-u^\#$. Since $(u- u^\#)^\#=0$ and $\langle u- u^\#, u^\# \rangle_{L^2(\R^2)} =0$, we get that 
$$
|u|_2^2 -|u^\#|_2^2 = |u-u^\#|_2^2 \le |\partial_\theta (u-u^\#)|_2^2 = |\partial_\theta u|_2^2,
$$
as claimed.

\medskip
(ii) Let $u \in H$ with $u \equiv 0$ on $\R^2 \setminus C_{\theta_0}$. By Lemma~\ref{space-test-functions-dense}, there exists a sequence $(u_n)_n$ in $C^\infty_c(\R^2)$ with $u_n \to u$.

\medskip
We fix $r_0>0$ and we let $\rho \in C^{\infty}([0,\infty))$ be a function with $0 \le \rho \le 1$, $\rho \equiv 0$ on $[0,r_0]$ and $\rho \equiv 1$ on $[2r_0,\infty)$. Moreover, we let $\theta' \in (\theta_0,\pi)$ and $\psi \in C^\infty_c(\R)$ be a function with $0 \le \psi \le 1$, $\psi \equiv 1$ in $[-\theta_0, \theta_0]$ and $\psi \equiv 0$ in $\R \setminus [-\theta',\theta']$. 
Next we define, in polar coordinates,
$$
\phi_0,\phi_1 \in L^\infty(\R^2) \cap C^\infty(\R^2), \qquad \phi_0(r,\theta)= \rho(r), \quad \phi_1(r,\theta) = \rho(r) \psi(\theta).
$$

Setting $v_n:= u_n \phi_1$ for $n \in \N$, it is then easy to see that 
\begin{equation}
  \label{eq:v-n-conv}
 v_n \to u \phi_1 = u \phi_0 \qquad \text{in $H$,}
\end{equation}
where the last equality follows since $u \equiv 0$ on $\R^2 \setminus C_{\theta_0}$. Moreover, we have, in polar coordinates,
$$
|v_n|_{2}^2 = \int_{0}^\infty r \int_{-\pi}^{\pi} |v_n (r,\theta)|^2\,d\theta dr,  
$$
where the function $\theta \mapsto v_n(r,\theta)$ is of class $C^1$ and satisfies $v_n(r,\theta)=0$ for $\theta \not \in [-\theta', \theta']$, $r>0$. Using again a classical Wirtinger type inequality (see section 1.7 in \cite{DYMMACKEAN1972}),  
$$
\int_{-\pi}^{\pi} |v_n (r,\theta)|^2\,d\theta \le \Bigl(\frac{2 \theta'}{\pi}\Bigr)^2 \int_{-\pi}^{\pi} |\partial_\theta v_n|^2 (r,\theta)\,d\theta \qquad \text{for every $r>0$,}
$$
which implies that 
\begin{equation}\label{eq:sequence-Wirtinger}
|v_n|_{2}^2 \le \Bigl(\frac{2 \theta'}{\pi}\Bigr)^2 \int_{0}^\infty r \int_{-\pi}^{\pi} |\partial_\theta v_n|^2 (r,\theta)\,d\theta dr = \Bigl(\frac{2 \theta'}{\pi}\Bigr)^2 |\partial_\theta v_n|_2^2
\end{equation}
for every $n \in \N$. 

\medskip
Using (\ref{eq:v-n-conv}), we may thus pass to the limit in \eqref{eq:sequence-Wirtinger} to obtain the inequality 
$$
|u \phi_0 |_{2}^2 \le \Bigl(\frac{2 \theta'}{\pi}\Bigr)^2 |\phi_0 \partial_\theta u |_2^2,
$$
which yields that 
$$
\|u\|_{L^2(\R^2 \setminus B_{2r_0}(0))} \le \frac{2\theta'}{\pi} 
\|\partial_\theta u\|_{L^2(\R^2)}.
$$
Since $r_0> 0$ and $\theta' >\theta_0$ were chosen arbitrarily, the claim follows.\end{proof}

Next we note embedding properties of the space $H$.

\begin{lemma}
\label{lemma2}
For every $\lambda>0$, $(H,\langle \cdot , \cdot \rangle_\lambda)$ is a Hilbert space canonically embedded in $H^1(\R^2)$. Moreover, $H$ is compactly embedded in $L^\rho(\R^2)$ 
for all $\rho \in (2,\infty)$.
\end{lemma}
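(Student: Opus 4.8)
The plan is to prove the two assertions separately, the compactness being the substantial part.

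\emph{Hilbert space and embedding into $H^1(\R^2)$.} Since $\|u\|_{H^1(\R^2)}\le\|u\|_\lambda$ for every $u\in H$, the inclusion $H\to H^1(\R^2)$ is a norm--nonincreasing, hence continuous, linear injection. For completeness of $(H,\|\cdot\|_\lambda)$ I would argue as usual: a $\|\cdot\|_\lambda$--Cauchy sequence $(u_n)$ is Cauchy in $H^1(\R^2)$ and, through its component $\del_\theta u_n$, in $L^2(\R^2)$; writing $u=\lim u_n$ in $H^1(\R^2)$ and $g=\lim\del_\theta u_n$ in $L^2(\R^2)$, one identifies $g=\del_\theta u$ in the distributional sense using that the vector field $(-x_2,x_1)$ generating $\del_\theta$ is divergence free, so that $\int_{\R^2}(\del_\theta u_n)\varphi\,dx=-\int_{\R^2}u_n\,(\del_\theta\varphi)\,dx$ for all $\varphi\in C_c^\infty(\R^2)$, and letting $n\to\infty$. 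Hence $u\in H$ and $u_n\to u$ in $\|\cdot\|_\lambda$.

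\emph{Reduction of the compactness statement to a single exponent.} Since $H\embed H^1(\R^2)\embed L^\rho(\R^2)$ continuously for every $\rho\in[2,\infty)$ by the two--dimensional Sobolev embedding, it suffices to prove that $H\embed L^4(\R^2)$ is \emph{compact}. Indeed, given $\rho\in(2,\infty)$ and a sequence $u_n\weak 0$ in $H$, the convergence $u_n\to 0$ in $L^4(\R^2)$ together with the uniform bound of $(u_n)$ in $L^2(\R^2)$ (if $\rho<4$) or in some $L^{\rho'}(\R^2)$ with $\rho'>\rho$ (if $\rho>4$) yields $u_n\to 0$ in $L^\rho(\R^2)$ by interpolation of $L^\rho$ norms.

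\emph{Compactness of $H\embed L^4(\R^2)$.} Let $(u_n)$ be bounded in $H$; after passing to a subsequence and subtracting the weak limit I may assume $u_n\weak 0$ in $H$, hence in $H^1(\R^2)$. By the Rellich theorem on balls, $u_n\to 0$ in $L^4_{\loc}(\R^2)$, so it remains to establish uniform tightness at infinity, $\sup_n\int_{|x|>R}|u_n|^4\,dx\to 0$ as $R\to\infty$. I would split $u_n=u_n^\#+w_n$, $w_n:=u_n-u_n^\#$; by $\|u^\#\|_\lambda=\|u^\#\|_{H^1}\le\|u\|_{H^1}\le\|u\|_\lambda$ both sequences are bounded in $H$, and $\int_0^{2\pi}w_n(r,\theta)\,d\theta=0$ for a.e.\ $r>0$. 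For the radial part, Strauss' radial lemma in $\R^2$ gives $|u_n^\#(x)|\le C|x|^{-1/2}\|u_n^\#\|_{H^1}$ for $|x|\ge 1$, hence $\int_{|x|>R}|u_n^\#|^4\,dx\le C^2\|u_n^\#\|_{H^1}^2\,R^{-1}\|u_n^\#\|_{L^2}^2\lesssim R^{-1}\|u_n\|_\lambda^4$. For the angular part I would use two one--dimensional estimates on the circle $S^1_r$: the Wirtinger/Sobolev bound $\|w_n(r,\cdot)\|_{L^\infty(S^1)}\le c\,\|\del_\theta w_n(r,\cdot)\|_{L^2(S^1)}$ (valid since $w_n(r,\cdot)$ has zero mean), and the elementary slice estimate $\|w_n(r,\cdot)\|_{L^2(S^1)}^2\le r^{-1}\|w_n\|_{H^1(\R^2)}^2$ for a.e.\ $r>0$, which holds for \emph{any} $H^1(\R^2)$ function and is proved exactly like the radial Strauss lemma applied to $r\mapsto\|w_n(r,\cdot)\|_{L^2(S^1)}^2$ (using $\del_r$ and the weighted Hardy bound $\int_r^\infty s^{-1}\|w_n(s,\cdot)\|_{L^2(S^1)}^2\,ds\le r^{-2}\|w_n\|_{L^2(\R^2)}^2$). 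Combining them,
\begin{align*}
\int_{|x|>R}|w_n|^4\,dx
&=\int_R^\infty\|w_n(r,\cdot)\|_{L^4(S^1)}^4\,r\,dr
\le\int_R^\infty\|w_n(r,\cdot)\|_{L^\infty(S^1)}^2\,\|w_n(r,\cdot)\|_{L^2(S^1)}^2\,r\,dr\\
&\le c^2\Bigl(\sup_{r>R}\|w_n(r,\cdot)\|_{L^2(S^1)}^2\Bigr)\int_0^\infty\|\del_\theta w_n(r,\cdot)\|_{L^2(S^1)}^2\,r\,dr
\le\frac{c^2}{R}\,\|w_n\|_{H^1(\R^2)}^2\,\|\del_\theta w_n\|_{L^2(\R^2)}^2\lesssim\frac{\|u_n\|_\lambda^4}{R}.
\end{align*}
Adding the two contributions gives $\sup_n\int_{|x|>R}|u_n|^4\,dx\lesssim R^{-1}\to 0$, which together with the local convergence proves $u_n\to 0$ in $L^4(\R^2)$.

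\emph{Main difficulty.} The crux, and the only step where the structure of $H$ (rather than of $H^1(\R^2)$) is essential, is the tightness of the angular part at infinity: a sequence bounded merely in $H^1(\R^2)$ can lose mass at infinity, and it is precisely the uniform finiteness of $\int_{\R^2}|\del_\theta u|^2$, together with the angular Wirtinger inequality turning $\|\del_\theta w_n(r,\cdot)\|_{L^2(S^1)}$ into a bound for the circle sup--norm, that prevents this. The exponent $4$ is chosen so that the weights match: one factor $|w_n|^2$ produces $\|\del_\theta w_n(r,\cdot)\|_{L^2(S^1)}^2$, which is then integrated against $r\,dr$ to recover exactly the finite quantity $\|\del_\theta w_n\|_{L^2(\R^2)}^2$, while the other factor only needs the crude $r^{-1}$ decay of the $L^2$--mean on circles. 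Everything else — completeness, the Sobolev and Rellich ingredients, the interpolation — is routine.
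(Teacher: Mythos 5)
Your proof is correct, but it takes a genuinely different route from the paper. The paper argues by contradiction via concentration--compactness: if $u_n\weak 0$ in $H$ but $u_n\not\to 0$ in $L^\rho$, Lions' lemma and Rellich's theorem produce translations $x^n$ with $|x^n|\to\infty$ along which $v_n=u_n(\cdot+x^n)\weak v\neq 0$; the uniform bound on $\lambda^{-2}\int|\del_\theta u_n|^2$, rescaled by $|x^n|^2$, then forces the directional derivative $a\,\partial_{x_2}v-b\,\partial_{x_1}v$ to vanish identically, whence $v\equiv 0$, a contradiction. You instead give a direct, quantitative tightness estimate: splitting $u_n$ into its radial average and the zero-mean remainder, you control the tail of the radial part by Strauss' lemma and the tail of the oscillating part by the circle-wise Wirtinger/sup-norm bound $\|w_n(r,\cdot)\|_{L^\infty(S^1)}\le c\|\del_\theta w_n(r,\cdot)\|_{L^2(S^1)}$ combined with the slice decay $\|w_n(r,\cdot)\|_{L^2(S^1)}^2\le r^{-1}\|w_n\|_{H^1}^2$, obtaining $\sup_n\int_{|x|>R}|u_n|^4\,dx\lesssim R^{-1}$; Rellich on balls and interpolation finish the job. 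Both arguments exploit the same structural fact --- that finiteness of $\int|\del_\theta u|^2$ prevents mass from escaping to infinity except along the radial average, which is itself compactly embedded by Strauss --- but your version is more elementary (no Lions' lemma) and yields an explicit uniform decay rate of the $L^4$ tail on bounded subsets of $H$, at the cost of the exponent-by-exponent interpolation step, whereas the paper's soft argument treats all $\rho\in(2,\infty)$ simultaneously. The reduction to $\rho=4$, the slice estimate, and the identification of the limiting angular derivative in your completeness argument are all sound.
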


\begin{proof}
We have 
$$
\|u\|_{H^1(\R^2)} \le  \|u\|_\lambda \qquad \text{for all $\lambda>0$, $v \in H$,}
$$
which implies that $H$ is a Hilbert space contained in $H^1(\R^2)$. By standard Sobolev embeddings, $H$ is thus embedded in $L^\rho(\R^2)$ 
for all $\rho \in [2,\infty)$. It remains to show that these embeddings are compact for $\rho>2$. 

Let $(u_n)_n$ be a sequence in $H$ with $u_n \weak 0$ in $H$, and suppose by contradiction that $u_n \not \to 0$ in $L^\rho(\R^2)$ for some $\rho>2$.

Since, $u_n \weak 0$ in $H^1(\R^2) $, it follows from Lions' Lemma \cite[Lemma I.1]{Lions} and Rellich's Theorem that, after passing to a subsequence, there exists a sequence 
$x^n \in \R^2$ with $|x^n| \to \infty$ and such that 
\begin{equation}
  \label{eq:w_n-w-weak-limit}
v_n \weak v \not = 0 \qquad \text{in $H^1(\R^2)$} 
\end{equation}
for the functions $v_n \in H^1(\R^2)$, $v_n= u_n( \cdot + x^n)$.  

\medskip
Let $r_n := |x^n|$. Passing to a subsequence, we may assume that the limits
$$
a := \lim \limits_{n \to \infty} \frac{x_1^n}{r_n}, \qquad  b: = \lim \limits_{n \to \infty} \frac{x_2^n}{r_n}
$$
exist, whereas $a^2+b^2=1$. For every $R>0$, we then have    
$$
 \begin{aligned}
\lambda^2 &\|u_n\|_{\lambda}^2 \ge   \int_{\R^2_+}|x_1 \partial_{x_2}u_n- x_2 \partial_{x_1}u_n|^2  dx =  \int_{\R^2}|(x_1+x_1^n) \partial_{x_2}v_n- (x_2+x_2^n) \partial_{x_1}v_n|^2  dx\\
\ge &  \int_{B_{R}(0)}|(x_1+x_1^n) \partial_{x_2}v_n- (x_2+x_2^n) \partial_{x_1}v_n|^2  dx\\
=&  r_n^2 \int_{B_{R}(0)}\left|\frac{x_1+x_1^n}{r_n} \partial_{x_2}v_n - \frac{x_2+x_2^n}{r_n} \partial_{x_1}v_n\right|^2  dx\\
\ge&  r_n^2 \Bigl( \int_{B_{R}(0)}|a \partial_{x_2}v_n - b\partial_{x_1}v_n|^2  dx\\
&- \sup_{x \in B_R(0)}\Bigl|\frac{x_1+x_1^n}{r_n}-a\Bigr| \|\partial_{x_2}v_n\|_{L^2(B_R(0))}^2 - \sup_{x \in B_R(0)}\Bigl|\frac{x_2+x_1^n}{r_n}-b\Bigr| \|\partial_{x_1}v_n\|_{L^2(B_R(0))}^2 \Bigr)\\
\ge&  r_n^2 \Bigl( \int_{B_{R}(0)}|a \partial_{x_2}v_n - b\partial_{x_1}v_n|^2  dx +o(1) \Bigr) \ge   r_n^2 \Bigl( \int_{B_{R}(0)}|a \partial_{x_2}v - b\partial_{x_1}v|^2  dx + o(1)\Bigr), 
\end{aligned}
$$
where in the last step we used the fact that
$$
a \partial_{x_2}v_n - b\partial_{x_1}v_n \;\weak \; a \partial_{x_2}v - b\partial_{x_1}v \qquad \text{in $L^2(B_R(0))$}
$$
and the weak lower semicontinuity of the $L^2$-norm. The boundedness of $(u_n)_n$ in $H$ now implies that
$$
\int_{B_{R}(0)}[a \partial_{x_2}v - b\partial_{x_1}v]^2  dx = 0 \qquad \text{for every $R>0$,}
$$
and thus 
\begin{equation}
  \label{eq:compactness-eq}
\int_{\R^2}|a \partial_{x_2}v - b\partial_{x_1}v|^2  dx  =0 .
\end{equation}
Since $a^2 + b^2 = 1$, if we had $a=0$ or $b=0$ it would follow that 
$$
\int_{\R^2} |\partial_{x_2}v|^2\,dx = 0\quad \text{or}\quad  \int_{\R^2} |\partial_{x_1}v|^2\,dx.
$$
The fact that $v\in L^2(\R^2)$ would imply $v \equiv 0$, contradicting \eqref{eq:w_n-w-weak-limit}. If, on the other hand, $a,b\neq 0$, \eqref{eq:compactness-eq} implies that 
$\del_{x_1} v = \frac{a}{b} \del_{x_2} v$ in $L^2(\R^2)$. Thus $v$ satisfies $\del_\beta v =0$ with $\beta = (1, - \frac{a}{b})$, which again implies $v \equiv 0$ and thus contradicts \eqref{eq:w_n-w-weak-limit}. The proof is finished.
\end{proof}

	\begin{lemma} \label{noncompactness-L-2}
		The embedding $H \embed L^2(\R^2)$ is not compact.
\end{lemma}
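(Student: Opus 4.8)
The plan is to exhibit a sequence $(u_n)_n$ bounded in $(H,\langle\cdot,\cdot\rangle_\lambda)$ which has no subsequence converging in $L^2(\R^2)$; the mechanism is loss of mass at infinity. The subtlety is that the naive choice — translating a fixed bump function off to infinity along a ray — fails, because for a function supported near a point at distance $r$ from the origin one has $|\partial_\theta u|_2 \gtrsim r\,|\nabla u|_2$, so the translated sequence leaves $H$ entirely. I avoid this by working with \emph{radial} functions, on which $\partial_\theta$ vanishes identically, and letting the bump travel outward in the radial variable.

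Concretely, fix $g \in C_c^\infty(\R)$ with $g \not\equiv 0$ and $\supp g \subset [-1,1]$, and for integers $n \ge 2$ define
\[
u_n \in C_c^\infty(\R^2), \qquad u_n(x) := n^{-\frac12}\, g(|x|-n).
\]
This is genuinely smooth because $\supp u_n$ is contained in the annulus $\{\,n-1\le |x|\le n+1\,\}$, which stays away from the origin. Since $u_n$ is radial, $\partial_\theta u_n \equiv 0$, hence $\|u_n\|_\lambda = \|u_n\|_{H^1(\R^2)}$ for every $\lambda>0$. The first step is a one-line computation in polar coordinates: the normalization $n^{-1/2}$ is chosen exactly to compensate the factor $r\,dr \approx n\,dr$ in the area element on $\supp u_n$, so that $|u_n|_2^2 \to 2\pi|g|_{L^2(\R)}^2$ and $|\nabla u_n|_2^2 \to 2\pi|g'|_{L^2(\R)}^2$ as $n\to\infty$. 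In particular $(u_n)_n$ is bounded in $H$ while $|u_n|_2^2 \to c := 2\pi|g|_{L^2(\R)}^2 > 0$.

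The second step is immediate: for each fixed $R>0$ we have $u_n\equiv 0$ on $B_R(0)$ as soon as $n > R+1$, so if a subsequence $u_{n_k}$ converged to some $w$ in $L^2(\R^2)$, then $w$ would vanish on every ball, forcing $w\equiv 0$ and hence $|u_{n_k}|_2 \to 0$, contradicting $|u_{n_k}|_2^2\to c>0$. Thus no subsequence of $(u_n)_n$ converges in $L^2(\R^2)$, and the embedding $H\embed L^2(\R^2)$ is not compact. There is no real obstacle in this argument; the only point requiring a line of care is the choice of normalization. It is worth noting that this is fully consistent with Lemma~\ref{lemma2}: the very same radial construction does \emph{not} produce a counterexample in $L^\rho$ for $\rho>2$, since there the normalization keeping $|\nabla u_n|_2$ bounded forces $|u_n|_\rho \to 0$.
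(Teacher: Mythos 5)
Your proof is correct and follows essentially the same route as the paper: both use radial annular bumps $g(|x|-n)$ traveling to infinity, normalized (you by $n^{-1/2}$, the paper by $|x|^{-1/2}$) so that the $H$-norm stays bounded while the $L^2$-norm stays bounded away from zero. The radiality observation that kills the $\partial_\theta$ term and your explicit check that no subsequence converges are both sound.
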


\begin{proof}
Let $\psi \in C^\infty_c((1,2)) \setminus \{ 0\}$.  After trivially extending $\psi$ to $\R$, for $n \in \N$ consider the functions
		$$
		u_n(r,s)=\frac{1}{\sqrt{r}} \psi(r-n)
		$$
		so that
		$$
		\supp \, u_n \subset \left\{ x \in \R_+^2: n+1 < |x| < n + 2 \right\} .
		$$
		Clearly, $u_n \weakto 0$ in $H$, but 
		\begin{align*}
		|u_n|_2^2 & =  2\pi \int_0^\infty  \psi(r-n)^2  \, dr
		= 2\pi \int_0^\infty  \psi(r)^2  \, dr  >0
		\end{align*}
		so $u_n \not \to 0$ in $L^2(\R^2)$. 
	\end{proof}

        In the following, we fix $p >2$ and $q=1$ in (\ref{eq:untransformed-problem whole space}), i.e., we consider the equation
\begin{equation}
\label{eq:untransformed-problem whole space:q=1}
\left \{
\begin{aligned}
-\Delta u - \frac{1}{\lambda^2} [x_1 \partial_{x_2} - x_2 \partial_{x_1}]^2 u + u &= |u|^{p-2} u &&\qquad \text{on $\R^2$,}\\
u(x) & \to 0 &&   \qquad \text{as $|x| \to \infty$.}
\end{aligned}
\right.  
\end{equation}
Here and in what follows, for a given $\lambda>0$, a function $u \in H$ will be called a weak solution of (\ref{eq:untransformed-problem whole space:q=1}) if 
$$
\langle u,v \rangle_\lambda = \int_{\R^2}|u|^{p-2}u v \,dx \qquad \text{for all $v \in H$.}
$$

As a consequence of Lemma~\ref{lemma2} and standard arguments in the calculus of variations, we see that for $\lambda>0$, the energy functional 
$$
E_\lambda: H \to \R, \quad
E_\lambda(u) := \frac{1}{2}\|u\|_\lambda^2-
\frac{1}{p}\int_{\R^2} |u|^p \,dx
$$
is of class $C^1$ and critical points of $E_\lambda$ are weak solutions of (\ref{eq:untransformed-problem whole space:q=1}). 

\medskip
We note the following uniform boundedness property of weak solutions of (\ref{eq:untransformed-problem whole space}). 

\begin{lemma} \label{L-infty regularity}
Fix $\lambda>0$ and let $u \in H$ be a weak solution of 
\begin{equation}
  \label{eq:L-infty-regularity-equation} 
	-\Delta u - \frac{1}{\lambda^2} \del_\theta^2 u + u = |u|^{p-2} u \quad \text{in $\R^2$}.
\end{equation}
	Then $u \in L^\infty(\R^2)$. Moreover, there exist constants $\sigma, C>0$, depending on $p>2$ but not on $u$ and $\lambda$, such that 
	\begin{equation} 
	|u|_\infty \leq C \|u\|_{H^1(\R^2)}^\sigma . \label{moser-ineq}
	\end{equation}
\end{lemma}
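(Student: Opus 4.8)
The plan is to establish the $L^\infty$-bound via a Moser-type iteration applied to the equation \eqref{eq:L-infty-regularity-equation}, treating the additional term $-\frac{1}{\lambda^2}\del_\theta^2 u$ as a benign contribution. The key observation is that $\del_\theta = x_1 \del_{x_2} - x_2 \del_{x_1}$ generates rotations, so the bilinear form $u \mapsto \int_{\R^2} |\del_\theta u|^2\,dx$ is a nonnegative quadratic form; when we test the equation against $u|u|^{2(\beta-1)}$ (or a truncated version thereof), the $\del_\theta^2$-term produces, after integration by parts, a term of the form $\frac{2\beta-1}{\lambda^2}\int_{\R^2} |u|^{2(\beta-1)}|\del_\theta u|^2\,dx \ge 0$, which has a favorable sign and can simply be discarded. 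Thus the standard elliptic iteration estimate that works for $-\Delta u + u = |u|^{p-2}u$ on $\R^2$ goes through essentially verbatim, and crucially the resulting constants do not see $\lambda$ at all (they only depend on the discarded term having the right sign).

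First I would note that since $u \in H \subset H^1(\R^2)$ and $p > 2$, by the Sobolev embedding $u \in L^\rho(\R^2)$ for all $\rho \in [2,\infty)$, so the right-hand side $|u|^{p-2}u$ lies in $L^\rho_{loc}$ for every finite $\rho$; in particular one first gets $u \in W^{2,\rho}_{loc}$ by standard elliptic regularity applied to $-\Delta u = f$ with $f = |u|^{p-2}u - u + \frac{1}{\lambda^2}\del_\theta^2 u$ — but this requires care since $\del_\theta^2 u$ involves second derivatives with unbounded coefficients. A cleaner route is to work purely at the level of weak formulations: fix $M > 0$, set $u_M := \min\{|u|, M\}$, and use $\varphi = u\, u_M^{2(\beta-1)}$ as a test function (this is admissible in $H$ after a routine approximation argument because it is a bounded Lipschitz function of $u$ times $u$, hence in $H^1 \cap L^\infty$ with controllable angular derivative). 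Expanding $\langle u, \varphi\rangle_\lambda$, the gradient term gives the usual $\gtrsim \beta^{-1}\int |\nabla(u\, u_M^{\beta-1})|^2$ after the standard algebraic manipulation, the $\del_\theta$ term gives a nonnegative quantity that we drop, the zeroth-order term contributes $\int u^2 u_M^{2(\beta-1)} \ge 0$, and the right side is bounded by $\int |u|^p u_M^{2(\beta-1)}$.

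Next I would run the iteration: using the two-dimensional Gagliardo–Nirenberg–Sobolev inequality (for any exponent, since $N=2$) applied to $w_\beta := u\, u_M^{\beta-1}$, together with Hölder to absorb the factor $|u|^{p-2}$ (this is where one uses that $u$ is already known to be in every $L^\rho$ with a quantitative bound controlled by $\|u\|_{H^1}$ — here one invokes an interpolation/bootstrap starting from $L^{2^*}$-type control, but in $\R^2$ one just uses that $\|u\|_{L^\rho} \le C_\rho \|u\|_{H^1}$ directly), one obtains a recursive inequality of the form $\|u\|_{L^{\gamma\beta_k}} \le (C\beta_k)^{1/\beta_k}\|u\|_{L^{\beta_k}}$ with $\beta_{k+1} = \gamma \beta_k$ for a fixed $\gamma > 1$. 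Letting $M \to \infty$ by monotone convergence at each stage and then iterating $k \to \infty$, the product $\prod_k (C\beta_k)^{1/\beta_k}$ converges, yielding $|u|_\infty \le C \|u\|_{L^{\beta_0}}^{?}$; starting the iteration from $\beta_0$ chosen appropriately (e.g. so that the first estimate only needs $u \in H^1$) and tracking the exponent through the geometric iteration produces $|u|_\infty \le C\|u\|_{H^1(\R^2)}^\sigma$ for some $\sigma = \sigma(p) > 0$, with $C = C(p)$ independent of $u$ and $\lambda$.

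The main obstacle is ensuring the test function $\varphi = u\, u_M^{2(\beta-1)}$ is genuinely admissible in $H$ (so that the weak formulation may be applied to it) and that the integration by parts in the $\del_\theta$-term is rigorous, i.e. that $\langle u, u\, u_M^{2(\beta-1)}\rangle_\lambda$ really equals the expanded expression with the nonnegative angular piece. This is handled by first proving the estimate for $u \in C_c^\infty(\R^2)$-dense approximations (using Lemma~\ref{space-test-functions-dense}) or, more carefully, by noting that $\del_\theta$ obeys a product rule, that $\del_\theta u_M = \chi_{\{|u|<M\}}\,\mathrm{sgn}(u)\,\del_\theta u$ a.e., and that all resulting integrals are finite because $u_M$ is bounded and $\del_\theta u \in L^2$. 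A secondary but routine point is making the $\lambda$-independence of all constants explicit throughout; this follows automatically once one commits to discarding the angular term at every step rather than estimating it, and also uses that the $L^\rho$-bounds $\|u\|_{L^\rho} \le C_\rho\|u\|_{H^1}$ invoked in the Hölder step come from the plain Sobolev embedding $H^1(\R^2)\hookrightarrow L^\rho(\R^2)$, which has no $\lambda$-dependence.
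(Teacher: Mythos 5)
Your proposal is correct and follows essentially the same route as the paper's appendix proof: a Moser iteration in which one tests with truncated powers of $u$ (your $u\,u_M^{2(\beta-1)}$ plays the role of the paper's $g(w)$ built from $h'(\tau)=s\min\{\tau^{s-1},L^{s-1}\}$), discards the nonnegative angular term to obtain $\lambda$-independent constants, absorbs the factor $|u|^{p-2}$ by H\"older together with the Sobolev embedding $H^1(\R^2)\hookrightarrow L^\rho(\R^2)$, and passes $M\to\infty$ before iterating geometrically. The only cosmetic difference is that the paper runs the argument for $w=u^+$ and then replaces $u$ by $-u$, whereas you work with $|u|$ directly; this does not change the substance.
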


The fact that the constants $C$ and $\sigma$ in (\ref{moser-ineq}) do not depend on $\lambda$ is of key importance in the proofs of Theorem~\ref{main theorem}(i) and Theorem~\ref{non-radiality-L-infty-est-intro}(ii).  The proof of Lemma~\ref{L-infty regularity} follows by a Moser iteration scheme based on uniform estimates which do not depend on $\lambda>0$. We include the details in the appendix, see Lemma \ref{app: regularity} below.

\begin{remark}{\rm 
  \label{remark-l-infty-H-functions}
  If $f: \R \to \R$ is a $C^1$-function with $f(0)=0$ and $u \in H \cap L^\infty(\R^2)$, it is easy to see that also $f(u)= f \circ u \in H \cap L^\infty(\R^2)$ with
  $\nabla f(u) = f'(u) \nabla u$ and $\del_\theta f(u) = f'(u) \del_\theta u$.

By Lemma~\ref{L-infty regularity}, this observation applies, in particular, to weak solutions $u \in H$ of (\ref{eq:L-infty-regularity-equation}).}
\end{remark}

\medskip
Next we note that every nontrivial solution of \eqref{eq:untransformed-problem whole space:q=1} is contained in the Nehari manifold
$$
\cN_{\lambda} := \{ u \in H \setminus \{ 0 \} : E_\lambda'(u) u = 0 \} .
$$
Let 
\begin{equation} \label{mountain pass value}
\alpha_\lambda := \inf \limits_{u \in \cN_{\lambda}} E_\lambda(u)>0 ,
\end{equation}
then every minimizer is a critical point and hence a solution (cf. \cite{Szulkin-Weth} and Theorem~\ref{exist-mountain-pass-general} below). It is easy to see that such a minimizer is positive and thus radial by Theorem~\ref{symmetry of positive solutions}. Therefore, $\alpha=\alpha_{\lambda}$ does not depend on $\lambda$.

\medskip
Hence we now focus on sign-changing solutions. Consider
     \begin{align*} 
     \cM_\lambda  & := \left\{ u \in H: u^+ \not \equiv 0, u^- \not \equiv 0, \ E_\lambda'(u)u^+=E_\lambda'(u)u^-=0 \right\} \\
     & = \left\{ u \in H\setminus\{0\}: u^+, u^- \in \cN_{\lambda}\right\}
     \end{align*} 
     and set
     \begin{equation} \label{nodal minimal energy}
     \beta_\lambda := \inf_{ u \in \cM_\lambda} E_\lambda(u) .
     \end{equation}

\begin{proposition}\label{prop:beta-lambda}
The value $\beta_\lambda$ is positive. Moreover, every minimizer $u \in \cM_\lambda$ of (\ref{nodal minimal energy}) is a critical point of $E_\lambda$ and hence a sign-changing solution of \eqref{eq:untransformed-problem whole space:q=1}.
\end{proposition}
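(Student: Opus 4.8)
The plan is to argue as in the now-standard treatment of least energy nodal solutions (see \cite{Szulkin-Weth} and the references therein), adapting it to the $\lambda$-dependent functional $E_\lambda$; the only compactness needed is the embedding $H\embed L^p(\R^2)$ from Lemma~\ref{lemma2}. Two structural facts drive everything. For $u\in H$ the functions $\nabla u^{+},\nabla u^{-}$ and $\del_\theta u^{\pm}=(\del_\theta u)\,\mathbf 1_{\{\pm u>0\}}$ have essentially disjoint supports, so $\langle u^{+},u^{-}\rangle_\lambda=0$, $|u|_p^p=|u^{+}|_p^p+|u^{-}|_p^p$, and for all $s,t\ge0$
$$
E_\lambda(su^{+}+tu^{-})=\Bigl(\tfrac{s^2}{2}\|u^{+}\|_\lambda^2-\tfrac{s^p}{p}|u^{+}|_p^p\Bigr)+\Bigl(\tfrac{t^2}{2}\|u^{-}\|_\lambda^2-\tfrac{t^p}{p}|u^{-}|_p^p\Bigr).
$$
Each summand, as a function of one variable on $[0,\infty)$, vanishes at $0$, increases and then strictly decreases to $-\infty$, with a unique maximum at a positive value; hence whenever $v^{+}\not\equiv0\not\equiv v^{-}$ the map $(s,t)\mapsto E_\lambda(sv^{+}+tv^{-})$ has a unique maximum over $[0,\infty)^2$, attained at a point $(s_v,t_v)\in(0,\infty)^2$ which is characterized by $s_vv^{+}+t_vv^{-}\in\cM_\lambda$. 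Applying this to any sign-changing $\varphi\in C_c^\infty(\R^2)$ shows $\cM_\lambda\neq\emptyset$, and for every $u\in\cM_\lambda$ one has $u^{\pm}\in\cN_\lambda$ and $E_\lambda(u)=E_\lambda(u^{+})+E_\lambda(u^{-})\ge2\alpha_\lambda>0$ by \eqref{mountain pass value}; thus $\beta_\lambda\ge2\alpha_\lambda>0$, which is the first assertion.

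Now let $u\in\cM_\lambda$ with $E_\lambda(u)=\beta_\lambda$ and assume, for contradiction, that $E_\lambda'(u)\neq0$. Since $w\mapsto w^{\pm}$ is $1$-Lipschitz from $H$ into $L^2(\R^2)$ and $E_\lambda'$ is continuous, we may pick $\delta_0>0$ so small that every $v$ in the $H$-ball $B_{2\delta_0}(u)$ satisfies $v^{+}\not\equiv0\not\equiv v^{-}$ and $\|E_\lambda'(v)\|_{H'}\ge\mu$ for some $\mu>0$. Put $g(s,t):=su^{+}+tu^{-}$ and $\phi:=E_\lambda\circ g$; by the above, $(1,1)$ is the strict maximum point of $\phi$ on $[0,\infty)^2$ with $\phi(1,1)=\beta_\lambda$, so we can fix $\varrho\in(0,1)$ so small that, writing $D:=[1-\varrho,1+\varrho]^2$, we have $g(D)\subset B_{\delta_0}(u)$ and $\max_{\partial D}\phi=\beta_\lambda-2\kappa$ for some $\kappa>0$. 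Applying a quantitative deformation lemma (e.g. \cite[Lemma~2.3]{Willem}) to $E_\lambda$ on $B_{2\delta_0}(u)$ at the level $\beta_\lambda$, with $\varepsilon:=\min\{\kappa,\mu\delta_0/8\}$, we obtain a continuous map $\eta:H\to H$ with: $\eta(v)=v$ whenever $|E_\lambda(v)-\beta_\lambda|\ge2\varepsilon$ or $v\notin B_{2\delta_0}(u)$; $E_\lambda(\eta(v))\le E_\lambda(v)$ for all $v$; $E_\lambda(\eta(v))\le\beta_\lambda-\varepsilon$ whenever $v\in B_{\delta_0}(u)$ and $E_\lambda(v)\le\beta_\lambda+\varepsilon$; and $\|\eta(v)-v\|_\lambda\le\delta_0$. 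Set $h:=\eta\circ g$ on $D$. Then $h(D)\subset B_{2\delta_0}(u)$, so $h(s,t)^{\pm}\not\equiv0$ on $D$; a short case distinction using $\phi\le\beta_\lambda$ on $D$ gives $\max_D(E_\lambda\circ h)\le\beta_\lambda-\varepsilon<\beta_\lambda$; and on $\partial D$ we have $\phi\le\beta_\lambda-2\kappa\le\beta_\lambda-2\varepsilon$, so $h=g$ there.

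It remains to locate a point of $\cM_\lambda$ on the deformed square. Define $\Psi:D\to\R^2$ by $\Psi(s,t):=\bigl(E_\lambda'(h(s,t))\,h(s,t)^{+},\,E_\lambda'(h(s,t))\,h(s,t)^{-}\bigr)$; by the disjoint-support identities, $E_\lambda'(h)\,h^{\pm}=\|h^{\pm}\|_\lambda^2-|h^{\pm}|_p^p$, so $\Psi(s,t)=0$ holds precisely when $h(s,t)^{\pm}\in\cN_\lambda$, i.e. when $h(s,t)\in\cM_\lambda$. On $\partial D$, using $h=g$ and $u^{\pm}\in\cN_\lambda$, one computes $\Psi(s,t)=\bigl(s^2\|u^{+}\|_\lambda^2(1-s^{p-2}),\,t^2\|u^{-}\|_\lambda^2(1-t^{p-2})\bigr)$; since $p>2$, the first component of $\Psi$ is positive on $\{s=1-\varrho\}$ and negative on $\{s=1+\varrho\}$, and the second component is positive on $\{t=1-\varrho\}$ and negative on $\{t=1+\varrho\}$. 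By the Poincar\'e--Miranda theorem (equivalently, by a Brouwer degree computation) there is $(s_0,t_0)\in D$ with $\Psi(s_0,t_0)=0$, hence $h(s_0,t_0)\in\cM_\lambda$ while $E_\lambda(h(s_0,t_0))\le\beta_\lambda-\varepsilon<\beta_\lambda=\inf_{\cM_\lambda}E_\lambda$, a contradiction. Therefore $E_\lambda'(u)=0$, and since $u^{+}\not\equiv0\not\equiv u^{-}$, $u$ is a sign-changing weak solution of \eqref{eq:untransformed-problem whole space:q=1}.

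The main obstacle is the interplay between the deformation and the topological step: one has to calibrate the constants so that $\eta$ is the identity on $g(\partial D)$ while strictly lowering the energy near $u$, and one must ensure that the deformed path $h$ still consists of sign-changing functions (both are arranged by taking $\delta_0$ and $\varrho$ small, using that $B_{2\delta_0}(u)$ contains only functions with nontrivial positive and negative parts). The degree/Miranda step itself is then elementary because the sign pattern of $\Psi|_{\partial D}$ is explicit, coming from $p>2$ and $u^{\pm}\in\cN_\lambda$. The routine points — that $w\mapsto w^{\pm}$ and $w\mapsto\del_\theta w^{\pm}$ act boundedly and continuously on $H$, and the one-variable minimax lemma on $[0,\infty)^2$ — should be spelled out but offer no difficulty.
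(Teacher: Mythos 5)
Your argument is correct and is essentially the paper's own proof: the paper simply cites Proposition 3.1 of \cite{Bartsch-Weth-Willem}, and what you have written out (the orthogonal splitting $E_\lambda(su^++tu^-)$, the bound $\beta_\lambda\ge 2\alpha_\lambda>0$, and the quantitative deformation lemma combined with a Miranda/degree argument on the square $[1-\varrho,1+\varrho]^2$) is precisely that argument adapted to $\|\cdot\|_\lambda$, with the needed compactness supplied by Lemma~\ref{lemma2}. The only point worth spelling out, as you note, is the continuity of $v\mapsto v^{\pm}$ on $H$ (including the angular term $\del_\theta v^{\pm}=\mathbf 1_{\{\pm v>0\}}\del_\theta v$), which is needed for $\Psi$ to be continuous in the Miranda step.
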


The proof  of Proposition \ref{prop:beta-lambda} follows the same argument as in the proof of Proposition 3.1 in \cite{Bartsch-Weth-Willem}.

\medskip
We also remark that $\beta_\lambda \geq 2\alpha>0$ in view of \eqref{mountain pass value} and the fact that for any $u\in H$,
$$
E_{\lambda}(u)=E_{\lambda}(u^+) + E_{\lambda}(u^-) \quad \hbox{and} \quad 
 E'_{\lambda}(u)u=E'_{\lambda}(u^+)u^+ + E'_{\lambda}(u^-) u^-.
$$

We say that a function $u \in H$ is a \emph{least energy nodal solution} of \eqref{eq:untransformed-problem whole space:q=1} if $u$ is a sign-changing solution of \eqref{eq:untransformed-problem whole space:q=1} such that $E_\lambda(u)=\beta_\lambda$. The following lemma yields the existence of a least energy nodal solution.
 
     \begin{lemma}
     	There exists $u \in \cM_\lambda$ such that $E_\lambda(u)=\beta_\lambda$.
     \end{lemma}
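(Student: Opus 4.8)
The plan is to prove existence of a minimizer of $E_\lambda$ over $\cM_\lambda$ by the direct method, exploiting the compact embedding $H \embed L^p(\R^2)$ from Lemma~\ref{lemma2} together with the Nehari-type structure, following the scheme of Bartsch--Weth--Willem cited after Proposition~\ref{prop:beta-lambda}. First I would take a minimizing sequence $(u_n)_n \subset \cM_\lambda$, so that $E_\lambda(u_n) \to \beta_\lambda$ and $u_n^\pm \in \cN_\lambda$. Since $u_n^\pm$ lies on the Nehari manifold, one has $\|u_n^\pm\|_\lambda^2 = |u_n^\pm|_p^p$, and combined with $E_\lambda(u_n) = (\tfrac12 - \tfrac1p)\|u_n\|_\lambda^2 \to \beta_\lambda$ this yields uniform bounds on $\|u_n\|_\lambda$ and hence on each $\|u_n^\pm\|_\lambda$. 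Passing to a subsequence, $u_n \weak u$ in $H$, and by Lemma~\ref{lemma2} $u_n \to u$ strongly in $L^\rho(\R^2)$ for every $\rho \in (2,\infty)$; in particular $u_n^\pm \to u^\pm$ in $L^p(\R^2)$. A standard lower bound on the Nehari manifold (there exists $\delta>0$ with $|w|_p \ge \delta$ for all $w \in \cN_\lambda$, since $\|w\|_\lambda^2 = |w|_p^p \le C|w|_p^p$ forces $|w|_p^{p-2} \ge 1/C$ by the embedding $\|w\|_\lambda \ge |w|_p \cdot c^{-1}$) then guarantees $|u^\pm|_p \ge \delta > 0$, so $u^+ \not\equiv 0$ and $u^- \not\equiv 0$.

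Next I would show $u \in \cM_\lambda$ and $E_\lambda(u) = \beta_\lambda$. The point is that $u^\pm$ need not a priori lie on $\cN_\lambda$, only satisfy $E_\lambda'(u)u^\pm \le 0$ by weak lower semicontinuity of $\|\cdot\|_\lambda^2$ and strong $L^p$-convergence (namely $\|u^\pm\|_\lambda^2 \le \liminf \|u_n^\pm\|_\lambda^2 = \lim |u_n^\pm|_p^p = |u^\pm|_p^p$). One then uses the elementary fact that for fixed $w^+ , w^- \ne 0$ with $\|w^\pm\|_\lambda^2 \le |w^\pm|_p^p$ there exist unique $s, t \in (0,1]$ with $sw^+ + tw^- \in \cM_\lambda$, and that the map $(s,t) \mapsto E_\lambda(sw^+ + tw^-)$ attains its strict maximum over $[0,\infty)^2$ at this point; plugging $w = u$ gives $\beta_\lambda \le E_\lambda(su^+ + tu^-) \le E_\lambda(u^+) + E_\lambda(u^-) \le \liminf E_\lambda(u_n^+) + \liminf E_\lambda(u_n^-) \le \liminf E_\lambda(u_n) = \beta_\lambda$ (using $E_\lambda(u_n) = E_\lambda(u_n^+) + E_\lambda(u_n^-)$). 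This chain of inequalities forces $s = t = 1$, hence $u \in \cM_\lambda$, and forces equality throughout, hence $E_\lambda(u) = \beta_\lambda$. Finally Proposition~\ref{prop:beta-lambda} identifies such a minimizer as a sign-changing solution, completing the proof.

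The main obstacle is the usual one in this Nehari-manifold argument: controlling the signs upon passing to the weak limit, i.e.\ ensuring the decomposition $u = u^+ + u^-$ of the limit is compatible with the limits of $u_n^+$ and $u_n^-$ (this requires the a.e.\ convergence coming from the compact embedding, so that $u_n^+ \to u^+$ and $u_n^- \to u^-$ separately), and verifying the projection-to-Nehari step when $u^\pm$ only satisfies an inequality. Both are handled by the strong $L^p$ convergence of Lemma~\ref{lemma2} together with the quantitative bound that keeps $u^\pm$ away from zero, exactly as in \cite[Proposition 3.1]{Bartsch-Weth-Willem}; I would reference that argument rather than reproduce every estimate.
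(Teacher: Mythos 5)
Your overall strategy coincides with the paper's: minimizing sequence, coercivity of $E_\lambda$ on $\cM_\lambda$ via $E_\lambda(u_n)=(\tfrac12-\tfrac1p)\|u_n\|_\lambda^2$, the compact embedding of Lemma~\ref{lemma2} to get $u_n^\pm\to u^\pm$ in $L^p$ and $u^\pm\not\equiv 0$, and a projection of the weak limit onto $\cM_\lambda$ to close the argument. However, one step in your final chain is written in the wrong direction and, as stated, fails. You choose $s,t\in(0,1]$ with $su^++tu^-\in\cM_\lambda$ and correctly note that $(s,t)$ is the point where $(s',t')\mapsto E_\lambda(s'u^++t'u^-)$ attains its strict maximum; but then $E_\lambda(su^++tu^-)\ge E_\lambda(u^++u^-)=E_\lambda(u^+)+E_\lambda(u^-)$, with \emph{strict} inequality exactly when $s\ne 1$ or $t\ne 1$ --- i.e.\ precisely in the case your chain is supposed to exclude. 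So the inequality $E_\lambda(su^++tu^-)\le E_\lambda(u^+)+E_\lambda(u^-)$ cannot be used.

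The repair is standard and is what the paper does: compare at the \emph{fixed} scaling $(s,t)$ rather than at $(1,1)$. By weak lower semicontinuity of $\|\cdot\|_\lambda$ and strong $L^p$-convergence of $u_n^\pm$,
$$
\beta_\lambda\le E_\lambda(su^++tu^-)\le\liminf_{n\to\infty}E_\lambda(su_n^++tu_n^-)\le\liminf_{n\to\infty}\bigl(E_\lambda(u_n^+)+E_\lambda(u_n^-)\bigr)=\liminf_{n\to\infty}E_\lambda(u_n)=\beta_\lambda,
$$
where the third inequality uses $E_\lambda(su_n^+)\le E_\lambda(u_n^+)$ and $E_\lambda(tu_n^-)\le E_\lambda(u_n^-)$, valid because $\sigma\mapsto E_\lambda(\sigma u_n^\pm)$ is maximized at $\sigma=1$ for $u_n^\pm\in\cN_{\lambda}$. (Equivalently one can invoke the formula $\sup_{\sigma\ge0}E_\lambda(\sigma w)=(\tfrac12-\tfrac1p)\bigl(\|w\|_\lambda^2/|w|_p^2\bigr)^{p/(p-2)}$ and its weak lower semicontinuity.) Equality throughout then forces $\|u^\pm\|_\lambda^2=|u^\pm|_p^p$, hence $s=t=1$, $u\in\cM_\lambda$ and $E_\lambda(u)=\beta_\lambda$. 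The paper packages the same computation as a contradiction argument starting from the assumption $\|u^+\|_\lambda^2<\liminf_n\|u_n^+\|_\lambda^2$, which produces a strict inequality $\beta_\lambda<\beta_\lambda$. The remaining ingredients of your write-up (boundedness, nontriviality of $u^\pm$, compatibility of the sign decomposition with the limit) are correct and match the paper's proof.
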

     \begin{proof}
     	We proceed similarly as in \cite{Castro-Cossio-Neuberger}. Let $(u_n)_n \subset \cM_\lambda$ be a minimizing sequence. Note that for any $u \in \cM_\lambda$ we have
     	$$
     	E_\lambda(u_n)= \left( \frac{1}{2} - \frac{1}{p} \right) \int_{\R^2} \left( |\nabla u|^2 + \frac{1}{\lambda^2} |\del_\theta u|^2 +  u^2 \right) \, dx, 
     	$$
     	which implies that $E_\lambda$ is coercive on $\cM_\lambda$. This yields that $(u_n)_n$ is bounded and we may therefore pass to a subsequence such that
$$
u_n \weakto u \quad \text{in $H$} .
$$
We then also have $u_n^\pm \weakto u^\pm$ in $H$, and the compact embedding $H \embed L^p$ implies
     	$$
     	\int_{\R^2} |u^\pm|^p \, dx= \lim \limits_{n\to \infty} \int_{\R^2} |u_n^\pm|^p \, dx = C \| u_n^\pm \|_{\lambda}^2 \geq C'>0. 
     	$$
     	Hence $u^\pm \not \equiv 0$.
     	
\medskip
Next, we show that $u_n^\pm \to u^\pm$ in $H$. Arguing by contradiction, assume first that $\|u^+\|_{\lambda}^2 < \liminf \limits_{n \to \infty} \|u_n^+\|_{\lambda}^2$. Then
$$
E_\lambda'(u^+)u^+ = \|u^+\|_{\lambda}^2 -  \|u^+\|_p^p < \liminf \limits_{n\to \infty}\left( \|u_n^+\|_{\lambda}^2 -  \|u_n^+\|_p^p \right) =0 .
$$

Hence the characterization of $\cN_{\lambda}$ yields the existence of $a\in (0,1)$ such that $a u^+ \in \cN_{\lambda}$. A similar argument yields $b u^- \in \cN_{\lambda}$ for some $0<b \leq 1$. Thus, $au^+ +bu^-\in \cM_\lambda$ and we estimate	
$$
\begin{aligned}
\beta_{\lambda} \leq & E_\lambda(a u^+ + b u^-)\\
< &\liminf \limits_{n\to \infty} E_\lambda(a u_n^+ + b u_n^-) = \liminf \limits_{n\to \infty}\left(  E_\lambda(a u_n^+) + E_\lambda(b u_n^-) \right)  
\\
\leq & \liminf \limits_{n\to \infty}(E_\lambda(u_n^+) + E_\lambda(u_n^-))
= \liminf \limits_{n\to \infty}E_\lambda(u_n) \\
=& \beta_{\lambda}, 
\end{aligned}
$$
which is a contradiction. Thus, after passing to a subsequence if necessary and using the uniform convexity of $(H,\|\cdot\|_{\lambda})$, we conclude that $u_n^+ \to u^+$ strongly in $H$. In particular, $u^+ \in \cN_\lambda$. Proceeding similarly, we prove that $u_n^- \to u^-$ strongly in $H$ and that $u^-\in \cN_{\lambda}$ and consequently, $u \in \cM_\lambda$  with $E_\lambda(u)=\beta_\lambda$.
\end{proof}
     
Summarizing the previous results, we have the following.
     \begin{corollary}\label{corollary1}
     	Let $p>2$. For every $\lambda>0$ there exists a least energy nodal solution to \eqref{eq:untransformed-problem whole space:q=1}, i.e. a sign-changing solution $u \in H$ such that $E_\lambda(u)=\beta_\lambda$.
     \end{corollary}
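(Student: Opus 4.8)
The plan is to deduce the corollary directly from the two results just established, since \textbf{Corollary~\ref{corollary1}} is merely a summary statement. By Proposition~\ref{prop:beta-lambda}, the value $\beta_\lambda$ is positive and any minimizer $u \in \cM_\lambda$ of the constrained problem \eqref{nodal minimal energy} is automatically a critical point of $E_\lambda$, hence a weak solution of \eqref{eq:untransformed-problem whole space:q=1}; moreover, by the very definition of $\cM_\lambda$ such a $u$ satisfies $u^+ \not\equiv 0$ and $u^- \not\equiv 0$, so it changes sign. On the other hand, the preceding lemma shows that the infimum defining $\beta_\lambda$ is attained, i.e. there is $u \in \cM_\lambda$ with $E_\lambda(u) = \beta_\lambda$. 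Combining the two, this $u$ is a sign-changing solution of \eqref{eq:untransformed-problem whole space:q=1} with $E_\lambda(u) = \beta_\lambda$, which is exactly the definition of a least energy nodal solution. This already proves the corollary for every $\lambda > 0$.

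So the only real content sits in the two preceding statements, and in particular in the existence lemma, whose argument I would run by the direct method on $\cM_\lambda$: one uses the identity $E_\lambda(u) = \left(\frac{1}{2} - \frac{1}{p}\right)\|u\|_\lambda^2$ valid on the double Nehari set to get coercivity, extracts a weakly convergent minimizing sequence $u_n \weak u$ in $H$ (so $u_n^\pm \weak u^\pm$), and invokes the compact embedding $H \embed L^p(\R^2)$ from Lemma~\ref{lemma2} together with $u_n^\pm \in \cN_\lambda$ to guarantee that $u^\pm$ stays away from $0$. The step I would flag as the crux — and the one that the cited argument of \cite{Bartsch-Weth-Willem, Castro-Cossio-Neuberger} is designed to handle — is ruling out loss of norm under weak convergence: if $\|u^\pm\|_\lambda^2$ were strictly smaller than $\liminf \|u_n^\pm\|_\lambda^2$, one rescales $u^\pm$ back onto $\cN_\lambda$ by factors $a,b \in (0,1]$, forms $a u^+ + b u^- \in \cM_\lambda$, and derives $\beta_\lambda \le E_\lambda(a u^+ + b u^-) < \liminf E_\lambda(u_n) = \beta_\lambda$, a contradiction; uniform convexity of $(H,\|\cdot\|_\lambda)$ then upgrades the weak convergence of $u_n^\pm$ to strong convergence, so $u^\pm \in \cN_\lambda$ and $u \in \cM_\lambda$ with $E_\lambda(u) = \beta_\lambda$.

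Finally, I would note that the only feature genuinely specific to this spiraling setting is that the compactness needed throughout is furnished by the embedding $H \embed L^\rho(\R^2)$, $\rho \in (2,\infty)$, of Lemma~\ref{lemma2}, rather than by radial symmetry or by working on a bounded domain; with that embedding in hand, the standard machinery for least energy nodal solutions applies, and the corollary follows by simply assembling Proposition~\ref{prop:beta-lambda} with the existence lemma.
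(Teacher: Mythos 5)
Your proposal is correct and matches the paper exactly: the corollary is indeed just the combination of Proposition~\ref{prop:beta-lambda} with the preceding existence lemma, and your sketch of that lemma's proof (coercivity on $\cM_\lambda$, the compact embedding $H \embed L^p(\R^2)$, the rescaling contradiction with $a,b\in(0,1]$, and uniform convexity to upgrade to strong convergence) is the same argument the paper gives.
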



\begin{remark} \label{remark on generalization conditions}{\rm 
	We may also consider the more general equation
	\begin{equation} \label{eq:whole space generalized}
	\left \{
	\begin{aligned}
	-\Delta u - \frac{1}{\lambda^2} [x_1 \partial_{x_2} - x_2 \partial_{x_1}]^2 u + u &= f(u) &&\qquad \text{on $\R^2$,}\\
	u(x) & \to 0 &&   \qquad \text{for $|x| \to \infty$,}
	\end{aligned}
	\right.  
	\end{equation}
	where $f: \R \to \R$ is a continuous function. 
	In order to extend our results, consider the following conditions: 
	\begin{align*}
	(A_1) &  \ \text{There exists $C>0$ such that $|f(t)| \le C (|t| + |t|^p)$ for $t \in \R$} \\
	(A_2) & \ \text{$t \mapsto \frac{f(t)}{t}$ is strictly increasing on $\R \setminus \{0\}$ and $\lim_{t \to 0}\frac{f(t)}{t}\le 0, \ \lim_{t \to \pm \infty }\frac{f(t)}{t} = \infty.
		$} 
	\end{align*}
Under these assumptions, it can be shown that the results of this section, concerned with problem \eqref{eq:untransformed-problem whole space:q=1}, continue to hold true for \eqref{eq:whole space generalized}.}
\end{remark}

\section{Existence and symmetry of odd solutions}
\label{sec:exist-symm-odd}

This section is devoted to the study of solutions of the problem (\ref{eq:untransformed-problem}), which correspond, by odd reflection, to solutions of (\ref{eq:untransformed-problem whole space}) with hyperplane antisymmetry. In particular, we shall prove Parts (i) and (ii) of Theorem~\ref{exist-mountain-pass}. 

\medskip
Consider the space $H^+$ defined in (\ref{eq:def-H-plus}). For fixed $\lambda>0$ and $q \in \{0,1\}$, we endow $H^+$ with the $\lambda$-dependent scalar product 
	    $$
	    \langle u,v \rangle_{\lambda,q} \mapsto \int_{\R^2_+} \Bigl(\nabla u \cdot \nabla v + \frac{1}{\lambda^2}
	    (\del_\theta u)(\del_\theta v)+ q\, u v  \Bigr) dx,
	    $$
            and we let $\|\cdot\|_{\lambda,q}$ denote the corresponding norm.
Observe that any $u \in H^+$ can be extended to an element of $H$ either trivially or by odd reflection. Therefore, Lemma \ref{lemma1} and \ref{lemma2} immediately yield the following. 
\begin{corollary} \label{H+ properties}
	\begin{itemize}
		\item[(i)]
		Any $u \in H^+$ satisfies
     	\begin{equation} \label{angular - L2 estimate}
    	|u|_{2}^2 \le \int_{\R^2_+}|\del_\theta u|^2  dx .
	    \end{equation}
            In particular, the norms $\|\cdot\|_{\lambda,0}$ and $\|\cdot\|_{\lambda,1}$ are equivalent on $H^+$, and $H^+$ is a Hilbert space with either of these norms. Moreover, we have a continuous embedding $H^+ \hookrightarrow H^1(\R^2_+)$.
	    \item[(ii)]
The space $H^+$ is compactly embedded in $L^\rho(\R^2_+)$ for $\rho>2$.
	\end{itemize}
\end{corollary}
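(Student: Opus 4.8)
The plan is to view $H^+$ as a subspace of the ambient space $H$ via extension by zero and then read off all the assertions from Lemmas~\ref{lemma1} and~\ref{lemma2}. Given $u \in H^+ \subset H^1_0(\R^2_+)$, let $\tilde u \in H^1(\R^2)$ be its trivial extension. Since $\nabla \tilde u$ is the trivial extension of $\nabla u$, the angular derivative $\del_\theta \tilde u = x_1\del_{x_2}\tilde u - x_2\del_{x_1}\tilde u$ is the trivial extension of $\del_\theta u$, so $|\del_\theta\tilde u|_2^2 = \int_{\R^2_+}|\del_\theta u|^2\,dx < \infty$ and hence $\tilde u \in H$. Thus $u\mapsto\tilde u$ is a linear isometry of $(H^+,\|\cdot\|_{\lambda,1})$ onto a subspace of $(H,\|\cdot\|_\lambda)$; this subspace is closed, because a $\|\cdot\|_\lambda$-limit of trivial extensions is again an $H^1(\R^2)$-function vanishing a.e.\ on $\{x_1\le 0\}$, hence the trivial extension of its restriction to $\R^2_+$, which lies in $H^+$. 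Together with the completeness of $H$ this yields that $H^+$ is a Hilbert space under $\|\cdot\|_{\lambda,1}$.

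For the inequality in part~(i), I would observe that the open half-space $\R^2_+$ is precisely the cone $C_{\pi/2}$ of Lemma~\ref{lemma1}, with the angle measured from the positive $x_1$-axis. Since $\tilde u \equiv 0$ on $\R^2 \setminus C_{\pi/2}$, applying Lemma~\ref{lemma1}(ii) with $\theta_0 = \pi/2$ gives $|\tilde u|_2 \le \frac{2(\pi/2)}{\pi}\,|\del_\theta\tilde u|_2 = |\del_\theta\tilde u|_2$, i.e.\ $|u|_2^2 \le \int_{\R^2_+}|\del_\theta u|^2\,dx$, exactly as claimed. (Alternatively, one may extend $u$ by odd reflection across $\{x_1=0\}$, note that the resulting function has identically vanishing radial average, and invoke Lemma~\ref{lemma1}(i).) From this, the equivalence of $\|\cdot\|_{\lambda,0}$ and $\|\cdot\|_{\lambda,1}$ on $H^+$ is immediate: trivially $\|u\|_{\lambda,0}\le\|u\|_{\lambda,1}$, while $|u|_2^2 \le \int_{\R^2_+}|\del_\theta u|^2\,dx \le \lambda^2\|u\|_{\lambda,0}^2$ gives $\|u\|_{\lambda,1}^2 \le (1+\lambda^2)\|u\|_{\lambda,0}^2$, so $H^+$ is a Hilbert space under either norm. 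The continuous embedding $H^+ \hookrightarrow H^1(\R^2_+)$ then follows from Lemma~\ref{lemma2} via $\|u\|_{H^1(\R^2_+)} = \|\tilde u\|_{H^1(\R^2)} \le \|\tilde u\|_\lambda = \|u\|_{\lambda,1}$, together with the norm equivalence.

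For part~(ii), let $(u_n)_n$ be a bounded sequence in $(H^+,\|\cdot\|_{\lambda,q})$. By the isometry and the norm equivalence, $(\tilde u_n)_n$ is bounded in $(H,\|\cdot\|_\lambda)$, so the compact embedding $H \embed L^\rho(\R^2)$ of Lemma~\ref{lemma2} (valid for $\rho\in(2,\infty)$) provides a subsequence converging in $L^\rho(\R^2)$; restricting to $\R^2_+$ shows the corresponding subsequence of $(u_n)_n$ converges in $L^\rho(\R^2_+)$. Hence $H^+ \embed L^\rho(\R^2_+)$ is compact for every $\rho\in(2,\infty)$.

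I do not expect a genuine obstacle here: once $H^+$ is identified with a closed subspace of $H$, the corollary is essentially bookkeeping. The only two points deserving an explicit line are checking that the trivial extension of an $H^+$-function really lies in $H$ (automatic from $u\in H^1_0(\R^2_+)$), and matching $\R^2_+$ with the cone $C_{\pi/2}$ so that the Wirtinger constant $2\theta_0/\pi$ in Lemma~\ref{lemma1}(ii) specialises to exactly $1$, which is what gives the sharp form of the inequality in~(i).
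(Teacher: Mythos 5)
Your proof is correct and follows exactly the route the paper takes (the paper simply states that any $u\in H^+$ extends to an element of $H$ trivially or by odd reflection, so that Lemmas~\ref{lemma1} and~\ref{lemma2} immediately yield the corollary). You have merely filled in the bookkeeping — identifying $\R^2_+$ with the cone $C_{\pi/2}$ so that the Wirtinger constant equals $1$, and deducing norm equivalence, the embedding into $H^1(\R^2_+)$, and compactness by restriction — all of which is accurate.
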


\begin{remark}{\rm 
  (i) Similar statements are also true, when the underlying space is the cone $C_{\theta_0}$ described in Lemma \ref{lemma1}.
  \medskip
  
  (ii) As in Lemma~\ref{noncompactness-L-2}, we see that the embedding $H^+ \hookrightarrow L^2(\R^2_+)$ is not compact.}
\end{remark}

First, we establish the symmetry of postive weak solutions of \eqref{eq:untransformed-problem} as a consequence of the following.

\begin{theorem} \label{General half space symmetry}
	Let $\lambda>0$, and let $f \in C^1([0,\infty))$ satisfy 
\begin{equation}\label{eq:growth-der-f}
f'(t) \le C \Bigl(t^{\sigma_1}+t^{\sigma_2}\Bigr) \quad \hbox{for} \quad t \ge 0
\end{equation}
with constants $\sigma_1,\sigma_2>0$. Moreover, let $u \in H^+ \cap L^\infty(\R^2)$ be a positive weak solution of the problem 
		\begin{equation} \label{eq:General half space equation-symmetry}
		\left \{	\begin{aligned}
		-\Delta u - \frac{1}{\lambda^2} \del_\theta^2 u &=f(u) &&\qquad \text{on $\R^2_+$,}\\
		u &=0 && \qquad \text{on $\partial \R^2_+$.}
		\end{aligned}
		\right.  
		\end{equation}
Then $u$ is symmetric with respect to the $x_1$-axis and decreasing with respect to the angle $|\theta|$ from the $x_1$-axis.
\end{theorem}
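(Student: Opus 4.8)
The natural approach is the method of moving planes (or rather moving half-planes) adapted to the angular variable $\theta$. The equation \eqref{eq:General half space equation-symmetry} written in polar coordinates $(r,\theta)$ with $x \in \R^2_+$ corresponding to $\theta \in (-\pi/2,\pi/2)$ reads
$$
-u_{rr} - \frac{u_r}{r} - \Bigl(\frac{1}{\lambda^2} + \frac{1}{r^2}\Bigr) u_{\theta\theta} = f(u),
$$
with the boundary condition $u = 0$ at $\theta = \pm \pi/2$. This is a semilinear elliptic equation which, crucially, is invariant under the reflection $\theta \mapsto -\theta$ (the coefficients depend only on $r$, not on $\theta$), and the domain $\{(r,\theta): r>0,\ |\theta|<\pi/2\}$ is symmetric under this reflection. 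The plan is to run the moving plane method in the $\theta$-direction: for $\mu \in (0,\pi/2)$ set $\Sigma_\mu := \{(r,\theta): r>0,\ \mu < \theta < \pi/2\}$, let $\theta^\mu := 2\mu - \theta$ be the reflected angle, and define $u_\mu(r,\theta) := u(r,\theta^\mu)$ and $w_\mu := u_\mu - u$ on $\Sigma_\mu$. First I would verify that $w_\mu$ satisfies a linear equation $L w_\mu + c_\mu(r,\theta) w_\mu = 0$ on $\Sigma_\mu$, where $L$ is the elliptic operator above and $c_\mu = -\frac{f(u_\mu)-f(u)}{u_\mu - u}$ is bounded (using $u \in L^\infty$, $f \in C^1$, and the growth bound \eqref{eq:growth-der-f}); on the portion of $\partial\Sigma_\mu$ where $\theta = \pi/2$ we have $u = 0$ and $u_\mu \ge 0$, so $w_\mu \ge 0$ there, while on $\{\theta = \mu\}$ we have $w_\mu = 0$.

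The core of the argument is to show $w_\mu \ge 0$ on $\Sigma_\mu$ for every $\mu \in (0,\pi/2)$; letting $\mu \to 0^+$ then gives $u(r,-\theta) \ge u(r,\theta)$ for $\theta > 0$, and applying the same argument from the reflection point $\theta = 0$ on the other side (or exploiting $\theta \mapsto -\theta$ symmetry of the setup) yields the reverse inequality, hence symmetry about the $x_1$-axis ($\theta=0$); monotonicity in $|\theta|$ then follows from $w_\mu \ge 0$ for all $\mu$. To prove $w_\mu \ge 0$ I would use a maximum principle on the (possibly unbounded in $r$) domain $\Sigma_\mu$. Since $u \in H^+$ decays in an $L^2$-averaged sense as $|x|\to\infty$ (and in fact $u \in L^\infty$ with the Poincaré-type control of Lemma~\ref{lemma1}), one can either invoke a maximum principle for domains that are "narrow" in the $\theta$-direction when $\mu$ is close to $\pi/2$ — giving a starting point — and then use a continuity/connectedness argument in $\mu$, together with the strong maximum principle and Hopf's lemma, to push $\mu$ down to $0$. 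The near-$\pi/2$ smallness of the angular width $\pi/2 - \mu$ makes the first eigenvalue of $-\partial_{\theta\theta}$ on that interval large, which dominates the bounded zeroth-order term $c_\mu$ and forces $w_\mu \ge 0$; this is exactly where the angular Poincaré/Wirtinger inequality philosophy from Lemma~\ref{lemma1} enters.

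The main obstacle I anticipate is the unboundedness of $\Sigma_\mu$ in the radial direction $r$: the standard moving plane maximum principle requires either a bounded domain or decay of the solution at infinity, and here $u$ need not decay pointwise as $r \to \infty$ — only in the averaged/integral sense built into $H$. To handle this I would work with $w_\mu^- := \min(w_\mu,0)$ as a test function, multiply the equation by $w_\mu^-$ (with the weight $r\,dr\,d\theta$), integrate over $\Sigma_\mu$, and combine the resulting angular Poincaré inequality on the interval $(\mu,\pi/2)$ — which for $\mu$ near $\pi/2$ beats the $L^\infty$-bound on $c_\mu$ — to conclude $\|w_\mu^-\|_{L^2} = 0$. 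One must be careful that $w_\mu \in H$-type spaces so that the integrations by parts and the boundary terms at $\theta = \pi/2$ (where $w_\mu \ge 0$, so the boundary term has a favorable sign) and as $r \to \infty$ (where the $H$-membership kills the boundary contribution) are all justified; this is where the functional-analytic setup of Section~\ref{sec:funct-analyt-fram}, in particular the density Lemma~\ref{space-test-functions-dense} and the regularity Lemma~\ref{L-infty regularity}, is used. Once the variational maximum principle is in place for $\mu$ near $\pi/2$, the sliding argument down to $\mu = 0$ is routine: define $\mu_0 := \inf\{\mu : w_{\mu'} \ge 0 \text{ on } \Sigma_{\mu'} \text{ for all } \mu' \ge \mu\}$, and rule out $\mu_0 > 0$ by the strong maximum principle (giving $w_{\mu_0} > 0$ in the interior) plus Hopf's lemma and a compactness argument on an exhausting sequence of bounded subdomains, contradicting minimality of $\mu_0$.
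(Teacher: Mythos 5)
Your overall strategy coincides with the paper's: rotating planes in the angular variable, the linearized equation for $w_\mu=u_\mu-u$ with coefficient $c_\mu=\int_0^1 f'((1-t)u+tu_\mu)\,dt$ bounded via $|u|_\infty$ and \eqref{eq:growth-der-f}, a starting step on thin angular sectors obtained by testing against $w_\mu^-$ and invoking the angular Wirtinger/Poincar\'e inequality of Lemma~\ref{lemma1}(ii), and then a continuation in the angle down to the symmetric position. Up to and including the starting step your argument is the paper's argument.

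The gap is in the continuation step, which you dismiss as ``routine'' via the strong maximum principle, Hopf's lemma and a compactness argument on an exhausting sequence of bounded subdomains. In this problem $\Sigma_\mu$ is unbounded in $r$ and $u$ has no pointwise decay as $r\to\infty$ (only $H^+$-membership), so the classical decreasing-$\mu_0$ argument does not close: for $\mu$ slightly below $\mu_0$ the set $\{w_\mu<0\}$ could live arbitrarily far out in $r$, and Hopf's lemma together with interior positivity of $w_{\mu_0}$ on bounded pieces gives no control there. The paper closes exactly this hole by staying variational in the continuation step as well: since $u\in H^+\hookrightarrow L^{\tau}(\R^2_+)$ for every $\tau>2$, one picks $\rho>2$ with $\tau_i=\sigma_i\rho/(\rho-2)>2$ and a compact set $D\subset\Sigma_{\mu_0}$ with $\|u\|_{L^{\tau_1}(\Sigma_{\mu_0}\setminus D)}^{\sigma_1}+\|u\|_{L^{\tau_2}(\Sigma_{\mu_0}\setminus D)}^{\sigma_2}$ small; on $D$ one has $w_\mu>0$ for $\mu$ near $\mu_0$ by the strong maximum principle and continuity, so $w_\mu^-$ is supported in the tail region, and H\"older's inequality plus the Sobolev embedding of Lemma~\ref{lemma2} yield $|w_\mu^-|_\rho^2\le\frac12|w_\mu^-|_\rho^2$, hence $w_\mu^-\equiv0$. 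This openness-of-the-good-set argument replaces Hopf's lemma entirely, and without something of this kind (or an a priori pointwise decay of $u$ at infinity, which is not available here) your sliding step does not go through.
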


\begin{remark}
  \label{remark-after-symmetry}{\rm
Theorem~\ref{General half space symmetry} in particular applies in the case where the nonlinearity $f$ is given by $f(t)=-q t + |t|^{p-2}t$ for some $p \in (2,\infty)$, $q \in \{0,1\}$. In this case, Lemma~\ref{L-infty regularity} and Remark~\ref{Appendix-remark} below imply that every weak solution $u \in H^+$ of (\ref{eq:growth-der-f}) is bounded. Hence we deduce the statement of Theorem~\ref{exist-mountain-pass}(ii). }
\end{remark}

\begin{proof}[Proof of Theorem~\ref{General half space symmetry}]
	For simplicity, we assume $\lambda =1$. We shall argue by the method of rotating planes. 
	For $\theta \in [-\frac{\pi}{2},0) \cup (0,\frac{\pi}{2}]$, set $e_\theta := (\cos \theta, \sin \theta )$,
	$$
	 T_\theta := \{x \in \R^2: x \cdot e_\theta =0\} \quad \hbox{and}\quad \Sigma_\theta := \{ x \in \R_+^2: x \cdot e_\theta <0 \} .
$$
	
Given a positive solution $u\in H^+\cap L^{\infty}(\R^2_+)$ of \eqref{eq:General half space equation-symmetry}, consider the functions $u_\theta, w_\theta:\Sigma_\theta  \to \R$ defined by
$$
u_\theta(x) = u(x-2 (x \cdot e_\theta)e_\theta) \quad \hbox{and} \quad w_\theta := u_\theta - u
$$
and extend them trivially outside $\Sigma_{\theta}$.	
	
\medskip
A direct calculation shows  that $w_\theta$ satisfies 
\begin{equation}
  \label{eq:w-theta-sat}
	\begin{aligned}
	-\Delta w_\theta - \del_\theta^2 w_\theta & = c_\theta(x) w_\theta \quad && \text{in $\Sigma_\theta$} \\
	w_\theta & =  0 \quad \quad  && \text{on $T_\theta$} \\
	w_\theta &>0 \quad \quad  && \text{on $\del \Sigma_\theta \setminus T_\theta$,} 
	\end{aligned}
\end{equation}
	where 
	$$
	c_\theta(x)=\int_0^1 f'\bigl((1-t)u(x)+t u_\theta(x)\bigr)\,dt. 
	$$
		
Consider the set
$$
\Theta^+ := \left\{ \theta \in \left(0,\frac{\pi}{2} \right): w_{\theta} \geq 0 \text{ in $\Sigma_{\theta}$} \right\}
$$
which is clearly a 
closed set in $(0,\frac{\pi}{2})$.

\medskip
We claim that $\Theta^+$ is non-empty. To prove this claim, we proceed as follows. Observe first that $w_\theta^- := \min \{w_\theta,0\} \in H^+$. Moreover, using \eqref{eq:growth-der-f},  we have that for $x \in \Sigma_\theta$ with $w_\theta^-(x)<0$,
\begin{equation}\label{eq:estimate-ctheta}
\begin{aligned}
c_\theta(x) \le & C \int_0^1 \Bigl[\bigl((1-t)u(x)+t u_\theta(x)\bigr)^{\sigma_1}+
\bigl((1-t)u(x)+t u_\theta(x)\bigr)^{\sigma_2} \Bigr]dt \\
\le & C \Bigl[u^{\sigma_1}(x)+u^{\sigma_2}(x)\Bigr].
\end{aligned}
\end{equation}

Also, the boundary conditions imply $w_\theta^- \equiv 0$ on $\del \Sigma_\theta$, and testing the equation \eqref{eq:w-theta-sat} against $w_\theta^-$ yields
\begin{equation}
  \label{eq:testing-ineq}
  \begin{aligned}
	|\nabla w_\theta^-|_2^2 + |\del_\theta w_\theta^-|_2^2 = & \int_{\R^2} c_\theta(x) (w_\theta^-)^2 \, dx\\
	 \leq & C \int_{\R^2}\Bigl[u^{\sigma_1}+u^{\sigma_2}\Bigr]
(w_\theta^-)^2\,dx\\ 
\le & C_0 |w_\theta^-|_2^2
\end{aligned}
\end{equation}
with $C_0 = C\bigl(|u|_\infty^{\sigma_1} +|u|_\infty^{\sigma_2}\bigr)$. Therefore, by Lemma~\ref{lemma1}(ii), 
$$
\frac{\pi}{2\theta}|w_\theta^-|_2 \le |\del_\theta w_\theta^-|_2 \le \sqrt{C_0}|w_\theta^-|_2.  
$$
Consequently, $w_\theta^- \equiv 0$ provided that $0 < |\theta| < \frac{\sqrt{C_0} \pi}{2}$ and this proves the claim. 

\medskip
Next, we claim that $\Theta^+$ is also open in $(0,\frac{\pi}{2})$. To see this, let $\theta_0 \in \Theta^+$. Since $w_{\theta_0} \not \equiv 0$ by (\ref{eq:w-theta-sat}), the strong maximum principle implies that $w_{\theta_0}>0$ in $\Sigma_{\theta_0}$. 

\medskip
Fix $\rho>2$ such that $\tau_i:= \frac{\sigma_i \rho}{\rho-2}>2$ for $i=1,2$. By Lemma~\ref{lemma2}, there exists $\kappa_\rho>0$ such that 
$$
|w|_\rho^2 \le \kappa_\rho \Bigl(|\nabla w|_2^2 + |\del_\theta w|_2^2\Bigr) \qquad \text{for all $w \in H^+$.}
$$
Moreover, we may choose a compact set $D \subset \Sigma_{\theta_0}$ such that
	$$
	\|u\|_{L^{\tau_1}(\Sigma_{\theta_0} \setminus D)}^{\sigma_1}+\|u\|_{L^{\tau_2}(\Sigma_{\theta_0} \setminus D)}^{\sigma_2}< \frac{1}{2\kappa_\rho C},
	$$
where $C>0$ is the constant in \eqref{eq:estimate-ctheta}.

\medskip	
On the other hand, by continuity of the family $w_{\theta}$ w.r.t. $\theta$ there exists a neighborhood $N \subset (0,\frac{\pi}{2})$ of $\theta_0$ with the property that for all $\theta \in N$,
$$
w_\theta>0\quad \text{in $D$}\quad \text{and}\quad 
\|u\|_{L^{\tau_1}(\Sigma_{\theta} \setminus D)}^{\sigma_1}+\|u\|_{L^{\tau_2}(\Sigma_{\theta} \setminus D)}^{\sigma_2} < \frac{1}{2\kappa_\rho C}.
$$
From (\ref{eq:testing-ineq}) and H\"older's inequality, it  follows that  
\begin{align*}
	|w_\theta^-|_{\rho}^2 &\leq \kappa_\rho \Bigl(|\nabla w_\theta^-|_2^2 + |\del_\theta w_\theta^-|_2^2\Bigr)\le \kappa_\rho C \int_{\R^2}\Bigl[u^{\sigma_1}+u^{\sigma_2}\Bigr](w_\theta^-)^2\,dx\\
&\le \kappa_\rho C \Bigl(\|u\|_{L^{\tau_1}(\Sigma_{\theta_0} \setminus D)}^{\sigma_1}+\|u\|_{L^{\tau_2}(\Sigma_{\theta_0} \setminus D)}^{\sigma_2}\Bigr)|w_\theta^-|_{\rho}^2 \le \frac{1}{2}|w_\theta^-|_{\rho}^2
\end{align*}
for any $\theta \in N$.

\medskip
Consequently, $w_\theta^- \equiv 0$ for $\theta \in N$ and this proves the claim. 

\medskip
Since $\Theta^+$ is an open, closed and nonempty subset of $\left(0,\frac{\pi}{2}\right)$, we conclude that $\Theta^+ = \left(0,\frac{\pi}{2}\right)$. In the same manner, we see that  
$$
	\Theta^- := \left\{ \theta \in \left(-\frac{\pi}{2},0 \right): w_{\theta} \geq 0 \text{ in $\Sigma_{\theta}$} \right\} = (-\frac{\pi}{2},0)
$$ 
Consequently $u$ is decreasing with respect to the angle $|\theta|$ from the $x_1$-axis.

Finally, a continuity argument also shows that $w_\theta \ge 0$ in $\Sigma_\theta$ for $\theta \in \{\pm \frac{\pi}{2}\}$, which, in particular, forces the symmetry of $u$ with respect to reflection at the $x_1$-axis.
\end{proof}

Next, let $f: \R \to \R$ be a continuous function satisfying $(A_1)$ and $(A_2)$ as in Remark \ref{remark on generalization conditions} and set  $F(t)=\int_0^t f(s) \, ds$.
We consider the energy functional 
$$
E_{\lambda}^+: H^+ \to \R, \quad
E_{\lambda}^+(u) := \frac{1}{2} \|u\|_{\lambda,0}^2- \int_{\R^2_+} F(u) \,dx 
$$

Again, standard arguments in the calculus of variations show that $E_{\lambda}^+$ is of class $C^1$, and critical points of $E_{\lambda}^+$ are solutions of
the associated Euler-Lagrange equation
		\begin{equation} \label{eq:General half space equation}
		\left \{	\begin{aligned}
		-\Delta u - \frac{1}{\lambda^2} \del_\theta^2 u &=f(u) &&\qquad \text{on $\R^2_+$,}\\
		u &=0 && \qquad \text{on $\partial \R^2_+$.}
		\end{aligned}
		\right.  
		\end{equation}
As in Section \ref{sec:funct-analyt-fram} we consider the associated Nehari manifold
$$
\cN_{\lambda}^+ := \left\{ u \in H^+ \setminus \{0\}: [E_{\lambda}^+]'(u)u=0 \right\} 
$$
and set 
\begin{equation}
  \label{eq:inf-sup-mountain-pass}
c_{\lambda}:=\inf_{u \in \cN^+} E_{\lambda}^+(u) .
\end{equation}
This is the ground state energy in the sense that $E_{\lambda}^+(u) \ge c_{\lambda}$ for every nontrivial solution of (\ref{eq:General half space equation}). 
\begin{theorem}
\label{exist-mountain-pass-general}  
Let $p>2$, $\lambda>0$, and assume that $f: \R \to \R$ is a continuous function satisfying the assumptions $(A_1)$ and $(A_2)$ listed in Remark \ref{remark on generalization conditions}.
Then 
\begin{equation}
  \label{eq:c-lambda-q-characterization}
c_{\lambda} = \inf_{u \in H^+ \setminus \{0\}} \sup_{t \ge 0} E_{\lambda}^+(t u).
\end{equation}
Moreover, problem \eqref{eq:General half space equation} admits a ground state solution, i.e., a solution $v \in H^+ \setminus \{0\}$ such that $E_{\lambda}^+(v)=c_{\lambda}$.
\end{theorem}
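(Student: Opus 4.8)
The plan is to run the standard Nehari manifold / mountain pass machinery, exploiting the compact embedding $H^+ \hookrightarrow L^\rho(\R^2_+)$ for $\rho \in (2,\infty)$ from Corollary~\ref{H+ properties}(ii), which is what rescues compactness despite the lack of compactness in $L^2$. First I would verify, using $(A_1)$, $(A_2)$, that for every fixed $u \in H^+ \setminus \{0\}$ the fibering map $t \mapsto E_\lambda^+(tu)$ on $[0,\infty)$ starts at $0$, is initially increasing (since $\lim_{t\to 0} f(t)/t \le 0$ makes the quadratic term dominate near $0$), and tends to $-\infty$ as $t \to \infty$ (since $\lim_{t\to\pm\infty} f(t)/t = \infty$ forces the potential term $\int F(tu)$ to grow superquadratically). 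The strict monotonicity of $t \mapsto f(t)/t$ then gives that this fibering map has a \emph{unique} critical point $t(u)>0$, which is its global maximum, and that $t(u)u$ is the unique point of $\cN_\lambda^+$ on the ray $\R_+ u$. This yields the identity \eqref{eq:c-lambda-q-characterization} directly: $\sup_{t\ge 0} E_\lambda^+(tu) = E_\lambda^+(t(u)u)$, and taking the infimum over rays matches $c_\lambda$.

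Next I would establish that $c_\lambda > 0$ and that minimizing sequences stay in a good region. On $\cN_\lambda^+$ one has $\|u\|_{\lambda,0}^2 = \int_{\R^2_+} f(u)u\,dx$, and combining this with $(A_1)$ and the Sobolev embedding $H^+ \hookrightarrow L^p(\R^2_+)$ gives a uniform lower bound $\|u\|_{\lambda,0} \ge \delta > 0$ on $\cN_\lambda^+$; since $(A_2)$ implies $F(t) \le \tfrac12 f(t)t$, one gets $E_\lambda^+(u) \ge \bigl(\tfrac12 - o(1)\bigr)\|u\|_{\lambda,0}^2$-type control, hence $c_\lambda > 0$. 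For a minimizing sequence $(u_n) \subset \cN_\lambda^+$ for $c_\lambda$, coercivity of $E_\lambda^+$ on $\cN_\lambda^+$ (again from $F(t) \le \tfrac12 f(t)t$, or more precisely from the superquadraticity packaged in $(A_2)$) gives boundedness in $H^+$; passing to a subsequence, $u_n \weakto u$ in $H^+$ and, by Corollary~\ref{H+ properties}(ii), $u_n \to u$ strongly in $L^\rho(\R^2_+)$ for every $\rho \in (2,\infty)$. Using $(A_1)$ to control $f(u_n)u_n$ and $F(u_n)$ by such $L^\rho$ norms, one shows $u \ne 0$ (the $L^p$ masses do not vanish because of the lower bound $\delta$), then $u \in \cN_\lambda^+$ after possibly rescaling by $t(u) \le 1$ (weak lower semicontinuity of $\|\cdot\|_{\lambda,0}$ forces $t(u)\le 1$), and finally
$$
c_\lambda \le E_\lambda^+(t(u)u) \le \liminf_{n\to\infty} E_\lambda^+(t(u)u_n) \le \liminf_{n\to\infty} E_\lambda^+(u_n) = c_\lambda,
$$
so $t(u) = 1$, $u \in \cN_\lambda^+$, and $E_\lambda^+(u) = c_\lambda$. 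Strong convergence $u_n \to u$ then follows from the equality case, and the uniform convexity of $(H^+,\|\cdot\|_{\lambda,0})$.

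It remains to show the minimizer $u$ is a genuine critical point of $E_\lambda^+$ (not merely a constrained critical point). This is the standard argument that $\cN_\lambda^+$ is a natural constraint: the functional $G(v) := [E_\lambda^+]'(v)v$ satisfies $G'(u)u \ne 0$ at $u \in \cN_\lambda^+$ (this is exactly where the strict monotonicity in $(A_2)$ is used — it guarantees $\frac{d}{dt}\big|_{t=1}[E_\lambda^+]'(tu)(tu) < 0$), so a Lagrange multiplier computation forces the multiplier to vanish and hence $[E_\lambda^+]'(u) = 0$. I would cite \cite{Szulkin-Weth} for this standard fact rather than reprove it. The main obstacle, and the only place where the geometry of the problem (rather than abstract Nehari theory) enters, is the compactness step: one must be sure that the compact embedding into $L^\rho(\R^2_+)$ for $\rho > 2$ genuinely suffices to pass to the limit in all the nonlinear terms $\int f(u_n)u_n$, $\int F(u_n)$ — here the growth bound $(A_1)$ with exponents between $2$ and $\infty$ (strictly!) is essential, and the borderline $L^2$-term is harmless because it is part of the weakly lower semicontinuous quadratic form. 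Everything else is routine once Corollary~\ref{H+ properties} is in hand.
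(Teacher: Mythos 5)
Your proposal is correct and follows essentially the same route as the paper, which simply invokes the Nehari manifold scheme of \cite[Theorem 20]{Szulkin-Weth} after verifying that $(A_1)$--$(A_2)$ give $F(t)/t^2\to\infty$ and noting that the compact embedding $H^+\hookrightarrow L^\rho(\R^2_+)$, $\rho\in(2,\infty)$, replaces the concentration-compactness arguments of the periodic setting there. The only step worth spelling out a bit more carefully is the passage to the limit in $\int f(u_n)u_n$ and $\int F(u_n)$: since $(A_2)$ only gives $\ell:=\lim_{t\to 0}f(t)/t\le 0$ rather than $=0$, one should first absorb the linear part $\ell t$ into the (weakly lower semicontinuous) quadratic form, using $|u|_2\le \|u\|_{\lambda,0}$ from Corollary~\ref{H+ properties}, and then handle the remainder, which is $o(t)$ at $0$ and subcritical at infinity, by the usual $\varepsilon$-splitting combined with the compact embedding.
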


\begin{proof}
The proof essentially follows the lines of the proof of \cite[Theorem 20]{Szulkin-Weth}, see also \cite[Section 4]{Liu-Wang}. We note here that $(A_1)$ and $(A_2)$ ensure that the assumptions in \cite[Theorem 20]{Szulkin-Weth} are satisfied. Indeed, $(A_2)$ implies that for any $R>0$ there exists $t_R>0$ such that $f(t) \geq Rt$ for $t \geq t_R$. Thus
$$
F(t)=\int_0^t f(s) \, ds \geq \int_{t_R}^{t} Rs \, ds = \frac{R}{2} (t^2 - t_R^2) 
$$
for $t \geq t_R$. It follows that
$$
\lim_{t \to \infty} \frac{F(t)}{t^2} = \infty ,
$$
i.e. assumption (iv) in \cite[Theorem 20]{Szulkin-Weth} is satisfied. Consequently, the proof given there can be carried through similarly, with some simplifications because the compact embedding $H^+ \hookrightarrow L^p(\R^2_+)$ replaces arguments based on compactness modulo translations in the periodic setting of \cite[Theorem 20]{Szulkin-Weth}.
\end{proof}

\begin{remark}{\rm 
    (i) The statement of Theorem~\ref{exist-mountain-pass}(i) is a special case of Theorem~\ref{exist-mountain-pass-general}, since the nonlinearity $t \mapsto f(t)=-q t + |t|^{p-2}t$ satisfies conditions $(A_1)$ and $(A_2)$ if $q \in \{0,1\}$ and $p \in (2,\infty)$.

    (ii) Under the assumptions of Theorem~\ref{exist-mountain-pass-general}, it can be shown that ground state solutions cannot change sign, see \cite[Remark 17]{Szulkin-Weth}.}
\end{remark}

\section{Asymptotics of least energy odd solutions}
\label{sec:asympt-lambda-to}
  \label{sec:asympt-via-energy-lemma}

In this section we fix $p \in (2,\infty)$, $q=1$, and we study the asymptotics of least energy solutions to (\ref{eq:untransformed-problem}) in the case $q=1$ as $\lambda \to \infty$ and as $\lambda \to 0$. In particular, we shall complete the proofs of Theorem~\ref{exist-mountain-pass}(iii) and of Theorem~\ref{rescaled convergence}. We will use the notation introduced in the previous section in the special case of the nonlinearity $t \mapsto f(t)=-t + |t|^{p-2}t$ which satisfies conditions $(A_1)$ and $(A_2)$. By the definition of the mountain pass value in (\ref{eq:inf-sup-mountain-pass}) and the fact that $E_{\lambda_1}^+ \ge E_{\lambda_2}^+$ for $0 <\lambda_1< \lambda_2 < \infty$, we infer that the function 
$$
(0,\infty) \to (0,\infty), \qquad \lambda \mapsto c_{\lambda}
$$
is decreasing, and therefore the limits
\begin{equation}
  \label{eq:def-c-limits}
c_0 := \lim_{\lambda \to 0} c_{\lambda} \qquad \text{and} \qquad c_\infty := \lim_{\lambda \to \infty} c_{\lambda}
\end{equation}
exist in $[0,\infty]$. Next we note that 
\begin{equation}
  \label{eq:ray-characterization}
\sup_{t \ge 0}E_{\lambda}^+ (tv) = E_{\lambda}^+(t_v^\lambda v)= \Bigl(\frac{1}{2}-\frac{1}{p}\Bigr)\frac{
\|v\|_{\lambda,1}^{\frac{2p}{p-2}}}{|v|_{p}^{\frac{2p}{p-2}}} \quad \text{for every $v \in H^+ \setminus \{0\}$}
\end{equation}
with 
$$
t_v^\lambda = \left(\frac{\|v\|_{\lambda,1}^2}{|v|_{p}^p}\right)^{\frac{1}{p-2}}.
$$
We start by considering the asymptotics of least energy solutions to (\ref{eq:untransformed-problem}) as $\lambda \to \infty$. 

\subsection{The limit $\lambda \to \infty$}
\label{sec:limit-lambda-to-infty}
Consider the limit energy functional 
$$
E_*: H^1(\R^2) \to \R, \qquad E_*(v)= \frac{1}{2} \int_{\R^2} \left( |\nabla v|^2 + v^2 \right) \,dx - \frac{1}{p} \int_{\R^2} |v|^p \,dx.
$$
Similarly as in (\ref{eq:ray-characterization}), for $v \in H^1(\R^2) \setminus \{0\}$ we have 
\begin{equation}\label{eq:energy-infty-lambda}
\sup_{t \ge 0}E_*(t v) = E_*(t_v v)= \Bigl(\frac{1}{2}-\frac{1}{p}\Bigr)\frac{\|v\|_{H^1(\R^2)}^{\frac{2p}{p-2}}}{|v|_{p}^{\frac{2p}{p-2}}}
\end{equation}
with $t_v =\left(\frac{\|v\|_{H^1(\R^2)}^2}{|v|_{p}^{p}}\right)^{\frac{1}{p-2}}$.

\medskip
Observe that for every $v\in H^1(\R^2)$ with $E_*'(v)v=0$ we have $t_{v}=1$ and hence
$$
\sup_{t \ge 0} E_*(t v)= E_*(v).
$$
Define
\begin{equation}\label{eq:def-limit ground state}
\hat{c}_\infty := \inf_{v \in H^1(\R^2) \setminus \{0\}} \sup_{t \ge 0} E_*(t v)
\end{equation}
and let $w_\infty$ denote the unique positive radial solution (see \cite{Kwong}) of the problem 
\begin{equation}\label{eq:limit-infty}
-\Delta w_\infty + w_\infty = |w_\infty|^{p-2}w_\infty, \qquad  w_{\infty} \in C^2(\R^2) \cap H^1(\R^2). 
\end{equation}

Since $E_*'(w_{\infty})w_{\infty}=0$, $t_{w_{\infty}}=1$ and hence 
\begin{equation}
  \label{eq:label-w-infty-saddle}
\sup \limits_{t\geq 0}E_*(tw_{\infty})=E_*(w_{\infty}).
\end{equation}
The following result provides a variational characterization of the limit $c_\infty$, defined in \eqref{eq:def-c-limits}, in terms of $\hat{c}_{\infty}$ and $w_{\infty}$.

\begin{lemma}
\label{energy-asymptotics}
\begin{equation}
 \label{eq:limit-c-infty}
c_\infty = \hat{c}_{\infty}= E_*(w_\infty). 
\end{equation}
\end{lemma}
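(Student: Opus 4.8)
The statement to prove is the identity $c_\infty = \hat c_\infty = E_*(w_\infty)$, where $c_\infty = \lim_{\lambda \to \infty} c_\lambda$, $\hat c_\infty$ is the mountain pass value for the limit functional $E_*$ on $H^1(\R^2)$, and $w_\infty$ is the unique positive radial ground state of \eqref{eq:limit-infty}. The plan is to prove this in three movements: first the easy equality $\hat c_\infty = E_*(w_\infty)$, then the inequality $c_\infty \ge \hat c_\infty$, and finally the reverse inequality $c_\infty \le \hat c_\infty$.

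First, $\hat c_\infty = E_*(w_\infty)$: since $w_\infty$ is a nontrivial critical point of $E_*$, it lies on the Nehari manifold of $E_*$, so by \eqref{eq:label-w-infty-saddle} we have $\sup_{t \ge 0} E_*(t w_\infty) = E_*(w_\infty) \ge \hat c_\infty$. For the reverse, one invokes the standard fact (as in \cite[Theorem 20]{Szulkin-Weth}, exactly the machinery already used in Theorem~\ref{exist-mountain-pass-general}) that the mountain pass value $\hat c_\infty$ is attained by a ground state of $E_*$, and that every positive ground state of $-\Delta w + w = |w|^{p-2}w$ on $\R^2$ coincides, up to translation, with $w_\infty$ by Kwong's uniqueness \cite{Kwong}; hence $\hat c_\infty = E_*(w_\infty)$. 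Here one must also recall that ground states of $E_*$ do not change sign, so uniqueness applies.

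Second, $c_\infty \ge \hat c_\infty$: for each $\lambda$, pick a least energy solution $u_\lambda \in H^+$ with $E_\lambda^+(u_\lambda) = c_\lambda$, extend it by odd reflection to $\tilde u_\lambda \in H^1(\R^2)$, and note $E_*(\tilde u_\lambda) \le 2 E_\lambda^+(u_\lambda) = 2 c_\lambda$ — wait, this direction is the wrong one. Instead, use the ray characterization: from \eqref{eq:ray-characterization} and \eqref{eq:energy-infty-lambda}, for any fixed $v \in H^+ \setminus \{0\}$, regarded as an element of $H^1(\R^2)$ by trivial extension, we have $\|v\|_{\lambda,1}^2 \ge \|v\|_{H^1(\R^2)}^2$ with $\|v\|_{\lambda,1}^2 \to \|v\|_{H^1(\R^2)}^2$ as $\lambda \to \infty$; hence $\sup_{t\ge 0} E_\lambda^+(tv) \to \sup_{t\ge 0} E_*(tv)$, and since $c_\lambda \le \sup_{t\ge 0} E_\lambda^+(tv)$ this gives $c_\infty \le \sup_{t \ge 0} E_*(tv)$ for every such $v$. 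Taking the infimum over $v \in H^+\setminus\{0\}$ yields $c_\infty \le \hat c_\infty^{\,+}$, where $\hat c_\infty^{\,+}$ is the mountain pass level of $E_*$ restricted to (the trivial extension of) $H^+$. The substantive point is then that this restricted infimum equals the full $\hat c_\infty$: one builds, from a near-optimal $v \in H^1(\R^2)$ for $\hat c_\infty$, a function in $H^+$ with comparable energy level by translating $v$ far into the half-space $\{x_1 > 0\}$ and cutting it off near $\partial \R^2_+$ (the translated cutoff costs only $o(1)$ in $H^1$-norm and in $L^p$-norm, hence $o(1)$ in the ratio defining the mountain pass level), and checking that such a translated-truncated bump has finite $\partial_\theta$-energy so that it genuinely lies in $H^+$. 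This shows $c_\infty \le \hat c_\infty$.

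Third, $c_\infty \ge \hat c_\infty$: take least energy solutions $u_k \in H^+$ for $\lambda = \lambda_k \to \infty$, so $E_{\lambda_k}^+(u_k) = c_{\lambda_k} \to c_\infty$; the standard Nehari identity gives $E_{\lambda_k}^+(u_k) = (\tfrac12 - \tfrac1p)\|u_k\|_{\lambda_k,1}^2$, so $(u_k)$ is bounded in each $H^+$-norm and in $H^1(\R^2_+)$, and $\tfrac{1}{\lambda_k^2}|\partial_\theta u_k|_2^2 \to 0$. Extending $u_k$ by odd reflection to $\tilde u_k \in H^1(\R^2)$ and using that $E_*(\tilde u_k) = 2\big(E_{\lambda_k}^+(u_k) - \tfrac{1}{2\lambda_k^2}|\partial_\theta u_k|_2^2\big) \to 2 c_\infty$, while $E_*'(\tilde u_k)\tilde u_k = E_{\lambda_k}^+{}'(u_k)u_k \cdot 2 - \tfrac{2}{\lambda_k^2}|\partial_\theta u_k|_2^2 = -\tfrac{2}{\lambda_k^2}|\partial_\theta u_k|_2^2 \to 0$, one sees $\tilde u_k$ is an (almost) Palais–Smale-type sequence for $E_*$ at level $2c_\infty$ on the Nehari manifold; invoking the concentration–compactness / profile decomposition for $E_*$ on $\R^2$ (as in Willem's book, \cite{Willem}) one extracts, after translations, at least two nontrivial bubbles (one from $\tilde u_k^+$, one from $\tilde u_k^-$, since the odd symmetry forbids them from being the same bump), each carrying energy $\ge \hat c_\infty$, whence $2 c_\infty \ge 2 \hat c_\infty$, i.e.\ $c_\infty \ge \hat c_\infty$. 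Combining the two inequalities with the first movement gives $c_\infty = \hat c_\infty = E_*(w_\infty)$.

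\textbf{Main obstacle.} The delicate part is the lower bound $c_\infty \ge \hat c_\infty$. The loss of compactness in $H^1(\R^2)$ (only compactness modulo translations is available) means one cannot simply pass to a weak limit; one needs a splitting/Brezis–Lieb argument on the odd-reflected solutions to exhibit two separate bubbles, and to control the interaction/cross terms in both the quadratic form and the $L^p$-norm so that the total energy is at least $2\hat c_\infty$. Care is also needed because the bubbles arising from $\tilde u_k^+$ and $\tilde u_k^-$ could, a priori, drift toward $\partial \R^2_+$ and interact; one uses the symmetry result Theorem~\ref{exist-mountain-pass}(ii) (the solutions are maximised on the $x_1$-axis and symmetric about it) together with the $\partial_\theta$-energy control to rule this out or to handle it. Establishing that the translated-and-truncated competitor used for the upper bound $c_\infty \le \hat c_\infty$ actually lies in $H^+$ — i.e.\ has finite angular derivative energy — is routine once one observes the cutoff support stays in a fixed cone, but it must be stated.
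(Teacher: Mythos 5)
Your first two movements are sound and essentially reproduce the paper's argument: the identity $\hat c_\infty = E_*(w_\infty)$ via symmetrization, the compactness of $H^1_{rad}(\R^2)\hookrightarrow L^p(\R^2)$ and Kwong's uniqueness, and the upper bound $c_\infty \le \hat c_\infty$ by translating a compactly supported near-minimizer of the $E_*$-quotient far into $\R^2_+$ and noting that the angular term $\lambda^{-2}t^2|\partial_\theta \tilde\psi_n|_2^2$ vanishes as $\lambda\to\infty$ uniformly on the relevant compact range of $t$. (The heading of your second movement reads ``$c_\infty\ge\hat c_\infty$'' but its content proves ``$\le$''; that is only a mislabel.)

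The gap is in your third movement, the lower bound $c_\infty\ge\hat c_\infty$. This inequality has a one-line proof which you started toward and then abandoned: for $v\in H^+\setminus\{0\}$ extended by zero to $\R^2$ one has $E_\lambda^+(tv)=E_\lambda(tv)\ge E_*(tv)$ for every $t\ge 0$, hence $\sup_{t\ge0}E_\lambda^+(tv)\ge\sup_{t\ge0}E_*(tv)\ge\hat c_\infty$, and taking the infimum over $v$ gives $c_\lambda\ge\hat c_\infty$ for every $\lambda$, so $c_\infty\ge\hat c_\infty$. Your substitute via odd reflection and profile decomposition does not work as written, for two concrete reasons. First, you assert $\lambda_k^{-2}|\partial_\theta u_k|_2^2\to0$; this does not follow from the boundedness of $\|u_k\|_{\lambda_k,1}$ (the angular term could remain of order one), and in the paper establishing precisely this fact is the hard part of the proof of Theorem~\ref{exist-mountain-pass}(iii): it requires showing $\tau_k\to\infty$ via the Esteban--Lions nonexistence theorem \cite{Esteban-Lions} and then excluding $\tau_k/\lambda_k\to d>0$ by an energy comparison --- an analysis that itself uses the present lemma, so invoking it here is circular. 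Second, the odd reflections $\tilde u_k$ are exact critical points of $E_{\lambda_k}$ but not a Palais--Smale sequence for $E_*$: one has $E_*'(\tilde u_k)\phi=-\lambda_k^{-2}\int_{\R^2}\partial_\theta\tilde u_k\,\partial_\theta\phi\,dx$, and since $|\partial_\theta\phi|_2$ is not controlled by $\|\phi\|_{H^1(\R^2)}$ this error is not $o(\|\phi\|_{H^1})$, so the standard profile decomposition for $E_*$ cannot be applied to $(\tilde u_k)_k$. Even if both issues were repaired, the two-bubble argument would only re-derive, with far more machinery, an inequality that the pointwise monotonicity $E_\lambda\ge E_*$ on trivially extended functions gives immediately.
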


\begin{proof}
We first prove the second equality in \eqref{eq:limit-c-infty}. Since the proof is standard, we only sketch the argument. By (\ref{eq:label-w-infty-saddle}), we have $\hat{c}_{\infty}\leq E_*(w_{\infty})$. On the other hand, using Schwarz symmetrization and \eqref{eq:energy-infty-lambda}, it is easy to see that
$$
\hat{c}_\infty = \inf_{v \in H^1_{rad}(\R^2) \setminus \{0\}} \sup_{t \ge 0} E_*(t v).
$$
Proceeding as in Theorem 20 and Remark 17 in \cite{Szulkin-Weth} and using the compactness of the embedding $H^1_{rad}(\R^2)\hookrightarrow L^{p}(\R^2)$, one can prove that $\hat{c}_{\infty}$ is attained at a positive radial solution of \eqref{eq:limit-infty}.  By uniqueness, we then deduce that $\hat{c}_{\infty}=E_*(w_{\infty})$.
\medskip

Next, we prove the first equality in \eqref{eq:limit-c-infty}. Identifying $v \in H^+$ with its trivial extension in $H$, we see that $E_{\lambda}^+(v)= E_{\lambda}(v) \ge E_*(v)$ for any $v \in H^+$ and any $\lambda>0$. Hence $c_{\lambda}\geq \hat{c}_{\infty}$ for any $\lambda>0$ by (\ref{eq:c-lambda-q-characterization}) and (\ref{eq:def-limit ground state}). Taking the limit as $\lambda \to \infty$, we obtain that $c_{\infty}\geq \hat{c}_{\infty}$.

\medskip
To see the opposite inequality, we let $v \in H^1(\R^2) \setminus \{0\}$ be arbitrary. Let $t_v>0$ be as in \eqref{eq:energy-infty-lambda}, which implies that 
$$
0 = \frac{\partial_t \big|_{t_v} E_*(t v)}{t_v} = \|v\|_{H^1(\R^2)}^2- t_v^{p-2} \int_{\R^2} |v|^{p} \,dx. 
$$
From this we find that
$$
\|v\|^2_{H^1(\R^2)} < (2t_v)^{p-2} \int_{\R^2} |v|^p \,dx.
$$
Since $C_c^\infty(\R^2)$ is dense in $H^1(\R^2)$, there exists a sequence $\psi_n \in C_c^\infty(\R^2)$ such that $\|v-\psi_n\|_{H^1(\R^2)} \to 0$ as $n \to \infty$, and 
$$
\|\psi_n\|^2_{H^1(\R^2)} < (2t_v)^{p-2} \int_{\R^2} | \psi_n|^p \,dx \qquad \text{for all $n \in \N$.} 
$$
This implies that 
\begin{equation}
  \label{eq:energy-limit-proof-1}
\sup_{t \ge 0} E_*(t \psi_n)= \sup_{0 \le t \le 2t_v} E_*(t \psi_n) \to \sup_{0 \le t \le 2t_v} E_*(t v)= E_*(t_v v) \qquad \text{as $n \to \infty$.} 
\end{equation}
Next, we fix $n \in \N$ and choose $y_n \in \R^2$ such that $\tilde \psi_n \in C^\infty_c(\R^2_+) \subset H^+$ for the function $\tilde \psi_n: \R^2_+ \to \R$, $\tilde \psi_n(x)= \psi_n(x-y_n)$. Then there exists ${t}_n>2t_v$ such that 
$$
\|\psi_n\|^2_{\lambda,1} = \|\psi_n\|_{H^1(\R^2_+)}^2 +\frac{1}{\lambda^2} \|\del_\theta \psi_n\|_{L^2(\R^2_+)}^2 < (2{t}_n)^{p-2} \int_{\R^2} |\psi_n|^p \,dx \qquad \text{for all $\lambda \ge 1$.} 
$$
Using the fact that 
$$
\frac{t^2}{\lambda^2} \int_{\R^2_+}|\del_\theta \psi_n|^2 \, dx \to 0 \quad \text{as $\lambda \to \infty$ uniformly in $t \in [0,t_n]$,}
$$
 we find that 
\begin{align}
c_\infty &= \lim_{\lambda \to \infty} c_{\lambda} \le \lim_{\lambda \to \infty} \sup_{t \ge 0} E_{\lambda}^+(t \tilde \psi_n) =
\lim_{\lambda \to \infty} \sup_{0 \le t \le t_n} E_{\lambda}^+(t \tilde \psi_n) \nonumber\\
&= \sup_{0 \le t \le t_n} E_*(t \tilde \psi_n)=\sup_{t \ge 0} E_*(t \tilde \psi_n)=\sup_{t \ge 0} E_*(t \psi_n),   \label{eq:energy-limit-proof-2}
\end{align}

Combining (\ref{eq:energy-limit-proof-1}) and (\ref{eq:energy-limit-proof-2}), it follows that 
$$ 
c_\infty \le E_*(t_v v) = \sup_{t \ge 0} E_*(t v).
$$
Since $v \in H^1(\R^2) \setminus \{0\}$ was arbitrary, we conclude that $c_{\infty}\leq \hat{c}_{\infty}$. This completes the proof of the theorem.
\end{proof}
Now we are in a position to prove Theorem \ref{exist-mountain-pass}.
\begin{proof}[Proof of Theorem~\ref{exist-mountain-pass}.] 
The existence statement in (i) is a direct consequence of Theorem \ref{exist-mountain-pass-general}, whereas the symmetry property stated in Theorem~\ref{exist-mountain-pass}~(ii) is a special case of Theorem \ref{General half space symmetry}.

\medskip
Next, we prove the asymptotics in (iii). In what follows, the functions in $H^+$ are extended trivially outside $\R^2_+$. Assume that $1\leq \lambda_{k} \to \infty$ and, for every $k\in \N$, let $u_{k}\in H^+$ denote a positive least energy solution of \eqref{eq:untransformed-problem} for $\lambda=\lambda_k$. Observe that for $k \in \N$, $$
\|u_k\|_{\lambda_k,1}^2= |u_k|_{p}^p
$$
and   
\begin{align*}
c_{1} \ge c_{\lambda_k} = E_{\lambda_k}^+(u_k) &= \Bigl(\frac{1}{2}-\frac{1}{p}\Bigr)\|u_k\|_{\lambda_k,1}^2 
= \Bigl(\frac{1}{2}-\frac{1}{p}\Bigr) |u_k|_{p}^p \ge c_\infty>0.
\end{align*}

Since 
$$
\|u_k\|_{H^1_0(\R^2_+)}^2 \le \|u_k\|_{\lambda_k,1}^2 \qquad \text{for every $k \in \N$,}
$$
we conclude that $(u_k)_k$ is bounded in $H^1_0(\R^2_+) \subset H^1(\R^2)$. Moreover, $|u_k|_{p}$ remains bounded away from zero. From Lions' Lemma \cite[Lemma I.1]{Lions} and Theorem~\ref{General half space symmetry}, it thus follows that, after passing to a subsequence, there exists a sequence of numbers 
$\tau_k \in (0,\infty)$ such that $w_k \weak w \not = 0$ in $H^1(\R^2)$ for the functions $w_k:= u_k( \cdot + (\tau_k,0))$. Observe that $w\geq 0$ a.e. in $\R^2$.

\medskip
We first claim that 
\begin{equation}
  \label{eq:tau-k-infty}
\tau_k \to \infty \qquad \text{as $k \to \infty$.}  
\end{equation}
Indeed, suppose by contradiction that $(\tau_k)_k$ contains a bounded subsequence.  Then we may again pass to a subsequence with the property that 
$$
u_k \weak u \not =0 \qquad \text{in $H^1_0(\R^2_+)$,}
$$ 
where $u\geq 0$ a.e. in $\R^2_+$. For $\phi \in C^\infty_c(\R^2_+)$ and $R>0$ with $\supp \, \phi \subset B_R(0)$ we then have  
$$
\frac{1}{\lambda_k^2} \int_{\R^2_+} (\del_\theta u_k)(\del_\theta \phi) dx
\le  \frac{R^2}{\lambda_k^2} \|\nabla u_k\|_{L^2(R^2_+)} \| \nabla \phi\|_{L^2(R^2_+)} \to 0 \qquad \text{as $k \to \infty$} 
$$
and thus 
$$
\int_{\R^2_+} \Bigl( \nabla u \cdot \nabla \phi + u \phi -u^{p-1} \phi\Bigr)dx = \lim_{k \to \infty}\Bigl(\langle u_k,\phi \rangle_{\lambda_k,1}  - \int_{\R^2_+} u_k^{p-1} \phi\, dx\Bigr)= 0.   
$$
Hence $u \in H^1_0(\R^2_+)$ is a nontrivial nonnegative weak solution of the problem 
$$
-\Delta u + u = u^{p-1}\quad \text{in $\R^2_+$},\qquad u = 0 \quad \text{on $\del \R^2_+$}
$$
which contradicts a classical nonexistence result of Esteban and Lions in \cite{Esteban-Lions}. Thus (\ref{eq:tau-k-infty}) is true.

\medskip
We now claim that 
\begin{equation}
  \label{eq:tau-k-infty-refined}
\frac{\tau_k}{\lambda_k} \to 0 \qquad \text{as $k \to \infty$.}  
\end{equation}
Before proving the claim, observe that by weak lower semicontinuity,
\begin{align}
  \tau_k^{-2}& \int_{\R^2_+}|\del_\theta u_k|^2dx =  \tau_k^{-2} \int_{\R^2_+}|x_1 \partial_{x_2}u_k- x_2 \partial_{x_1}u_k|^2  dx \nonumber\\
= & \tau_k^{-2} \int_{\R^2}|(x_1+\tau_k) \partial_{x_2}w_k- x_2 \partial_{x_1}w_k|^2  dx \nonumber \ge \int_{B_{R}(0)}\bigl|\frac{x_1+\tau_k}{\tau_k} \partial_{x_2}w_k - \frac{x_2}{\tau_k} \partial_{x_1}w_k\bigr|^2  dx\nonumber\\
\ge &  \int_{B_{R}(0)} |\partial_{x_2}w|^2 dx + o(1) \qquad \text{for every $R>0$,} \label{neweq-1}
 \end{align}
whereas for $R>0$ large enough, 
$$
\int_{B_{R}(0)} |\partial_{x_2}w|^2 dx > 0 
$$
since $w \in H^1_0(\R^2_+)$ is not identically zero.

\medskip
Now, in order to prove \eqref{eq:tau-k-infty-refined}, assume by contradiction that, passing to a subsequence,  
$$
\frac{\tau_k}{\lambda_k} \to d \in (0,\infty] \qquad \text{as $k \to \infty$.}
$$
In the case where $d=\infty$ the estimate (\ref{neweq-1}) implies that 
$$
\frac{1}{\lambda_k^2} \int_{\R^2_+}|\del_\theta u_k|^2  dx  \to \infty  \qquad \text{as $k \to \infty$}
$$
and therefore 
$$
\|u_k\|_{\lambda_k,1} \to \infty \qquad \text{as $k \to \infty$}
$$
which contradicts the fact that $\|u_k\|_{\lambda_k,1}$ is bounded in $k$.

\medskip

Therefore we have $d< \infty$ and from \eqref{neweq-1}, 
\begin{equation}
\label{neweq-2}
\liminf \limits_{k \to \infty} \frac{1}{\lambda_k^2} 
\int_{\R^2_+}|\del_\theta u_k|^2dx \ge d^2  \int_{\R^2} |\partial_{x_2}w|^2 dx.
\end{equation}
Notice that in this case, $w \in H^1(\R^2)$ is a weak solution of 
\begin{equation}
  \label{eq:equation-w-limit-c-pos}
-\Delta w + d^2  \partial_{x_2 x_2} w + w = w^{p-1} \qquad \text{on $\R^2$.}
\end{equation}
Indeed, let $\phi \in C^\infty_c(\R^2)$ and let $\phi_k \in C^\infty_c(\R^2_+)$ be defined by 
$$
\phi_k(x_1,x_2) = \phi(x_1- \tau_k,x_2)
$$
for $k$ sufficiently large. We then have 
\begin{align*}
\frac{1}{\lambda_k^2} &\int_{\R^2_+} (\del_\theta u_k)(\del_\theta \phi_k)  dx
=\frac{(d^2+o(1))}{\tau_k^2} \int_{\R^2_+} (x_1 \partial_{x_2} u_k- x_2 \partial_{x_1} u_k)(x_1 \partial_{x_2} \phi_k-x_2 \partial_{x_1} \phi_k)  dx\\
&=(d^2+o(1)) \int_{\R^2} \Bigl(\frac{x_1+\tau_k}{\tau_k} \partial_{x_2} w_k- \frac{x_2}{\tau_k} \partial_{x_1} w_k\Bigr)\Bigl(\frac{x_1+\tau_k}{\tau_k} \partial_{x_2} \phi-\frac{x_2}{\tau_k} \partial_{x_1} \phi\Bigr)  dx\\
&=d^2 \int_{\R^2} \partial_{x_2} w \partial_{x_2} \phi dx + o(1) \qquad \text{as $k \to \infty$}
\end{align*}
and therefore 
\begin{align*}
&\int_{\R^2_+} \Bigl( \nabla w \cdot \nabla \phi + d^2  \partial_{x_2} w \partial_{x_2} \phi + w \phi -w^{p-1} \phi\Bigr)dx\\
  &= \lim_{k \to \infty} \int_{\R^2_+} \Bigl( \nabla u_k \cdot \nabla \phi_k + \frac{1}{\lambda_k^2}(\del_\theta u_k)(\del_\theta \phi_k)+ u_k \phi_k -u_k^{p-1} \phi_k \Bigr)dx\\
&= \lim_{k \to \infty}\Bigl( \langle u_k,\phi  \rangle_{\lambda_k,1}  - \int_{\R^2_+} u_k^{p-1} \phi_k dx\Bigr)= 0.   
\end{align*}
Hence $w$ satisfies (\ref{eq:equation-w-limit-c-pos}) in this case. By (\ref{neweq-2}) and weak lower semicontinuity, this implies that 
\begin{align*}
\sup_{t \ge 0}&\Bigl( E_*(t w) + \frac{t^2 d^2}{2}  \int_{\R^2}|\partial_{x_2} w|^2dx \Bigr)= \Bigl(\frac{1}{2}-\frac{1}{p}\Bigr) \Bigl( \|w\|_{H^1(\R^2)}^2 +d^2 \int_{\R^2}|\partial_{x_2} w|^2 dx\Bigr) \\
&\le \Bigl(\frac{1}{2}-\frac{1}{p}\Bigr) \lim_{k \to \infty} \|u_k\|_{\lambda_k,1}^2= \lim_{k \to \infty} E_{\lambda_k}(u_k) = \lim_{k \to \infty} c_{\lambda_k,1} =  c_\infty.
\end{align*}
On the other hand, we have 
$$
c_\infty \le  \sup_{t \ge 0} E_*(t w) <  \sup_{t \ge 0}\Bigl( E_*(t w) + t^2 d^2 \int_{\R^2}|\partial_{x_2} w|^2dx \Bigr).
$$
Combining these inequalities yields a contradiction. Hence (\ref{eq:tau-k-infty-refined}) holds. 

\medskip
The same argument as above with $d=0$ yields that $w \geq 0$ is a solution of the limit problem 
$$
-\Delta w + w = w^{p-1} \qquad \text{in $\R^2$}
$$
and by uniqueness we have $w = w_\infty$ after adding a finite translation to the sequence $\tau_k$ if necessary. 

\medskip
We finish the proof by showing that $w_k \to w$ strongly in $H^1(\R^2)$. Indeed, by weak lower semicontinuity,
\begin{align*}
c_\infty &= \Bigl(\frac{1}{2}-\frac{1}{p}\Bigr) \|w\|_{H^1(\R^2)}^2 \le 
\Bigl(\frac{1}{2}-\frac{1}{p}\Bigr) \liminf_{k \to \infty} \|w_k\|_{H^1(\R^2)}^2\\
&= \Bigl(\frac{1}{2}-\frac{1}{p}\Bigr) \liminf_{k \to \infty} \|u_k\|_{H^1(\R^2_+)}^2 \le \Bigl(\frac{1}{2}-\frac{1}{p}\Bigr) \lim_{k \to \infty} \Bigl(\|u_k\|_{\lambda_k,1}^2\Bigr)\\
&= \lim_{k \to \infty}c_{\lambda_k}= c_\infty.
\end{align*}
Hence equality holds in all steps. Since $H^1(\R^2)$ is uniformly convex, this shows that $w_k \to w$ strongly in $H^1(\R^2)$, as claimed and this completes the proof of the theorem.
\end{proof}

\subsection{The limit $\lambda \to 0$}
\label{sec:scale-invar-case}
Next we consider the asymptotics of least energy solutions to (\ref{eq:untransformed-problem}) in the case $q=1$ as $\lambda \to 0$. To find a suitable limit problem, we consider the transformed Dirichlet problem 
\begin{equation} \label{transformed lambda to zero equation}
\left \{
  \begin{aligned}
-\Delta v - \del_\theta^2 v + \lambda^2 v &= |v|^{p-2} v &&\qquad \text{in $\R^2_+$,}\\
v &= 0&&\qquad \text{on $\partial \R^2_+$.}     
  \end{aligned}
\right.
\end{equation}
Weak solutions $v \in H^+$ of (\ref{transformed lambda to zero equation}) are critical points of the associated energy functional given by
$$
J_\lambda: H^+ \to \R, \quad J_\lambda(v)= \frac{1}{2} \left( | \nabla v|_2^2 + | \del_\theta v|_2^2 + \lambda^2 |v|_2^2 \right) - \frac{1}{p} \|v\|_p^p .
$$
These notions can be related to the original problem as follows: For $\lambda>0$, consider the transformation
$$
H^+ \ni u \mapsto v \in H^+, \quad v(x)=\lambda^\frac{2}{p-2} u(\lambda x)
$$
so that 
\begin{equation} \label{transformed energy relation}
	J_\lambda (v) = \lambda^\frac{4}{p-2} E_{\lambda}^+(u). 
	\end{equation}
Moreover, $u$ is a (least energy) solution of (\ref{eq:untransformed-problem}) if and only if $v$ is a (least energy) solution of (\ref{transformed lambda to zero equation}). 

In order to prove Theorem~\ref{rescaled convergence}, let $(\lambda_k)_k$ be sequence of numbers $\lambda_k \le 1$ such that $\lambda_k \to 0$ as $k \to \infty$ and let $u_k \in H^+$ be positive least energy solutions of \eqref{eq:untransformed-problem} for $\lambda=\lambda_k$.

\medskip
For any $k\in \N$, set
$$ 
v_k(x)= \lambda_k^{\frac{2}{p-2}}u_k(\lambda_k x), \quad v_k \in H^+.
$$

\begin{lemma} \label{boundedness}
  The sequence $(v_k)_k$ is bounded in $H^+$.
\end{lemma}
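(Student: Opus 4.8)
The plan is to combine the scaling identity \eqref{transformed energy relation} with the fact that the rescaled functional $J_\lambda$ is monotone in $\lambda$, and then to invoke the angular Poincar\'e inequality of Corollary~\ref{H+ properties}(i).

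\textbf{Step 1: a uniform energy bound.} Let $d_\lambda$ denote the least energy level of problem \eqref{transformed lambda to zero equation}. I would first observe that the transformation $H^+\to H^+$, $u\mapsto v$ with $v(x)=\lambda^{2/(p-2)}u(\lambda x)$, is a bijection which sends each ray $\{tu:t\ge0\}$ onto the ray $\{tv:t\ge0\}$; hence, transporting the mountain pass characterization \eqref{eq:c-lambda-q-characterization} of $c_\lambda$ through this bijection and using \eqref{transformed energy relation} together with the correspondence of (least energy) solutions, one gets
$$
d_\lambda \;=\; \inf_{v\in H^+\setminus\{0\}}\ \sup_{t\ge 0} J_\lambda(tv)\;=\;\lambda^{\frac{4}{p-2}}c_\lambda .
$$
Since $0<\lambda_1\le\lambda_2$ implies $J_{\lambda_1}(tv)\le J_{\lambda_2}(tv)$ for all $t\ge0$ and $v\in H^+$ (the only $\lambda$-dependent term $\tfrac{\lambda^2}{2}|v|_2^2$ is nondecreasing in $\lambda$), the function $\lambda\mapsto d_\lambda$ is nondecreasing. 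As $\lambda_k\le 1$, this yields $J_{\lambda_k}(v_k)=d_{\lambda_k}\le d_1=c_1<\infty$ for all $k$, the finiteness of $c_1$ being clear (e.g.\ from the existence of a ground state in Theorem~\ref{exist-mountain-pass-general}, or simply because $\sup_{t\ge0}E_1^+(t\psi)<\infty$ for any fixed $\psi\in H^+\setminus\{0\}$).

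\textbf{Step 2: from energy to norm bound.} Because $v_k$ is a weak solution of \eqref{transformed lambda to zero equation}, we have $J_{\lambda_k}'(v_k)v_k=0$, which gives the standard identity
$$
J_{\lambda_k}(v_k)=\Bigl(\tfrac12-\tfrac1p\Bigr)\Bigl(|\nabla v_k|_2^2+|\del_\theta v_k|_2^2+\lambda_k^2|v_k|_2^2\Bigr).
$$
Combining this with Step~1 produces a constant $M$, independent of $k$, with $|\nabla v_k|_2^2+|\del_\theta v_k|_2^2+\lambda_k^2|v_k|_2^2\le M$; in particular $|\nabla v_k|_2^2+|\del_\theta v_k|_2^2\le M$. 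Corollary~\ref{H+ properties}(i) then gives $|v_k|_2^2\le|\del_\theta v_k|_2^2\le M$, so $|\nabla v_k|_2^2+|\del_\theta v_k|_2^2+|v_k|_2^2\le 2M$ for every $k$, which is exactly the asserted boundedness of $(v_k)_k$ in $H^+$.

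The argument is essentially routine; the only point that requires care is that the bound on $d_{\lambda_k}$ be genuinely uniform in $k$. This is precisely where the hypothesis $\lambda_k\le 1$ enters: since $d_\lambda=\lambda^{4/(p-2)}c_\lambda$ and $c_\lambda\to c_\infty>0$ as $\lambda\to\infty$, the quantity $d_\lambda$ is unbounded for large $\lambda$, so it is the monotonicity of $d_\lambda$ combined with $\lambda_k\le 1$ (rather than mere convergence $\lambda_k\to 0$) that delivers the uniform control. Note also that a crude estimate of $c_{\lambda_k}$ via a fixed test function would fail here, since $\|\psi\|_{\lambda_k,1}^2$ blows up like $\lambda_k^{-2}$ for any fixed $\psi\in H^+\setminus\{0\}$ (there are no nonzero radial functions in $H^+$), which is why one must use the sharp scaling identity $d_\lambda=\lambda^{4/(p-2)}c_\lambda$.
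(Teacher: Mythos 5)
Your proof is correct and follows essentially the same route as the paper: a uniform bound on $J_{\lambda_k}(v_k)$ coming from the monotonicity of $J_\lambda$ in $\lambda$ together with $\lambda_k\le 1$, then the Nehari identity $J_{\lambda_k}(v_k)=\bigl(\tfrac12-\tfrac1p\bigr)\bigl(\|v_k\|_{1,0}^2+\lambda_k^2|v_k|_2^2\bigr)$, and finally the angular Poincar\'e inequality of Corollary~\ref{H+ properties}(i) to control the $L^2$-norm. The only cosmetic difference is that the paper bounds $J_{\lambda_k}(v_k)\le\sup_{t\ge0}J_1(t\phi)$ for a fixed $\phi\in C_c^\infty(\R^2_+)\setminus\{0\}$ rather than invoking the identity $d_\lambda=\lambda^{4/(p-2)}c_\lambda$ and the monotonicity of the level $d_\lambda$ itself.
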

\begin{proof}
  By Corollary~\ref{H+ properties}, it suffices to show that there exists $C>0$ such that 
	$$
	\|v_k\|_{1,0} \leq C \qquad \text{for all $k \in \N$.}
	$$
By the remarks above, $v_k$ is a least energy solution of the transformed problem \eqref{transformed lambda to zero equation} with $\lambda=\lambda_k$. Multiplying this equation with $v_k$ and integrating by parts yields
	\begin{equation} \label{solution norm}
	\|v_k\|_{1,0}^2 +\lambda_k^2 |v_k|_2^2 = |v_k|_p^p \qquad \text{for all $k \in \N$.}
	\end{equation}
	Moreover, we have 
	$$
	J_{\lambda_k}(v_k) = \inf_{v \in H^+ \setminus \{0\}} \sup_{t \geq 0} J_{\lambda_k}(tv).
	$$
 Fix $\phi \in C_c^\infty(\R_+^2) \setminus \{ 0\}$. Since $v_{k}$ is a least energy solution of \eqref{transformed lambda to zero equation} for $\lambda=\lambda_k \le 1$, we have
$$
	J_{\lambda_k}(v_k) \leq \sup_{t \geq 0} J_{\lambda_k}(t \phi) \leq \sup_{t \geq 0} J_1(t \phi) =: C_0
	$$
	where, clearly, $C_0$ is independent of $k$. 
%
\medskip	
We can then use \eqref{solution norm} to get
	$$
	J_{\lambda_k}(v_k) = \left(\frac{1}{2} - \frac{1}{p} \right)\Bigl(\|v_k\|_{1,0}^2 +\lambda_k^2 |v_k|_2^2\Bigr) \geq \left(\frac{1}{2} - \frac{1}{p} \right) \|v_k\|_{1,0}^2 
	$$
	and hence
	$$
	\|v_k\|_{1,0}^2 \leq \frac{C_0}{\frac{1}{2} - \frac{1}{p}} \qquad \text{for all $k \in \N$.}
	$$
      \end{proof}
           
As a consequence of Lemma~\ref{boundedness}, we can pass to a subsequence and assume
	$$
	v_k \weakto v^*  \quad \text{in $H^+$.}
	$$
\begin{lemma}
  The weak limit $v^*$ is a nontrivial weak solution of (\ref{eq:untransformed-problem-limit-lambda-0}).
\end{lemma}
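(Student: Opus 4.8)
The plan is to pass to the limit in the weak formulation of the transformed problem \eqref{transformed lambda to zero equation} and to exploit the compact embedding $H^+\hookrightarrow L^p(\R^2_+)$ from Corollary~\ref{H+ properties} twice: once to identify the limiting nonlinearity, and once to guarantee that $v^*$ does not vanish.

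\textbf{Setup.} By the discussion preceding Lemma~\ref{boundedness}, each $v_k$ is a least energy, hence weak, solution of \eqref{transformed lambda to zero equation} with $\lambda=\lambda_k$, i.e.
\[
\int_{\R^2_+}\!\bigl(\nabla v_k\cdot\nabla\phi+(\del_\theta v_k)(\del_\theta\phi)\bigr)dx+\lambda_k^2\!\int_{\R^2_+}\! v_k\phi\,dx=\int_{\R^2_+}\!|v_k|^{p-2}v_k\,\phi\,dx\qquad\text{for all }\phi\in H^+.
\]
By Lemma~\ref{boundedness} the sequence $(v_k)_k$ is bounded in $H^+$; along the subsequence with $v_k\weakto v^*$ in $H^+$, Corollary~\ref{H+ properties}(ii) gives $v_k\to v^*$ strongly in $L^\rho(\R^2_+)$ for every $\rho\in(2,\infty)$, and, since $H^+\hookrightarrow H^1_0(\R^2_+)$, the sequence $(v_k)_k$ is also bounded in $L^2(\R^2_+)$.

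\textbf{Passing to the limit.} I would fix $\phi\in H^+$ and treat the three terms separately. The map $w\mapsto\int_{\R^2_+}(\nabla w\cdot\nabla\phi+(\del_\theta w)(\del_\theta\phi))\,dx$ is a bounded linear functional on $(H^+,\langle\cdot,\cdot\rangle_{1,0})$, hence converges along $v_k\weakto v^*$. The middle term vanishes, since $\lambda_k^2\bigl|\int_{\R^2_+}v_k\phi\,dx\bigr|\le\lambda_k^2\,|v_k|_2\,|\phi|_2\to0$ because $\lambda_k\to0$ and $(|v_k|_2)_k$ is bounded. For the right-hand side, $v_k\to v^*$ in $L^p(\R^2_+)$ implies $|v_k|^{p-2}v_k\to|v^*|^{p-2}v^*$ in $L^{p/(p-1)}(\R^2_+)$ by continuity of the Nemytskii operator, so $\int_{\R^2_+}|v_k|^{p-2}v_k\phi\,dx\to\int_{\R^2_+}|v^*|^{p-2}v^*\phi\,dx$. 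Letting $k\to\infty$ shows that $v^*\in H^+$ satisfies the weak formulation of \eqref{eq:untransformed-problem-limit-lambda-0}.

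\textbf{Nontriviality and main obstacle.} Testing the equation for $v_k$ against $v_k$ gives \eqref{solution norm}, $\|v_k\|_{1,0}^2+\lambda_k^2|v_k|_2^2=|v_k|_p^p$, so in particular $\|v_k\|_{1,0}^2\le|v_k|_p^p$. Combining with the continuous embedding $H^+\hookrightarrow L^p(\R^2_+)$, say $|w|_p\le C_S\|w\|_{1,0}$, one obtains $\|v_k\|_{1,0}^2\le C_S^{\,p}\|v_k\|_{1,0}^p$, hence $\|v_k\|_{1,0}\ge c_0:=C_S^{-p/(p-2)}>0$ uniformly in $k$, and therefore $|v_k|_p^p\ge\|v_k\|_{1,0}^2\ge c_0^2$. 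Passing to the limit with the strong $L^p$ convergence yields $|v^*|_p^p\ge c_0^2>0$, so $v^*\not\equiv0$. The only point that must not be overlooked is that weak convergence in $H^+$ by itself does not exclude $v^*\equiv0$; this is exactly where the compactness of $H^+\hookrightarrow L^p(\R^2_+)$ is indispensable, since it both legitimizes the passage to the limit in the nonlinearity and transfers the uniform lower bound $|v_k|_p\ge c_0^{2/p}$ to $v^*$. No further difficulty is anticipated.
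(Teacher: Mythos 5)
Your argument is correct. The passage to the limit in the weak formulation is exactly the paper's: the bilinear part converges by weak convergence in $H^+$, the term $\lambda_k^2\int v_k\phi\,dx$ vanishes because $(|v_k|_2)_k$ is bounded (via Corollary~\ref{H+ properties}(i)) and $\lambda_k\to 0$, and the nonlinearity converges by the compact embedding $H^+\hookrightarrow L^p(\R^2_+)$ together with continuity of the Nemytskii map. Where you differ is in the nontriviality step. The paper argues by contradiction: it introduces the Sobolev constant $C=\inf\|u\|_{1,0}/|u|_p$, derives the two-sided bound \eqref{upper-lower energy bounds} on $J_{\lambda_k}(v_k)$ from the variational characterization of the least energy level, and observes that $v^*\equiv 0$ would force $v_k\to 0$ in $L^p$, hence $\|v_k\|_{1,0}\to 0$ by \eqref{solution norm} and $J_{\lambda_k}(v_k)\to 0$, contradicting the lower bound. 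You instead extract directly from \eqref{solution norm} and the same Sobolev inequality the uniform bound $\|v_k\|_{1,0}\ge C_S^{-p/(p-2)}$, hence $|v_k|_p^p\ge c_0^2$, and pass this to the limit using strong $L^p$ convergence. The two arguments rest on identical ingredients (Nehari identity plus the embedding constant), but yours is more economical: it needs neither the functional $J_{\lambda_k}$ nor the least energy property of $v_k$ at this stage, only that $v_k$ is a nontrivial solution, and it avoids the contradiction. The paper's detour has the side benefit of recording the two-sided energy bounds, but for this lemma alone your route is the leaner one.
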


\begin{proof}
Since every $v_k$ is a weak solutions of \eqref{eq:untransformed-problem}, for any test function $\varphi\in C^{\infty}_c(\R^2_+)$ we have
$$
	\int_{\R^2_+} \left(  \nabla v_k \cdot \nabla \phi + \del_\theta v_k \del_\theta \phi \right) \, dx = \int_{\R^2_+} |v_k|^{p-2} v_k \phi \, dx - \lambda_k^2 \int_{\R^2_+} v_k \phi \, dx.
	$$
	
Besides, since $v_k \rightharpoonup v^*$ weakly in $H^+$ and $\lambda_k \to 0^+$ as $k \to \infty$,
$$
	\int_{\R^2_+} \left( \nabla v_k \cdot \nabla \phi + \del_\theta v_k \del_\theta \phi \right) \, dx - \lambda_k^2 \int_{\R^2_+}  v_k \phi \, dx\to \int_{\R^2_+} \left( \nabla v^* \cdot \nabla \phi + \del_\theta v^* \del_\theta \phi \right) \, dx,
	$$
	and 
	$$
	\int_{\R^2_+} |v_k|^{p-2} v_k \phi \, dx  \to \int_{\R^2+} |v^*|^{p-2} v^* \phi \, dx 
	$$
as a consequence of the compact embedding $H^+ \hookrightarrow L^p(\R^2_+)$. It then follows that $v^* \in H^+$ is a weak solution of 
	$$
	-\Delta v^* - \del_\theta^2 v^* = |v^*|^{p-2} v^* \quad \text{in $\R_+^2$.}
	$$
	
Next, we prove that $v^* \not \equiv 0$. To do so, first observe that the  embedding $H^+ \hookrightarrow L^p$ yields
	$$
	C:=\inf_{u \in H^+ \setminus \{0\}} \frac{\|u\|_{1,0}}{|u|_p}\in (0,\infty).
	$$
	
Thus, the above comments, together with the fact that $|u|_2^2 \leq | \del_\theta u |_2^2 \leq \|u\|_{1,0}^2$ for $u \in H^+$ (see Corollary~\ref{H+ properties}), imply that 
$$
C^2 =
	\inf_{u\in H^+\setminus \{0\}} \frac{\|u\|_{1,0}^{2}}{|u|_p^2} \leq
	\inf_{u\in H^+\setminus \{0\}} \frac{\|u\|_{1,0}^2 + \lambda_k^2 |u|_2^2}{|u|_p^2} \leq 2 \inf_{u\in H^+\setminus \{0\}} \frac{\|u\|_{1,0}^2}{|u|_p^2} = 2  C^2 .
	$$
Recalling also that 
$$
	J_{\lambda_k}(v_k) = \inf_{u\in H^+\setminus \{0\}} \left(\frac{1}{2} - \frac{1}{p} \right) \left( \frac{\|u\|_{1,0}^2 + \lambda_k^2 |u|_2^2}{|u|_p^2}\right)^\frac{p}{p-2},
	$$
we thus have
	\begin{equation} \label{upper-lower energy bounds}
	\left(\frac{1}{2}-\frac{1}{p} \right)C^\frac{2p}{p-2}  \leq J_{\lambda_k}(v_k) \leq \left(\frac{1}{2}-\frac{1}{p}\right)  \bigl(2 C^2\bigr)^\frac{p}{p-2} \qquad \text{for all $k \in \N$.}
      \end{equation}
      Now assume by contradiction that $v^*= 0$, i.e., $v_k \rightharpoonup 0$ weakly in $H^+$. The compact embedding $H^+ \hookrightarrow L^p$ implies $v_k \to 0$ in $L^p$, and therefore $\|v_k\|_{1,0} \to 0$ by \eqref{solution norm}. Hence also $|v_k|_2 \to 0$ by Corollary~\ref{H+ properties}. We then deduce that 
	$$
	J_{\lambda_k}(v_k) = \left(\frac{1}{2} - \frac{1}{p} \right) (\|v_k\|_{1,0}^2 + \lambda_k^2 |v_k|_2^2) \to 0,
	$$
which contradicts \eqref{upper-lower energy bounds}. We conclude that $v^* \not = 0$, as claimed.
\end{proof}

We will now use $\Gamma$-convergence to finish the proof of Theorem \ref{rescaled convergence}: 

\begin{proof}[Proof of Theorem \ref{rescaled convergence}]
  It remains to prove that $v^*$ is a least energy solution of
(\ref{eq:untransformed-problem-limit-lambda-0}), and that $v_k \to v^*$ strongly in $H^+$ as $k \to \infty$. 
\medskip

To deduce these properties from $\Gamma$-convergence theory, we consider the space $X:=H^+ \setminus \{ 0\}$ endowed with the weak topology (induced by $\|\cdot \|_{1,0}$). Consider the functionals $F_k,F:X \to [0,\infty]$ defined by
$$
\begin{aligned}
F_k(u)  := \frac{(\|u\|_{1,0}^2 + \lambda_k^2 |u|_2^2)^\frac{p}{p-2}}{|u|_p^\frac{2p}{p-2}} \quad \hbox{and}\quad F(u)  := \frac{\|u\|_{1,0}^\frac{2p}{p-2}}{|u|_p^\frac{2p}{p-2}} .
	\end{aligned}
$$
Then we have 
$$
F(u) \leq F_k(u) \qquad \text{for every $k\in \N$ and $u\in H^+$.}
$$
Let $(\tilde{u}_k)_k \subset X$ be an arbitrary sequence such that $\tilde{u}_k \to \tilde{u}$ in $X$ (recall that $X$ has the weak topology of $H^+$). The compact embedding $H^+\hookrightarrow L^p(\R^2_+)$ and the weak lower semicontinuity of $\|\cdot\|_{1,0}$ imply
$$
F(\tilde{u}) \leq \liminf_{k \to \infty} F(\tilde{u}_k) \leq \liminf_{k \to \infty} F_k(\tilde{u}_k) .
	$$
On the other hand, for any $\tilde{u} \in X$, the constant sequence $\tilde{u}_k :=\tilde{u}$ satisfies that $\tilde{u}_k \to \tilde{u}$ in $X$ and
$$
F(\tilde{u})=\lim \limits_{k \to \infty} F_k(\tilde{u}_k) .
$$
We conclude that $F_k\overset{\Gamma}{\to}F$. Since,
$$
F_k(v_k) = \inf_{u \in X} F_k(u)
$$
and $v_k \to  v$ in $X$, it follows from \cite[Corollary 7.20]{DalMaso1993Book} that 
\begin{equation}
  \label{eq:conclusion-gamma-convergence}
F(v)=\inf_{u \in X} F(u) = \lim \limits_{k \to \infty}F_k(v_k).
\end{equation}
Consequently,
$$
	\left(\frac{1}{2}-\frac{1}{p} \right) \frac{\|v\|_{1,0}^\frac{2p}{p-2}}{|v|_p^\frac{2p}{p-2}} = \inf_{u \in H^+ \setminus \{0\}} \left(\frac{1}{2}-\frac{1}{p} \right) \frac{\|u\|_{1,0}^\frac{2p}{p-2}}{|u|_p^\frac{2p}{p-2}} = \inf_{u \in H^+ \setminus \{0\}} \sup_{t \geq 0} \left( \frac{t^2}{2}\|u\|_{1,0}^2 - \frac{t^p}{p}  |u|_p^p \right) ,
	$$
	and this implies that $v$ is a least energy solution of (\ref{eq:untransformed-problem-limit-lambda-0}). Moreover, since $v_k \to v$ in $L^p(\R^2_+)$ by the compact embedding $H^+ \hookrightarrow L^p(\R^2_+)$, it follows from (\ref{eq:conclusion-gamma-convergence}) and the definition of the functionals $F_k$ and $F$ that   
        $$
        \|v\|_{1,0}^2  =  \lim_{k\to \infty}\Bigl(\|v_k\|_{1,0}^2 + \lambda_k^2 |v_k|_2^2\Bigr) \ge \limsup_{k \to \infty}\|v_k\|_{1,0}^2 \ge \liminf_{k \to \infty}\|v_k\|_{1,0}^2 \ge \|v\|_{1,0}^2.
        $$
        Consequently, we have
        $$
        \|v_k\|_{1,0} \to \|v\|_{1,0} \qquad \text{as $k \to \infty$,}
        $$
        and the uniform convexity of $(H^+,\|\cdot\|_{1,0})$ implies that 
 $v_k \to v$ strongly in $H^+$ as $k \to \infty$.
 \end{proof}

\section{Radial versus nonradial least energy nodal solutions}
\label{sec:radi-vers-nonr}
In this section we complete the proofs of Theorem~\ref{main theorem} and Theorem~\ref{non-radiality-L-infty-est-intro}. Given the assumptions of Theorem~\ref{main theorem}, the existence of a least energy nodal solution of (\ref{eq:untransformed-problem whole space}) for every $\lambda>0$ is a direct consequence of Corollary \ref{corollary1}.

We will now first prove Theorem~\ref{main theorem}(ii), which will be a consequence of Lemma~\ref{energy-asymptotics} and a result in \cite{Weth}. 

\medskip
We recall that, as in Section \ref{sec:limit-lambda-to-infty} and Section~\ref{sec:funct-analyt-fram}, the energy functionals $E_*,E_{\lambda}: H \to \R$ are defined by
$$
E_*(v):= \frac{1}{2}\int_{\R^2}\Bigl(|\nabla v|^2 + |v|^2\Bigr)dx -\frac{1}{p}\int_{\R^2}|v|^pdx
$$
and
$$
E_{\lambda}(v)=E_*(v) + \frac{1}{\lambda^2}\int_{\R^2}|\del_\theta v|^2dx
$$
for $v\in H$. Moreover, as in Section~\ref{sec:funct-analyt-fram}, we consider the $\lambda$-dependent scalar product $\langle \cdot, \cdot \rangle_\lambda$ defined in \eqref{eq:lambda-product} on $H$ and the corresponding norm $\|\cdot\|_\lambda$. In particular, we shall use $\|\cdot\|_1$ given by
$$
\|u\|_1^2 = \int_{\R^2}\bigl(|\nabla u|^2+ |\del_\theta u|^2 + |u|^2\bigr)\,dx \qquad \text{for $u \in H$.}
$$

\medskip

\begin{proposition}
\label{energy-doubling-plus-eps}
There exists $\eps_*>0$ such that for every $\lambda>0$ and every radial nodal solution $u \in H$ of (\ref{eq:untransformed-problem whole space}) we have
 $$
E_*(u) = E_\lambda(u) > 2c_\infty + \eps_*,
$$
where $c_\infty$ is given in (\ref{eq:def-c-limits}).  
\end{proposition}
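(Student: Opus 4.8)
The plan is to reduce the assertion to a known energy-gap property for sign-changing solutions of the autonomous limit equation. First observe that if $u \in H$ is radial then $\del_\theta u = x_1 \del_{x_2} u - x_2 \del_{x_1} u \equiv 0$, so $\int_{\R^2}|\del_\theta u|^2\,dx = 0$ and hence $E_\lambda(u) = E_*(u)$ for every $\lambda>0$. Testing the weak formulation of \eqref{eq:untransformed-problem whole space:q=1} against arbitrary $\varphi \in C_c^\infty(\R^2)\subset H$ and using $\del_\theta u \equiv 0$ shows that $u$ is a weak solution — hence, by Lemma~\ref{L-infty regularity} and elliptic regularity, a classical solution — of the autonomous equation $-\Delta u + u = |u|^{p-2}u$ in $\R^2$, and it is sign-changing by hypothesis. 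By Lemma~\ref{energy-asymptotics}, $c_\infty = \hat c_\infty = E_*(w_\infty)$ is precisely the least energy among nontrivial solutions of this equation. Thus it suffices to show that the energies of \emph{all} sign-changing solutions of $-\Delta u + u = |u|^{p-2}u$ in $\R^2$ stay bounded away from $2c_\infty$ by a fixed amount $\eps_*>0$.

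This is exactly the kind of uniform doubling estimate established by Weth in \cite{Weth}: for autonomous superlinear problems of this form there is a constant $\eps_*>0$, depending only on $p$, such that every sign-changing solution $u \in H^1(\R^2)$ satisfies $E_*(u) \ge 2 c_\infty + \eps_*$. Combined with the reduction above, this yields $E_*(u) = E_\lambda(u) > 2c_\infty + \eps_*$ for every radial nodal solution $u$ and every $\lambda>0$, as claimed.

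For completeness I would also indicate how the gap can be recovered directly in the radial setting, which is lighter than the general argument of \cite{Weth} thanks to the compact embedding $H^1_{rad}(\R^2)\hookrightarrow L^p(\R^2)$. Writing $u = u^+ - u^-$ and testing the equation against $u^{\pm}$ gives $u^{\pm} \in \cN_*$, the Nehari manifold associated with $E_*$, whence $E_*(u) = E_*(u^+) + E_*(u^-) \ge 2c_\infty$; equality in $E_*(u^{\pm})\ge c_\infty$ is impossible since $u^{\pm}$ vanishes on a circle and is therefore not a whole-space solution, which already yields the strict inequality $E_*(u) > 2c_\infty$. If the \emph{uniform} gap failed, there would be radial nodal solutions $u_n$ with $E_*(u_n) \to 2c_\infty$, hence $E_*(u_n^{\pm}) \to c_\infty$; then $(u_n^{\pm})_n$ would be bounded (radial) minimizing sequences for $E_*$ on $\cN_*$, and compactness together with the uniform convexity of $H^1(\R^2)$ would force $u_n^{\pm} \to w_\infty$ strongly in $H^1(\R^2)$ along a subsequence (the limit is nonnegative, radial, of least energy, hence equals $w_\infty$ by \cite{Kwong}); but then $u_n = u_n^+ - u_n^- \to 0$ in $H^1(\R^2)$, contradicting $E_*(u_n) \to 2c_\infty > 0$.

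The genuinely nontrivial point — and the reason the result of \cite{Weth} (or the compactness argument just sketched) is needed — is the passage from the easy strict inequality $E_*(u) > 2c_\infty$ to a gap that is \emph{uniform} over the (a priori infinite) family of radial nodal solutions, for which no uniqueness is available. The remaining ingredients — that $u$ solves the full autonomous equation on $\R^2$ and not merely its radial reduction, and the standard identities $\nabla u \cdot \nabla u^{\pm} = |\nabla u^{\pm}|^2$ and $|u|^{p-2}u\,u^{\pm} = (u^{\pm})^p$ a.e., ensuring $u^{\pm}\in\cN_*$ — are routine.
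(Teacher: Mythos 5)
Your proof is correct and follows essentially the same route as the paper: observe that $\del_\theta u\equiv 0$ for radial $u$, so $E_\lambda(u)=E_*(u)$ and $u$ solves the autonomous limit problem \eqref{eq:unique-rad-pos-solution-limit-problem}, then invoke the uniform energy-doubling estimate of \cite[Theorem 1.5]{Weth} together with the identification $c_\infty=\hat c_\infty=E_*(w_\infty)$ from Lemma~\ref{energy-asymptotics}. Your supplementary compactness argument in the radial class (via the compact embedding $H^1_{rad}(\R^2)\hookrightarrow L^p(\R^2)$ and Kwong's uniqueness) is a sound self-contained substitute for that citation, but it is not needed and does not appear in the paper.
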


\begin{proof}
First observe that $E_*(u) = E_\lambda(u)$ for every radial function $u \in H$. Moreover, if $u$ is a radial nodal solution of (\ref{eq:untransformed-problem whole space}), then $u$ also solves the limit problem (\ref{eq:unique-rad-pos-solution-limit-problem}). By \cite[Theorem 1.5]{Weth}, and the variational characterization of $c_\infty$ given by \eqref{eq:def-limit ground state} and \eqref{eq:limit-c-infty}, there exists $\eps_*>0$ with the property that $E_*(u) > 2c_\infty + \eps_*$ for every nodal solution of (\ref{eq:unique-rad-pos-solution-limit-problem}). This proves the claim.   
\end{proof}

\begin{proof}[Proof of Theorem~\ref{main theorem}(ii) (completed)]
  Let $\eps_*>0$ be given by Proposition~\ref{energy-doubling-plus-eps}. By \eqref{eq:def-c-limits}, there exists $\Lambda_0>0$ with the property that
  $$
  c_{\lambda}< c_\infty + \frac{\eps_*}{2} \qquad \text{for every $\lambda> \Lambda_0$.}
  $$
Consequently, for $\lambda>\Lambda_0$, problem (\ref{eq:untransformed-problem}) admits a nontrivial solution $u \in H^+$ with $E_\lambda^+(u)< c_\infty + \frac{\eps_*}{2}$. By odd reflection, we may extend $u$ to a nodal solution of (\ref{eq:untransformed-problem whole space}) with $E_\lambda(u) < 2c_\infty + \eps_*$. Proposition \ref{energy-doubling-plus-eps} therefore implies that the least energy nodal solutions of (\ref{eq:untransformed-problem whole space}) cannot be radial. 
\end{proof}

\medskip
Next, we complete the proof of Theorem~\ref{non-radiality-L-infty-est-intro}, which we restate here for the reader's convenience.

\begin{theorem} \label{non-radiality-L-infty-est}
  Let $p >2$.
  \begin{enumerate}
  \item If $u \in H$ is a nontrivial weak solution of
\begin{equation}
  \label{eq:equation-whole-space-section-radiality}
	-\Delta u - \frac{1}{\lambda^2} \del_\theta^2 u + u = |u|^{p-2} u \quad \text{in $\R^2$}
\end{equation}
for some $\lambda>0$ satisfying $\lambda < \Bigl(\frac{1}{(p-1)|u|_\infty^{p-2}}\Bigr)^{\frac{1}{2}}$, then $u$ is a radial function. 
  \item For every $c>0$, there exists $\lambda_c>0$ with the property that every weak solution $u \in H$ of (\ref{eq:equation-whole-space-section-radiality})
for some $\lambda \in (0,\lambda_c)$ with $E_\lambda(u) \le c$ is radial.
\end{enumerate}
\end{theorem}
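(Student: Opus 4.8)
The plan is to prove part (i) by splitting an arbitrary nontrivial weak solution $u$ into its radial average $u^\#$ and the oscillating remainder $w:=u-u^\#$, and showing that the bound on $\lambda$ forces $w\equiv 0$. First I would collect the structural facts about the averaging operator from Section~\ref{sec:funct-analyt-fram}: $u^\#\in H$ (the operator $u\mapsto u^\#$ is continuous on $H$), $w^\#\equiv 0$, $\del_\theta u^\#\equiv 0$ so that $\del_\theta w=\del_\theta u$, and the orthogonality relations $\langle u^\#,w\rangle_\lambda=0$ and $\int_{\R^2}h\,w\,dx=0$ for every radial $h\in L^2(\R^2)$. The first orthogonality follows by writing $\nabla u^\#=(u^\#)'(r)\,\hat r$ in polar coordinates, which gives $\int\nabla u\cdot\nabla u^\#\,dx=\int|\nabla u^\#|^2\,dx$, together with the self-adjointness of averaging on $L^2$; the second is immediate since $w$ has vanishing angular mean.

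Next I would test the weak formulation of \eqref{eq:equation-whole-space-section-radiality} against $w\in H$. On the left, $\langle u,w\rangle_\lambda=\langle u^\#+w,w\rangle_\lambda=\|w\|_\lambda^2$; on the right, since $|u^\#|^{p-2}u^\#$ is radial, $\int_{\R^2}|u|^{p-2}u\,w\,dx=\int_{\R^2}\bigl(|u|^{p-2}u-|u^\#|^{p-2}u^\#\bigr)w\,dx$. Since $|u^\#|\le|u|_\infty$ pointwise and $t\mapsto|t|^{p-2}t$ is Lipschitz with constant $(p-1)|u|_\infty^{p-2}$ on $[-|u|_\infty,|u|_\infty]$, the integrand is at most $(p-1)|u|_\infty^{p-2}w^2$, so
$$
|\nabla w|_2^2+\frac{1}{\lambda^2}|\del_\theta w|_2^2+|w|_2^2\le (p-1)|u|_\infty^{p-2}\,|w|_2^2 .
$$
Discarding the nonnegative terms $|\nabla w|_2^2+|w|_2^2$ on the left and applying the angular Poincar\'e inequality $|w|_2^2\le|\del_\theta w|_2^2$ of Lemma~\ref{lemma1}(i) (valid because $w^\#\equiv0$) yields $\frac{1}{\lambda^2}|\del_\theta w|_2^2\le (p-1)|u|_\infty^{p-2}|\del_\theta w|_2^2$. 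If $u$ were nonradial we would have $|\del_\theta w|_2>0$, forcing $\lambda^2\ge\bigl((p-1)|u|_\infty^{p-2}\bigr)^{-1}$, contrary to hypothesis; hence $\del_\theta w\equiv 0$, so $|w|_2=0$ and $u=u^\#$ is radial, proving (i).

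For part (ii) I would combine (i) with the uniform $L^\infty$-estimate of Lemma~\ref{L-infty regularity}. Testing \eqref{eq:equation-whole-space-section-radiality} against $u$ gives the Nehari identity $\|u\|_\lambda^2=|u|_p^p$, whence $E_\lambda(u)=\bigl(\frac12-\frac1p\bigr)\|u\|_\lambda^2$; thus $E_\lambda(u)\le c$ implies $\|u\|_{H^1(\R^2)}^2\le\|u\|_\lambda^2\le c/(\frac12-\frac1p)=:c'$. By Lemma~\ref{L-infty regularity}, $|u|_\infty\le C(c')^{\sigma/2}=:M$ with $C,\sigma$ depending only on $p$ — this is exactly the point where the $\lambda$-independence of the constants in Lemma~\ref{L-infty regularity} is used. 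Setting $\lambda_c:=\bigl((p-1)M^{p-2}\bigr)^{-1/2}$, any weak solution with $\lambda<\lambda_c$ satisfies $\lambda<\bigl((p-1)|u|_\infty^{p-2}\bigr)^{-1/2}$, so part (i) applies (the trivial solution being radial anyway) and $u$ is radial.

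I do not expect a genuine obstacle; the only delicate points are the bookkeeping for the averaging operator (admissibility of $u^\#$ and $w$ as test functions, the identity $\int\nabla u\cdot\nabla u^\#\,dx=\int|\nabla u^\#|^2\,dx$, and the two orthogonality relations) and the observation that the Lipschitz constant of the nonlinearity on $[-|u|_\infty,|u|_\infty]$ is precisely $(p-1)|u|_\infty^{p-2}$, which is what makes the threshold in the hypothesis the natural one for this energy-type argument.
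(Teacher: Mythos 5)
Your proof is correct, and part (ii) coincides with the paper's argument verbatim (Nehari identity, the $\lambda$-independent bound of Lemma~\ref{L-infty regularity}, then part (i)). Part (i) follows the same strategy as the paper --- decompose $u=u^\#+w$, test, and invoke the angular Poincar\'e inequality of Lemma~\ref{lemma1}(i) --- but treats the nonlinear term differently: the paper subtracts the averaged equation $-\Delta u^\#+u^\#=\bigl(|u|^{p-2}u\bigr)^\#$, tests the resulting equation for $w$ against $w$, and bounds $\bigl||u|^{p-2}u-(|u|^{p-2}u)^\#\bigr|_2$ by a \emph{second} application of Lemma~\ref{lemma1}(i) to the function $|u|^{p-2}u\in H$ (which needs the chain-rule observation of Remark~\ref{remark-l-infty-H-functions}); you instead test the original equation against $w$, use orthogonality to replace $(|u|^{p-2}u)^\#$ by the radial function $|u^\#|^{p-2}u^\#$, and conclude with the pointwise Lipschitz constant $(p-1)|u|_\infty^{p-2}$ of $t\mapsto|t|^{p-2}t$ on $[-|u|_\infty,|u|_\infty]$. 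Both routes yield exactly the same threshold; yours is marginally more elementary in that it applies the Poincar\'e inequality only once and does not need $|u|^{p-2}u\in H$, at the price of verifying the orthogonality relations $\langle u^\#,w\rangle_\lambda=0$ and $\int_{\R^2}h\,w\,dx=0$ for radial $h$, which do hold by the density/polar-coordinate computation you indicate.
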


\begin{proof}
(i) Let $u \in H$ be a nontrivial weak solution of (\ref{eq:equation-whole-space-section-radiality}) for some $\lambda>0$, and let, as before, $u^\#$ denote the radial average of $u$ as defined in (\ref{eq:radial-averaging}). It is easy to see that, for every $k \in \N$, the function $u^\# \in H$ is a weak solution of 
$$
-\Delta u^\#  + u^\# = \left(|u|^{p-2} u \right)^\# \quad \text{in $\R^2$}.
$$
Consequently we have, in weak sense, 
\begin{equation*}
-\Delta (u-u^\#) - \frac{1}{\lambda^2} \del_\theta^2 (u-u^\#)  + (u- u^\#) = |u|^{p-2} u - \left(|u|^{p-2} u \right)^\# \quad \text{in $\R^2$.}
\end{equation*}
Testing this equation against $u-u^\#$ yields
\begin{align} 
  &\frac{1}{\lambda^2} |\del_\theta u |_2^2= \frac{1}{\lambda^2} |\del_\theta (u-u^\#) |_2^2 \le  |\nabla(u-u^\#)|_2^2 + \frac{1}{\lambda^2} |\del_\theta (u-u^\#) |_2^2 + |u-u^\#|_2^2 \nonumber \\
  & = \int_{\R^2} \left( |u|^{p-2} u - \left(|u|^{p-2} u \right)^\# \right) (u-u^\#) \, dx  \leq   \left| |u|^{p-2} u - \left(|u|^{p-2} u \right)^\# \right|_2 \, |u-u^\#|_2  \nonumber \\
  &\leq   \left| |u|^{p-2} u - \left(|u|^{p-2} u \right)^\# \right|_2 \, |\del_\theta u|_2  \label{eq:comparH1uurad},
\end{align}
where we used Lemma~\ref{lemma1} in the last step. Moreover, $|u|^{p-2} u \in H$ by Remark~\ref{remark-l-infty-H-functions}, and therefore
\begin{equation}
  \left| |u|^{p-2} u - \left(|u|^{p-2} u \right)^\# \right|_2  \le \bigl|\del_\theta \bigl(|u|^{p-2} u\bigr)\bigr|_2 = (p-1) \bigl| |u|^{p-2} \del_\theta  u \bigr|_2 \le (p-1)|u|_\infty^{p-2}|\del_\theta  u |_2  \label{eq:comparH1uurad-1},
\end{equation}
again by Lemma~\ref{lemma1}. Combining (\ref{eq:comparH1uurad}) and (\ref{eq:comparH1uurad-1}), we obtain that
$$
\frac{1}{\lambda^2} |\del_\theta u |_2^2 \le (p-1)|u|_\infty^{p-2 } |\del_\theta  u |_2^2
$$
which implies that $\del_\theta u \equiv 0$ if $\lambda < \Bigl(\frac{1}{(p-1)|u|_\infty^{p-2}}\Bigr)^{\frac{1}{2}}$. The proof of (i) is thus finished.

(ii) Let $c>0$ be given, and let $u \in H$ be a nontrivial weak solution of (\ref{eq:equation-whole-space-section-radiality}) for some $\lambda>0$ with $E_\lambda (u) \le c$. Since $E_\lambda (u) =\Bigl(\frac{1}{2}-\frac{1}{p}\Bigr)\|u\|_\lambda^2$, it then follows that
  \begin{equation*}
\|u\|_{H^1(\R^2)}^2 \le \|u\|_\lambda^2  = \frac{2p}{p-2}E_\lambda (u) \leq  \frac{2p c}{p-2} 
\end{equation*}
and therefore
$$
|u|_\infty \le C \|u\|_{H^1(\R^2)}^\sigma \le C \Bigl(\frac{2p c}{p-2} \Bigr)^{\frac{\sigma}{2}} =:\mu_c
$$
by Lemma~\ref{L-infty regularity} with the constants $C,\sigma>0$ given there. Hence, if
$$
\lambda < \lambda_c:= \Bigl(\frac{1}{(p-1)\mu_c^{p-2}}\Bigr)^{\frac{1}{2}},
$$
then also $\lambda< \Bigl(\frac{1}{(p-1)|u|_\infty^{p-2}}\Bigr)^{\frac{1}{2}}$ and therefore $u$ is radial by (i). The proof is finished.
\end{proof}

Next we provide uniform energy estimates for least energy nodal solutions of (\ref{eq:equation-whole-space-section-radiality}).
            
\begin{lemma} \label{Upper lower energy bound}
Let $p>2$.  There exist constants $c,C>0$ with the property that
  \begin{equation}
    \label{eq:upper-lower-energy-bound-equation}
  c \leq E_\lambda(u) \leq C  
  \end{equation}
  for every $\lambda>0$ and every least energy nodal solution $u \in H$ of (\ref{eq:equation-whole-space-section-radiality}). 
\end{lemma}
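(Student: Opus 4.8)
The plan is to note that every least energy nodal solution $u$ of (\ref{eq:equation-whole-space-section-radiality}) satisfies $E_\lambda(u)=\beta_\lambda$ by definition, so it suffices to bound $\beta_\lambda$ from below and from above by constants independent of $\lambda$. The lower bound requires no new work: by the inequality $\beta_\lambda\ge 2\alpha$ recorded in Section~\ref{sec:funct-analyt-fram}, together with the fact that $\alpha=\alpha_\lambda$ is itself independent of $\lambda$ (any minimizer over the Nehari manifold $\cN_\lambda$ being positive, hence radial by Theorem~\ref{symmetry of positive solutions}), we may take $c:=2\alpha>0$.

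For the upper bound, the key point is to test $\beta_\lambda$ against a \emph{radial} sign-changing competitor, so that the $\lambda$-dependent angular term drops out entirely. I would fix, once and for all, a radial function $w\in C_c^\infty(\R^2)$ with $w^+\not\equiv 0$ and $w^-\not\equiv 0$ (for instance $w(x)=\phi(|x|)$ for a smooth, compactly supported $\phi$ changing sign). Then $w^+$ and $w^-$ are again radial functions in $H$, so $\del_\theta w^\pm\equiv 0$ and $\|w^\pm\|_\lambda=\|w^\pm\|_{H^1(\R^2)}$ for every $\lambda>0$. Projecting onto the Nehari manifold, the unique numbers $a,b>0$ with $aw^+\in\cN_\lambda$ and $bw^-\in\cN_\lambda$ are
$$
a=\Bigl(\frac{\|w^+\|_{H^1(\R^2)}^2}{|w^+|_p^p}\Bigr)^{\frac{1}{p-2}},\qquad b=\Bigl(\frac{\|w^-\|_{H^1(\R^2)}^2}{|w^-|_p^p}\Bigr)^{\frac{1}{p-2}},
$$
which are manifestly independent of $\lambda$. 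Hence $u_0:=aw^++bw^-\in\cM_\lambda$, and since $u_0$ is radial,
$$
\beta_\lambda\le E_\lambda(u_0)=E_\lambda(aw^+)+E_\lambda(bw^-)=E_*(aw^+)+E_*(bw^-)=:C,
$$
using $E_\lambda(v)=E_\lambda(v^+)+E_\lambda(v^-)$ and the identity $E_\lambda=E_*$ on radial functions. The right-hand side does not depend on $\lambda$, which is the desired uniform upper bound. An even cleaner choice is to take $w$ to be a least energy radial nodal solution of the limit problem (\ref{eq:unique-rad-pos-solution-limit-problem}); then $a=b=1$ and $C=E_*(w)$.

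I do not anticipate a genuine obstacle. The single conceptual point is the observation that radial competitors decouple the energy estimate from $\lambda$; after that, everything is the standard Nehari-manifold bookkeeping already used in Section~\ref{sec:funct-analyt-fram}. The only details requiring a line of care are that $w^\pm\in H$ with $\del_\theta w^\pm\equiv 0$ (immediate, since truncating a radial function yields a radial $H^1$ function, whose angular derivative vanishes pointwise), and that $u_0^+=aw^+$, $u_0^-=bw^-$ because $w^+$ and $w^-$ have essentially disjoint supports.
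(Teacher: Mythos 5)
Your proof is correct and follows essentially the same route as the paper: the lower bound comes from a $\lambda$-independent ground-state level, and the upper bound from a radial sign-changing competitor in $\cM_\lambda$, for which the angular term vanishes and the energy is $\lambda$-independent. Your ``cleaner choice'' at the end (taking $w$ to be a radial nodal solution of \eqref{eq:unique-rad-pos-solution-limit-problem}, so that $a=b=1$) is exactly the paper's argument.
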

    
\begin{proof}
  The lower bound is obtained by choosing $c = \hat c_\infty$ as defined in (\ref{eq:def-limit ground state}), since
  $$
  E_\lambda(u) = \sup_{t \ge 0} E_\lambda(tu) \ge \sup_{t \ge 0} E_*(tu) \ge
\hat c_\infty
$$
for every $\lambda>0$ and every nontrivial solution $u \in H$ of (\ref{eq:equation-whole-space-section-radiality}). 

\medskip
For the upper bound, we first remark that  the existence of radial nodal solutions of (\ref{eq:unique-rad-pos-solution-limit-problem}) is well known, see for instance Theorems 4 and 5 in \cite{Strauss}. Let $\hat{u} \in H^1(\R^2)$ be a fixed radial nodal solution of (\ref{eq:unique-rad-pos-solution-limit-problem}) and set $C= E_*(\hat{u})$. For every $\lambda>0$, the function $\hat{u} \in H$ is then also a nodal solution of (\ref{eq:equation-whole-space-section-radiality}), and therefore 
  $$
  E_\lambda(u) \le E_\lambda(\hat{u}) = E_*(\hat{u})= C
  $$
for every least energy nodal solution $u \in H$ of (\ref{eq:equation-whole-space-section-radiality}). 
\end{proof}
\medskip

{\em The proof of Theorem~\ref{main theorem} is now completed by deriving Part (i) of this theorem as follows.}
\vspace{-0.2cm}

Let $C>0$ be given by Lemma~\ref{Upper lower energy bound}, and let $u \in H$ be a least energy solution of (\ref{eq:equation-whole-space-section-radiality}) for some $\lambda>0$. Then we have $E_\lambda(u) \le
C$. Applying Theorem~\ref{non-radiality-L-infty-est} with $c=C$ and considering $\lambda_0:= \min \{\lambda_c, \Lambda_0\}$ with $\Lambda_0>0$ given as in Theorem~\ref{main theorem}(ii), we then deduce that $0< \lambda_0 \le \Lambda_0$, and $u$ is radial if $\lambda < \lambda_0$. The proof of Theorem~\ref{main theorem}(i) is thus finished.

\appendix

\section{}

We give the proof of Lemma \ref{L-infty regularity}, which we restate here for the reader's convenience.

\begin{lemma} \label{app: regularity}
	Let $\lambda>0$ and let $u \in H$ be a weak solution of 
	\begin{equation} \label{app: equation}
	-\Delta u - \frac{1}{\lambda^2} \del_\theta^2 u + u = |u|^{p-2} u \quad \text{in $\R^2$}.
	\end{equation}
	Then $u \in L^\infty(\R^2)$. Furthermore, there exist constants $C,\sigma>0$, depending on $p>2$ but not on $u$ and $\lambda$, such that
	\begin{equation} \label{Linfty bound}
	|u|_\infty \leq C \|u\|_{H^1(\R^2)}^\sigma .
	\end{equation}
\end{lemma}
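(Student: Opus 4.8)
The plan is to run a \emph{global} Moser iteration on $\R^2$, exploiting that the bilinear form $\langle\cdot,\cdot\rangle_\lambda$ dominates $\|\cdot\|_{H^1(\R^2)}^2$ \emph{uniformly} in $\lambda$, so that the parameter $\lambda$ drops out of every estimate. Throughout, recall that $u\in H\subset H^1(\R^2)$, hence by the two-dimensional Sobolev embedding $u\in L^q(\R^2)$ for every $q\in[2,\infty)$; in particular all integrals below are finite a priori and only the \emph{quantitative} control needs to be produced.

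\emph{Step 1: truncated test functions.} For $L\ge1$ and an exponent $\beta\ge1$, put $u_L:=\min\{|u|,L\}$, $w_L:=|u|\,u_L^{\beta-1}$, and test \eqref{app: equation} against $\varphi_L:=u\,u_L^{2(\beta-1)}$. Since $u\in H$ and $\varphi_L$ is bounded with bounded derivatives in $u$, one has $\varphi_L\in H\cap L^\infty(\R^2)$, so this is admissible. The key point is that $\del_\theta$ is a first-order derivation, so the chain rule applies to it exactly as to $\nabla$; a direct computation gives, pointwise a.e.,
$$\nabla u\cdot\nabla\varphi_L=\bigl(1+2(\beta-1)\mathbf{1}_{\{|u|<L\}}\bigr)u_L^{2(\beta-1)}|\nabla u|^2\ \ge\ \frac1{\beta^2}|\nabla w_L|^2\ \ge\ 0,$$
and likewise $(\del_\theta u)(\del_\theta\varphi_L)\ge0$. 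Hence the term $\tfrac1{\lambda^2}\int_{\R^2}(\del_\theta u)(\del_\theta\varphi_L)\,dx$ in the weak formulation is nonnegative and may simply be discarded; using $u\varphi_L=w_L^2$, $\beta\ge1$, and $w_L^2\le|u|^{2\beta}$, we arrive at the $\lambda$-free inequality
$$\|w_L\|_{H^1(\R^2)}^2\ \le\ \beta^2\int_{\R^2}|u|^{p-2}w_L^2\,dx\ \le\ \beta^2\,|u|_{2\beta+p-2}^{\,2\beta+p-2}.$$

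\emph{Step 2: iteration.} Fix $\kappa:=p\,(>\tfrac p2)$ and apply the Gagliardo--Nirenberg inequality $|v|_{2\kappa}^2\le C_\kappa\|v\|_{H^1(\R^2)}^2$ (with $C_\kappa$ depending only on $p$) to $v=w_L$; letting $L\to\infty$ by monotone convergence yields $|u|_{2\kappa\beta}^{\,2\beta}\le C_\kappa\beta^2\,|u|_{2\beta+p-2}^{\,2\beta+p-2}$. Set $\beta_0=1$, $q_0=p$, and recursively $q_{n+1}:=2\kappa\beta_n$ with $\beta_n:=\tfrac12(q_n-p+2)\ge1$, so that $q_{n+1}=\kappa q_n-\kappa(p-2)$; since $\kappa>p/2$, the fixed point of this affine recursion lies below $p=q_0$, whence $q_n\to\infty$ geometrically and $\beta_n\sim q_n/2\to\infty$. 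Writing $M_n:=|u|_{q_n}\in(0,\infty)$, the displayed inequality reads $M_{n+1}\le(C_\kappa\beta_n^2)^{1/(2\beta_n)}M_n^{\,1+(p-2)/(2\beta_n)}$. Because $\sum_n\beta_n^{-1}<\infty$ and $\sum_n\beta_n^{-1}\log\beta_n<\infty$, taking logarithms and iterating shows $\sup_nM_n<\infty$, more precisely $M_n\le C(p)\max\{1,M_0\}^{B(p)}$ with $B(p):=\prod_n\bigl(1+\tfrac{p-2}{2\beta_n}\bigr)<\infty$. Since $q_n\to\infty$ and $\sup_nM_n<\infty$, it follows that $u\in L^\infty(\R^2)$ with $|u|_\infty\le\limsup_nM_n\le C(p)\max\{1,|u|_p\}^{B(p)}$. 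For a nontrivial weak solution one has $|u|_p\ge c_p>0$ (from $\|u\|_\lambda^2=|u|_p^p$ together with the $\lambda$-independent bound $|u|_p\le C_p\|u\|_\lambda$), so $\max\{1,|u|_p\}\le c_p^{-1}|u|_p$; combined with the Sobolev embedding $H^1(\R^2)\hookrightarrow L^p(\R^2)$ this gives $|u|_\infty\le C\|u\|_{H^1(\R^2)}^\sigma$ with $\sigma:=B(p)$ and $C=C(p)$ depending only on $p$, hence neither on $u$ nor on $\lambda$. This is \eqref{Linfty bound}, and it also gives the assertion $u\in L^\infty(\R^2)$.

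\emph{Main obstacle.} The genuinely delicate points are (i) verifying the sign of $\tfrac1{\lambda^2}(\del_\theta u)(\del_\theta\varphi_L)$, so that this is the \emph{only} place where $\lambda$ could enter and it can be dropped — this uses nothing about the geometry of $\del_\theta$ beyond its being a derivation; and (ii) the constant bookkeeping in Step 2, ensuring $\prod_n(C_\kappa\beta_n^2)^{1/(2\beta_n)}$ converges and $\sigma=B(p)<\infty$, which relies on the freedom to pick $\kappa>p/2$ — legitimate precisely because in dimension two there is no critical Sobolev exponent. Everything else is routine.
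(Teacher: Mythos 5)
Your proof is correct and follows the same core strategy as the paper's: a global Moser iteration in which the only possible source of $\lambda$-dependence, namely the term $\lambda^{-2}\int_{\R^2}(\del_\theta u)(\del_\theta \varphi_L)\,dx$, is nonnegative for the chosen test functions and is simply discarded, while the absence of a critical Sobolev exponent in dimension two lets the iteration run with $\lambda$-free constants. The differences lie in the bookkeeping of the iteration. The paper first applies H\"older's inequality to split $\int w^{p+2s-2}\,dx \le |w|_{(p-2)r/(r-1)}^{p-2}\,|w|_{2rs}^{2s}$, which makes each step of the recursion \emph{linear} in the iterated norm; the bound $|u|_\infty\le C\|u\|_{H^1(\R^2)}^\sigma$ then falls out with an explicit $\sigma$ and no further input. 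You instead keep the superlinear recursion $M_{n+1}\le A_n M_n^{1+(p-2)/(2\beta_n)}$, which produces the factor $\max\{1,|u|_p\}^{B(p)}$ and forces you to invoke the Nehari identity $\|u\|_\lambda^2=|u|_p^p$ together with $|u|_p\le C_p\|u\|_\lambda$ to obtain the $\lambda$-uniform lower bound $|u|_p\ge c_p>0$ for nontrivial solutions; this is a legitimate extra step (and the estimate is trivial for $u\equiv 0$), but it is an input the paper's version does not need. Two small inaccuracies that do not affect the argument: $\varphi_L=u\,u_L^{2(\beta-1)}$ is \emph{not} bounded (it equals $L^{2(\beta-1)}u$ on $\{|u|>L\}$), but it is still an admissible test function because it is a globally Lipschitz function of $u$ vanishing at $0$ and hence lies in $H$, with $\int|u|^{p-1}|\varphi_L|\,dx<\infty$; and this function of $u$ is only piecewise $C^1$, whereas the paper deliberately builds $C^1$ truncations $h,g$ so that the chain rule for $\nabla$ and $\del_\theta$ applies without appeal to its Lipschitz version.
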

\begin{proof}
	The proof is based on Moser iteration, cf. Appendix B in \cite{Struwe} and the references therein. 
	
	We fix $L,s \geq 2$ and consider auxiliary functions $h, g \in C^1([0,\infty))$ defined by 
$$
	h(t) := s \int_0^t \min \{\tau^{s-1},L^{s-1}\}\,d\tau \qquad \text{and}\qquad g(t) := \int_0^t [h'(\tau)]^2 \, d\tau 
$$
We note that 
\begin{equation}
  \label{eq:additional-appendix}
h(t)= t^s\quad \text{for $t \le L$}\qquad \text{and}\qquad g(t) \leq t g'(t)= t(h'(t))^2 \quad \text{for $t \geq 0$,}  
\end{equation}
 since the function $t \mapsto h'(t) = s \min \{t^{s-1},L^{s-1}\}$ is nondecreasing.  We shall now show that $w := u^+ \in L^\infty(\R^2)$, and that $\|w\|_\infty$ is bounded by the r.h.s. of \eqref{Linfty bound}. Since we may replace $u$ with $-u$, the claim will then follow.
 
\medskip
We note that $w \in H$ and $\phi:= g(w) \in H$ with 
$$
\nabla w = \mathbf{1}_{\{u>0\}} \nabla u, \qquad \nabla \phi= g'(w) \nabla w , \qquad \del_\theta w  = \mathbf{1}_{\{u>0\}} \del_\theta u, \qquad \del_\theta \phi= g'(w) \del_\theta w .
$$

\medskip
This follows from the boundedness of $g'$ and the estimate $g(t) \leq s^2 t^{2s-1}$ for $t \ge 0$. Testing \eqref{app: equation} with $\phi$ gives
	$$
	\int_{\R^2} \Bigl(\nabla u \cdot \nabla \phi + \frac{1}{\lambda^2} (\del_\theta u \ \del_\theta \phi) + u \phi\Bigr)dx = \int_{\R^2} |u|^{p-2} u \phi  \, dx ,
	$$
from where we estimate,	
\begin{align}
\int_{\R^2} \Bigl(|\nabla h(w) |^2 +  \frac{1}{\lambda^2}(\del_\theta h(w))^2 + w g(w)\Bigr) dx &=\int_{\R^2} \Bigl(g'(w) \Bigl(|\nabla w |^2 +  \frac{1}{\lambda^2}(\del_\theta w)^2 \Bigr) + u g(w)\Bigr) dx \nonumber\\
&=  \int_{\R^2} |u|^{p-2} u g(w) \, dx \nonumber\\
& \leq \int_{\R^2} w^p  (h'(w))^2  \, dx.\label{appendix-first-inequality}
\end{align}

\medskip
Here we used (\ref{eq:additional-appendix}) in the last step. We now fix $r >1$ with $\frac{(p-2)r}{r-1} \geq 2$ and $q>4r$. Combining (\ref{appendix-first-inequality}) with Sobolev embeddings, we obtain the inequality 
\begin{equation}
  \label{eq:appendix-second-inequality}
	\frac{1}{c_0} |h(w)|_q^2 - |h(w)|_2^2    + \int_{\R^2}  w g(w) \, dx \leq  \int_{\R^2} w^p  (h'(w))^2  \, dx 
\end{equation}
with a constant $c_0= c_0(q)>0$. Since 
$$
h(t)= t^s,\quad h'(t) = st^{s-1}\quad \text{and}\quad g(t)= s^2 \int_0^t \tau^{2s-2}\,d\tau = \frac{s^2}{2s-1} t^{2s-1}\qquad \text{for $t \le L$,}
$$
we may let $L \to \infty$ in (\ref{eq:appendix-second-inequality}) and apply Lebesgue's theorem to obtain 
$$
	\frac{1}{c_0}  |w^s|_q^2 +\Bigl(\frac{s^2}{2s-1}-1\Bigr)|w^s|_2^2 \leq  s^2 \int_{\R^2} w^{p+2s-2} \, dx \le s^2 |w|_{ \frac{(p-2)r}{r-1}}^{p-2}|w|_{2rs}^{2s}
$$
Since $s \ge 2$, we have $\frac{s^2}{2s-1}\ge 1$, and we thus obtain the inequality 
\begin{equation}
  \label{eq:appendix-third-inequality}
|w|_{sq} \le (c_1s)^{\frac{1}{s}}  |w|_{2rs}\qquad \text{with $c_1:= \Bigl(c_0 
|w|_{ \frac{r(p-2)}{r-1}}^{p-2}\Bigr)^{\frac{1}{2}}$.} 
\end{equation}
Next we note that the choice of $r$ and $q$ only depends on $p$ but not on $s\ge 2$. We may therefore consider $s=s_n= \rho^n$ for $n \in \N$ with $\rho := \frac{q}{2r} >2$, so that 
	$$
	2s_1 r = q \qquad \text{and}\qquad 2s_{n+1} r = q s_n  \quad \text{for $n \in \N$.}
	$$
 Iteration of (\ref{eq:appendix-third-inequality}) then gives 
	$$
	|w|_{\rho^n q} = |w|_{s_n q} \leq |w|_q \prod_{j=1}^n (c_1 \rho^j)^{\rho^{-j}}  \leq c_1^{\frac{\rho}{\rho-1}} c_2 |w|_q  \quad \text{for all $n$}\quad \text{with}\; c_2 : =  \rho^{\sum_{j=1}^\infty j \rho^{-j}} < \infty.
	$$
	It follows that
	\begin{equation} \label{p-norm limit}
	|w|_\infty = \lim_{n \to \infty} |w|_{ \rho^n q} \leq c_1^{\frac{\rho}{\rho-1}} c_2 |w|_q . 
	\end{equation}
	Moreover, by Sobolev embeddings, we have 
	$$
	c_1 \leq c_1' \|w\|_{H^1(\R^2)}^{\frac{p-2}{2}} \le c_1' \|u\|_{H^1(\R^2)}^{\frac{p-2}{2}}\qquad \text{and}\qquad  |w|_q\le \tilde c  \|w\|_{H^1} \le \tilde c  \|u\|_{H^1(\R^2)}  
	$$
        with constants $c_1', \tilde c>0$ depending only on $p,r$ and $q$. It thus follows from \eqref{p-norm limit} that
	$$
	|w|_\infty \leq C \|u\|_{H^1(\R^2)}^{\frac{(p-2)\rho}{2(\rho-1)} +1}\qquad \text{with}\qquad C:= c_2 (c_1')^{\frac{\rho}{\rho-1}}\tilde c. 
	$$
The proof is thus finished.	
\end{proof}

\begin{remark}
  \label{Appendix-remark}
{\rm  Let $\lambda>0$ and $p \in (2,\infty)$. By a variant of the Moser iteration argument given above, we can also show that every weak solution $u \in H^+$ of 
	\begin{equation} \label{app: equation-half-space}
	-\Delta u - \frac{1}{\lambda^2} \del_\theta^2 u = |u|^{p-2} u \quad \text{in $\R^2_+$,}\qquad u = 0 \quad \text{on $\partial \R^2_+$}
      \end{equation}
      satisfies $u \in L^\infty(\R^2_+)$. To see this, we replace, with the help of Corollary~\ref{H+ properties} and (\ref{app: equation-half-space}), the inequalities (\ref{appendix-first-inequality}) and (\ref{eq:appendix-second-inequality}) by 
      $$
      \frac{1}{c}|h(w)|^2_q \le \|h(w)\|_{\lambda,0}^2 = \int_{\R^2_+} |u|^{p-2}u g(w)\,dx \le \int_{\R^2_+} w^p \bigl(h'(w)\bigr)^2\,dx
      $$
with a constant $c>0$ depending on $q$ and $\lambda$. We can then complete the argument as above, noting that in this case the constants depend on $\lambda>0$.} 
\end{remark}

\end{document}